\theoremstyle{plain}
\newtheorem{thm}{Theorem}[section]
\newtheorem{prop}[thm]{Proposition}
\theoremstyle{definition}
\newtheorem{remark}[thm]{Remark}
\theoremstyle{example}
\theoremstyle{remark}
\numberwithin{equation}{section}
\def\cB{\mathcal{B}}
\def\cE{\mathcal{E}}
\def\cH{\mathcal{H}}
\def\cO{\mathcal{O}}
\def\cR{\mathcal{R}}
\def\cW{\mathcal{W}}
\def\CC{\mathbb{C}}
\def\QQ{\mathbb{Q}}
\def\ZZ{\mathbb{Z}}
\def\fg{\mathfrak{g}}
\def\fgl{\mathfrak{gl}}  
\def\fsl{\mathfrak{sl}}  
\def\fso{\mathfrak{so}}  
\def\fsp{\mathfrak{sp}}
\def\half{\hbox{$\frac12$}}
\def\<{\langle}
\def\>{\rangle}
\definecolor{dred}{rgb}{.65, 0, 0.15}
\definecolor{zajj}{rgb}{0.4, 0, .6}
\definecolor{gray}{rgb}{0.6, .6, .6}
\renewcommand{\@makefnmark}{\mbox{\textsuperscript{}}}
\title{Affine and degenerate affine BMW algebras:
The center}
\author{
Zajj Daugherty\\
Department of Mathematics, \\Statistics, and Computer Science\\
St.\ Olaf College\\
Northfield, Minnesota 55057 USA\\
daugherz@stolaf.edu \\ 
\and
Arun Ram \\
Department of Mathematics and Statistics \\
University of Melbourne \\
Parkville VIC 3010 Australia \\
aram@unimelb.edu.au \\
\and
Rahbar Virk \\
Department of Mathematics\\
University of California, Davis\\
One Shields Ave\\
Davis, CA 95616\\
virk@math.ucdavis.edu\\
}
\date{}
\newcommand{\comment}[1]{}
\begin{document}

\maketitle

\begin{abstract} 
The degenerate affine and affine BMW algebras arise naturally in the 
context of Schur-Weyl duality for orthogonal and symplectic Lie algebras and quantum 
groups, respectively.  Cyclotomic BMW algebras, affine Hecke algebras, cyclotomic Hecke algebras, and their degenerate versions are quotients. In this paper the theory is unified by 
treating the orthogonal and symplectic cases simultaneously; we make an exact parallel
between the degenerate affine and affine cases  via a new algebra which takes the role of the affine braid group for the degenerate setting.   A main result of this paper is an identification of the centers of the affine and degenerate affine BMW algebras in terms of rings of symmetric functions which satisfy a ``cancellation property'' or ``wheel condition'' (in the degenerate case, a reformulation of a result of Nazarov). Miraculously, these same rings also arise in Schubert calculus, as the cohomology and K-theory of isotropic Grassmanians and symplectic loop Grassmanians. We also establish new intertwiner-like identities which, when projected to the center, produce the recursions for central elements given previously by Nazarov for degenerate affine BMW algebras, and by Beliakova-Blanchet for affine BMW algebras. 

\smallskip

\noindent \emph{AMS 2010 subject classifications:} 17B37 (17B10 20C08)

\end{abstract}
\setcounter{tocdepth}{2}
\tableofcontents

%%%%%%%%%%%%%%%%%%%%%%
%%%% SEC: INTRODUCTION %%% %%%
%%%%%%%%%%%%%%%%%%%%%%
\section{Introduction}

The degenerate affine BMW algebras $\cW_k$ and the affine BMW algebras
$W_k$ arise naturally in the context of Schur-Weyl duality and the application
of Schur functors to modules in category $\cO$ for orthogonal and symplectic
Lie algebras and quantum groups (using the Schur functors
of \cite{Ze}, \cite{AS}, and \cite{OR}).   The degenerate algebras $\cW_k$
were introduced in \cite{Naz} and the affine versions $W_k$ appeared in \cite{OR},
following foundational work of \cite{Ha1}-\cite{Ha3}.  The representation theory of
$\cW_k$ and $W_k$ contains the representation theory of any quotient: in particular, 
the degenerate cyclotomic BMW algebras $\cW_{r,k}$, the cyclotomic BMW algebras $W_{r,k}$, the degenerate affine Hecke algebras $\cH_k$, the affine Hecke algebras $H_k$, the degenerate cyclotomic Hecke algebras $\cH_{r,k}$, and the cyclotomic Hecke algebras $H_{r,k}$ as quotients. The representation theory of the affine BMW algebras is an image of the representation theory of category $\cO$ for orthogonal and symplectic Lie algebras and their
quantum groups in the same way that the affine Hecke algebras arise in Schur-Weyl duality
with the enveloping algebra of $\fgl_n$ and its Drinfeld-Jimbo quantum group. 

In the literature, the algebras $\cW_k$ and $W_k$ have often been treated separately.
One of the goals of this paper is to unify the theory.  To do this we have begun
by adjusting the definitions of the algebras carefully to make the presentations
match, relation by relation.  In the same way that the affine BMW algebra is 
a quotient of the group algebra of the affine braid group, we have defined a
new algebra, the degenerate affine braid algebra which has the degenerate
affine BMW algebra and the degenerate affine Hecke algebras as quotients.
We have done this carefully, to ensure that 
the Schur-Weyl duality framework is completely analogous for both the degenerate affine 
and the affine cases.  We have also
added a parameter $\epsilon$ (which takes values $\pm1$) so that both the orthogonal
and symplectic cases can be treated simultaneously.  Our new presentations of the algebras
$\cW_k$ and $W_k$ are given in section \ref{sec:defns}.

In section \ref{sec:identities} we consider some remarkable recursions for generating central elements in 
the algebras $\cW_k$ and $W_k$.  These recursions were given by Nazarov \cite{Naz}
in the degenerate case, and then extended to the affine BMW algebra by Beliakova-Blanchet \cite{BB}.
Another proof in the affine cyclotomic case appears in \cite[Lemma 4.21]{RX2} 
and, in the degenerate case, in \cite[Lemma 4.15]{AMR}.
In all of these proofs, the recursion is obtained by a rather mysterious and tedious computation.
We show that there is an ``intertwiner'' like identity in the full algebra which, when
``projected to the center" produces the Nazarov recursions.  Our approach dramatically 
simplifies the proof and provides insight into where these recursions are coming from. 
Moreover, the proof is exactly analogous
in both the degenerate and the affine cases, and includes the parameter $\epsilon$, so that both the
orthogonal and symplectic cases are treated simultaneously.

In section \ref{sec:center} we identify the center of the degenerate and affine BMW algebras.  In the degenerate
case this has been done in \cite{Naz}.  Nazarov stated that the center of the degenerate affine 
BMW algebra is the subring of the ring of symmetric functions generated by the odd power 
sums.   We identify the ring in a different way, as the subring of symmetric functions with the Q-cancellation property, in the language of Pragacz \cite{Pr}.  This is a fascinating ring.
Pragacz identifies it as the cohomology ring of orthogonal and symplectic Grassmannians;
the same ring appears again as the cohomology of the loop Grassmannian for the symplectic 
group in \cite{LSS, La}; and references for the relationship of this ring to the projective representation theory of the symmetric group, the BKP hierarchy of differential equations, representations of Lie superalgebras, and twisted Gelfand pairs are found in 
\cite[Ch.\ II \S 8]{Mac}.  
For the affine BMW algebra, the Q-cancellation property can be generalized well to provide a suitable description of the center.  From our perspective, one would expect that the
ring which appears as the center of the affine BMW algebra
should also appear as the K-theory of the orthogonal and symplectic Grassmannians and
as the K-theory of the loop Grassmannian for the symplectic group, but we are not aware
that these identifications have yet been made in the literature.

This paper is part of a more comprehensive work on affine and degenerate affine BMW 
algebras.  In future work [DRV] we may:
\begin{enumerate}
\item[(a)] set up the commuting actions between the algebras $\cW_k$ and $W_k$ and the enveloping algebras of orthogonal and symplectic Lie algebras and their quantum groups, 
\item[(b)] show how the central elements which arise in the Nazarov recursions coincide with
central elements studied in Baumann \cite{Bau},
\item[(c)] provide a new approach to admissibility conditions by providing ``universal admissible parameters'' in an appropriate ground ring (arising naturally, from Schur-Weyl duality, as the center of the enveloping algebra, or quantum group),
\item[(d)] classify and construct the irreducible representations of $\cW_k$ and $W_k$ by multisegments, and
\item[(e)] define Khovanov-Lauda-Rouquier analogues of the affine BMW algebras.
\end{enumerate}
Many parts of this program are already available in the works of Goodman, Rui, Wilcox-Yu, and others (see, for example, \cite{RS1}-\cite{RS2}, \cite{RX1}-\cite{RX2}, \cite{Go1}-\cite{Go3}, \cite{GH1}-\cite{GH3}, \cite{WY1}-\cite{WY2}, \cite{Yu}).   Some parts of our 
work are also available at \cite{Ra}.

\smallskip\noindent
\textbf{Acknowledgements:}  Significant work on this paper was done while the authors 
were in residence at the Mathematical Sciences Research Institute (MSRI) in 2008, and the 
writing was completed when A. Ram was in residence at the Hausdorff Insitute for Mathematics (HIM) in 2011.  We thank MSRI and HIM for hospitality, support,
and a wonderful working environment during these stays.  This research has 
been partially supported by the National Science Foundation (DMS-0353038) and the 
Australian Research Council (DP-0986774).  We thank S. Fomin for providing the reference \cite{Pr} and Fred Goodman for providing the reference \cite{BB}, many informative discussions, detailed proofreading, and for much help in processing the theory around admissibility conditions.
We thank J. Enyang for his helpful comments on the manuscript.

%%%%%%%%%%%%%%%%%%%%%%%
%%%%  SEC: TANTALIZERS %%%%%%%%
%%%%%%%%%%%%%%%%%%%%%%%
\section{Affine and degenerate affine BMW algebras}
\label{sec:defns}

In this section, we define the affine Birman-Murakami-Wenzl (BMW) algebra $W_k$ and its degenerate version $\cW_k$.  We have adjusted the definitions to unify the theory. In particular,
in section \ref{s:degenaffbdgp}, we define a new algebra, the degenerate affine braid 
algebra $\cB_k$, which has the degenerate affine BMW algebras $\cW_k$ and the degenerate affine Hecke algebras $\cH_k$  as quotients. The motivation for the definition of $\cB_k$ is that 
the affine BMW algebras $W_k$ and the affine Hecke algebras $H_k$ are quotients of the group algebra of affine braid group $CB_k$. 

The definition of the degenerate affine braid algebra $\cB_k$ also makes the Schur-Weyl duality framework completely analogous in both the affine and degenerate affine cases. Both $\cB_k$ and $CB_k$ are designed to act on tensor space of the form $M \otimes V^{\otimes k}$.  In the degenerate affine case this is an action commuting with a complex semisimple Lie algebra $\fg$,
and in the affine case this is an action commuting with the Drinfeld-Jimbo quantum group $U_q\fg$.  The degenerate affine and affine BMW algebras arise when $\fg$ is $\fso_n$ or $\fsp_n$
and $V$ is the first fundamental representation and the 
degenerate affine and affine Hecke algebras arise when $\fg$ is $\fgl_n$ or $f\sl_n$ and 
$V$ is the first fundamental representation. In the case when $M$ is the trivial representation and
$\fg$ is $\fso_n$, the ``Jucys-Murphy'' elements $y_1, \dots, y_k$ in $\cB_k$ become the
``Jucys-Murphy'' elements for the Brauer algebras used in \cite{Naz} and, in the case that 
$\fg = \fgl_n$, these become the classical Jucys-Murphy elements in the group algebra of the symmetric group.
%elements $\kappa_i$ which act by the Casimir on individual factors, elements $\gamma_{i,j}$ which act by classical $\cR$-matrices on pairs of factors, and elements of the symmetric group which permute factors of $V$.  
The Schur-Weyl duality actions are explained in \cite{DRV} and \cite{Ra}.

%%%%%%%%%%%%%%%%%%%%%%%
%%%%  SUBSEC: Deg Braid %%%%%%%%
%%%%%%%%%%%%%%%%%%%%%%%

\subsection{The degenerate affine braid algebra $\cB_k$}\label{s:degenaffbdgp}
Let $C$ be a commutative ring, and let $S_k$ denote the symmetric group on $\{1,\ldots, k\}$. For $i\in\{1,\ldots, k\}$, write $s_i$ for the transposition in $S_k$ that switches $i$ and $i+1$.
The \emph{degenerate affine braid algebra} is the algebra $\cB_k$ over $C$ generated by
\begin{equation}\label{gensA}
t_u\quad (u\in S_k),
\qquad
\kappa_0, \kappa_1,
\qquad\hbox{and}\qquad
y_1,\ldots, y_k,
\end{equation}
with relations
\begin{equation}\label{easyrels}
t_ut_v = t_{uv}, \qquad y_iy_j = y_jy_i,
\quad
\kappa_0\kappa_1=\kappa_1\kappa_0,
\quad
\kappa_0 y_i = y_i \kappa_0,
\quad
\kappa_1 y_i = y_i\kappa_1,
\end{equation}
\begin{equation}\label{kyrels}
\quad
\kappa_0 t_{s_i} = t_{s_i}\kappa_0,
\qquad 
\kappa_1t_{s_1}\kappa_1t_{s_1}
=t_{s_1}\kappa_1t_{s_1}\kappa_1,
\qquad\hbox{and}\qquad
\kappa_1t_{s_j} = t_{s_j}\kappa_1,\ \hbox{for $j\ne 1$,}
\end{equation}
\begin{equation}\label{rel:graded_braid3}
%rel:graded_braid3
t_{s_i}(y_i + y_{i+1}) = (y_i + y_{i+1})t_{s_i}, 
\quad\hbox{and}\qquad
y_jt_{s_i} = t_{s_i} y_j, \quad \hbox{for  $j \neq i, i+1$,}
\end{equation}
and
\begin{equation}
\label{rel:graded_braid5}
%rel:graded_braid5
t_{s_i} t_{s_{i+1}}\gamma_{i,i+1}t_{s_{i+1}} t_{s_i} 
= \gamma_{i+1,i+2},
\qquad\hbox{where}\quad
\gamma_{i,i+1} =  y_{i+1}-t_{s_i}y_it_{s_i}   \text{ for  $i=1,\ldots, k-2$.}
\end{equation}

In the degenerate affine braid algebra $\cB_k$ let $c_0 = \kappa_0$ and
\begin{equation}\label{kappadefn}
c_j= \kappa_0+ 2(y_1+\ldots+y_j),
\qquad\hbox{so that}\quad
y_j = \hbox{$\frac{1}{2}$}(c_j - c_{j-1}),
\quad \hbox{for } j=1, \dots, k.
\end{equation}
Then $c_0,\ldots, c_k$ commute with each other, commute with $\kappa_1$, and the relations \eqref{rel:graded_braid3} are equivalent to 
\begin{equation}\label{kapparelns}
	t_{s_i}c_j = c_j t_{s_i},
	\qquad\hbox{for $j\ne i$.}
\end{equation}

\begin{thm}
The degenerate affine braid algebra $\cB_k$ has another presentation by generators
\begin{equation}\label{gensB}
t_u,\ \hbox{for $u\in S_k$,} \qquad
\kappa_0,\ldots, \kappa_k
\qquad\hbox{and}\qquad
\gamma_{i,j},\ \ \hbox{for $0\le i,j\le k$ with $i\ne j$,}
\end{equation}
and relations
\begin{equation}\label{tildeBrels1}
t_ut_v = t_{uv}, \qquad 
t_w \kappa_i t_{w^{-1}} = \kappa_{w(i)},
\qquad
t_w \gamma_{i,j} t_{w^{-1}} = \gamma_{w(i), w(j)},
\end{equation}
\begin{equation}\label{rel:graded_braid4}
\kappa_i\kappa_j = \kappa_j\kappa_i,
\qquad \kappa_i \gamma_{\ell,m} = \gamma_{\ell,m}\kappa_i,
\end{equation}
\begin{equation}\label{rel:graded_braid6}
\gamma_{i,j}=\gamma_{j,i},\qquad
\gamma_{p,r}\gamma_{\ell,m} = \gamma_{\ell,m}\gamma_{p,r},
\qquad\hbox{and}\qquad
\gamma_{i,j}(\gamma_{i,r}+\gamma_{j,r}) = (\gamma_{i,r}+\gamma_{j,r})\gamma_{i,j},
\end{equation}
for $p\ne \ell$ and $p\ne m$ and $r\ne \ell$ and $r\ne m$
and $i\ne j$, $i\ne r$ and $j\ne r$.
\end{thm}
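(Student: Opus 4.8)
The plan is to prove the two presentations define the same algebra by exhibiting mutually inverse homomorphisms between $\cB_k$ (with the generators \eqref{gensA} and relations \eqref{easyrels}--\eqref{rel:graded_braid5}) and the algebra $\widetilde{\cB}_k$ defined by the generators \eqref{gensB} and relations \eqref{tildeBrels1}--\eqref{rel:graded_braid6}. The guiding dictionary comes from the Schur--Weyl picture, in which $\kappa_i$ is a Casimir acting on the $i$th tensor factor, $\gamma_{i,j}$ is the two--body interaction on factors $i,j$, and the Jucys--Murphy element $y_j$ is the sum of the interactions of factor $j$ with all earlier factors. Concretely, inside $\cB_k$ I would set $\gamma_{0,1}=y_1$ and $\gamma_{i,i+1}=y_{i+1}-t_{s_i}y_it_{s_i}$ (as in \eqref{rel:graded_braid5}), extend to an arbitrary pair by $\gamma_{i,j}=t_w\gamma_{a,b}t_{w^{-1}}$ for any $w\in S_k$ carrying a base pair $(a,b)\in\{(0,1),(1,2)\}$ to $(i,j)$, and set $\kappa_i=t_{u_i}\kappa_1t_{u_i^{-1}}$ with $u_i(1)=i$; conversely I would send $y_j\mapsto\sum_{i=0}^{j-1}\gamma_{i,j}$, keeping $t_u,\kappa_0,\kappa_1$ fixed.

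First I would check these conjugate definitions are well posed, which is exactly where the ``commute with the stabilizer'' relations are used: $\kappa_1$ commutes with $t_{s_j}$ for $j\neq1$ by \eqref{kyrels}; $y_1$ commutes with $t_{s_j}$ for $j\ge2$ by \eqref{rel:graded_braid3}; and one checks directly that $y_2-t_{s_1}y_1t_{s_1}$ commutes with $t_{s_j}$ for $j\ge3$ and is invariant under conjugation by $t_{s_1}$ (using that \eqref{rel:graded_braid3} gives $t_{s_1}y_2t_{s_1}=y_1+\gamma_{1,2}$), so that the symmetry $\gamma_{1,2}=\gamma_{2,1}$ holds. Well-definedness immediately upgrades to full $S_k$-equivariance, $t_w\gamma_{i,j}t_{w^{-1}}=\gamma_{w(i),w(j)}$ and $t_w\kappa_it_{w^{-1}}=\kappa_{w(i)}$, which are the relations \eqref{tildeBrels1}. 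The homomorphism $\psi\colon\cB_k\to\widetilde{\cB}_k$ is then the easy direction: the relation \eqref{rel:graded_braid5} maps to an instance of the equivariance \eqref{tildeBrels1} with $w=s_is_{i+1}$ (after computing $\psi(y_{i+1}-t_{s_i}y_it_{s_i})=\gamma_{i,i+1}$), and \eqref{rel:graded_braid3} and \eqref{easyrels}, \eqref{kyrels} translate into statements that the sums $\sum_{i<j}\gamma_{i,j}$ are permuted correctly and that $\kappa_0$, $\kappa_1$ commute with the appropriate $\gamma$'s and $\kappa$'s.

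The harder direction is the homomorphism $\phi\colon\widetilde{\cB}_k\to\cB_k$, since the $\gamma$-relations of presentation B are not among the defining relations of $\cB_k$. The relation \eqref{rel:graded_braid4} is manageable: $\kappa_i\kappa_j=\kappa_j\kappa_i$ follows from the braid-type relation in \eqref{kyrels}, which literally asserts $\kappa_1(t_{s_1}\kappa_1t_{s_1})=(t_{s_1}\kappa_1t_{s_1})\kappa_1$, i.e.\ $\kappa_1\kappa_2=\kappa_2\kappa_1$, and the general case follows by equivariance. \emph{The main obstacle is the relation \eqref{rel:graded_braid6}}, in particular the three--index identity $\gamma_{i,j}(\gamma_{i,r}+\gamma_{j,r})=(\gamma_{i,r}+\gamma_{j,r})\gamma_{i,j}$, which must be derived in $\cB_k$ from \eqref{rel:graded_braid3} and \eqref{rel:graded_braid5} together with the commutativity of the $y$'s. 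My strategy here is to use equivariance to reduce each case to a single base triple of indices, rewrite the relevant $\gamma$'s back in terms of $y$'s and $t_{s_i}$'s, and then use the commutation of $c_0,\dots,c_k$ recorded after \eqref{kappadefn} (equivalently \eqref{kapparelns}), which packages precisely the cross-terms $\sum_{a<b}\gamma_{a,b}$ that appear; the disjoint-support cases $\gamma_{p,r}\gamma_{\ell,m}=\gamma_{\ell,m}\gamma_{p,r}$ reduce to commutativity of generators indexed by disjoint sets.

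Finally I would verify that $\phi$ and $\psi$ are mutually inverse on generators. Everything is immediate except $\phi\psi(y_j)=\sum_{i=0}^{j-1}\phi(\gamma_{i,j})$, which requires the key identity $y_j=\sum_{i=0}^{j-1}\gamma_{i,j}$ \emph{inside} $\cB_k$. This I would prove by induction on $j$: the base case is $y_1=\gamma_{0,1}$, and the inductive step uses $\gamma_{j,j+1}=y_{j+1}-t_{s_j}y_jt_{s_j}$ with the equivariance $t_{s_j}\gamma_{i,j}t_{s_j}=\gamma_{i,j+1}$ for $i<j$ to get $t_{s_j}y_jt_{s_j}=\sum_{i=0}^{j-1}\gamma_{i,j+1}$, whence $y_{j+1}=\sum_{i=0}^{j}\gamma_{i,j+1}$. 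Combined with the (automatic) checks $\psi\phi(\kappa_i)=\kappa_i$ and $\psi\phi(\gamma_{i,j})=\gamma_{i,j}$, which follow from the equivariance in $\widetilde{\cB}_k$ and the same decomposition identity, this shows $\phi$ and $\psi$ are inverse isomorphisms, proving the theorem.
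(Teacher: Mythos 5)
Your overall architecture --- mutually inverse dictionaries between the two presentations, well-definedness of the conjugate-defined $\kappa_i$ and $\gamma_{i,j}$ via their stabilizers, $S_k$-equivariance, and the telescoping identity $y_j=\sum_{\ell<j}\gamma_{\ell,j}$ proved by induction --- is the same as the paper's. One cosmetic difference: you set $\gamma_{0,1}=y_1$ and $y_j=\sum_{\ell<j}\gamma_{\ell,j}$, whereas the paper takes $\gamma_{0,1}=y_1-\frac12\kappa_1$ and $y_j=\frac12\kappa_j+\sum_{0\le \ell<j}\gamma_{\ell,j}$ as in \eqref{gensAtogensB}; the two dictionaries differ by the substitution $\gamma_{0,j}\mapsto\gamma_{0,j}+\frac12\kappa_j$, which preserves the relations \eqref{tildeBrels1}--\eqref{rel:graded_braid6}, so either would do for the bare statement, but yours is incompatible with \eqref{cingensB} and the Schur--Weyl normalization used later.

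The genuine gap is in the direction $\phi\colon\widetilde{\cB}_k\to\cB_k$: you never verify the second family of relations in \eqref{rel:graded_braid4}, namely $\kappa_i\gamma_{\ell,m}=\gamma_{\ell,m}\kappa_i$. You declare \eqref{rel:graded_braid4} ``manageable'' on the strength of $\kappa_i\kappa_j=\kappa_j\kappa_i$ alone and locate the main obstacle in \eqref{rel:graded_braid6}, but the overlapping case of \eqref{rel:graded_braid4} --- e.g.\ $[\kappa_1,\gamma_{1,2}]=[\kappa_1,\,y_2-t_{s_1}y_1t_{s_1}]=0$, equivalently $[\kappa_1,t_{s_1}y_1t_{s_1}]=0$ --- is the hardest relation in the whole theorem: the defining relations let $\kappa_1$ pass $\kappa_0$, every $y_i$, every $t_{s_j}$ with $j\ne 1$, and $t_{s_1}\kappa_1t_{s_1}$, and none of these moves $\kappa_1$ past $t_{s_1}y_1t_{s_1}$. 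The paper spends essentially all of its step (f) on this point, and your treatment of the three-index relation in \eqref{rel:graded_braid6} also needs it (to strip the $\kappa$-terms out of $[y_2,y_3]=0$, as in the paper's step (h)), so the gap propagates.

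Moreover, I do not believe this gap can be closed from \eqref{easyrels}--\eqref{rel:graded_braid5} as they stand, so you should treat it as more than a routine omission. Concretely, let $C=\CC[\epsilon]/(\epsilon^2)$ and let the generators \eqref{gensA} act on $(\CC^n)^{\otimes(k+1)}\otimes_\CC C$ (factors labelled $0,1,\ldots,k$) by: $t_w=$ the place permutation of factors $1,\ldots,k$; $y_j=\sum_{0\le\ell<j}P_{\ell j}$, where $P_{\ell j}$ is the transposition of factors $\ell$ and $j$; $\kappa_0=0$; and $\kappa_1=\epsilon P_{01}$. All of \eqref{easyrels}--\eqref{rel:graded_braid5} hold (the $y_j$ are Jucys--Murphy elements so they commute; $[P_{01},P_{0j}+P_{1j}]=0$ gives $\kappa_1y_j=y_j\kappa_1$; the one relation quadratic in $\kappa_1$, namely $\kappa_1t_{s_1}\kappa_1t_{s_1}=t_{s_1}\kappa_1t_{s_1}\kappa_1$, survives because $\epsilon^2=0$; and $\gamma_{i,i+1}=P_{i,i+1}$ so \eqref{rel:graded_braid5} is immediate), yet $[\kappa_1,\gamma_{1,2}]=\epsilon[P_{01},P_{12}]\ne0$. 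Since $[\kappa_1,\gamma_{1,2}]=0$ is forced by \eqref{rel:graded_braid4}, no derivation of it from \eqref{easyrels}--\eqref{rel:graded_braid5} can exist; some additional relation (e.g.\ $\kappa_1(t_{s_1}y_1t_{s_1})=(t_{s_1}y_1t_{s_1})\kappa_1$) must be added to \eqref{kyrels} for the theorem to go through. When you revisit this, also scrutinize the paper's own identity \eqref{twitht}: the evaluation $[y_1-\frac12\kappa_1,y_2-\frac12\kappa_2]=\frac14[\kappa_1,\kappa_2]$ there silently uses $[y_1,\kappa_2]=0$, which is equivalent to the relation being proved.
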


The commutation relations between the $\kappa_i$ and the
$\gamma_{i,j}$ can be rewritten in the form
\begin{equation}
[\kappa_r, \gamma_{\ell,m}]=0,
\qquad
[\gamma_{i,j}, \gamma_{\ell,m}] = 0,
\qquad\hbox{and}\qquad
[\gamma_{i,j}, \gamma_{i,m}] = [\gamma_{i,m}, \gamma_{j,m}],
\end{equation}
for all $r$ and all $i\ne \ell$ and $i\ne m$ and $j\ne \ell$ and $j\ne m$.

%\begin{remark}  It doesn't seem to help to write
%\eqref{rel:graded_braid5} purely in terms of $c_i$.  The commutativity of the 
%$y_i$ doesn't seem to follow from relations \eqref{rel:graded_braid4} and
%\eqref{rel:graded_braid6}.  Zajj agrees. Should??? we be putting relations like
%$$\kappa_i\kappa_j = \kappa_j\kappa_i,
%\quad\hbox{ for $0\le i,j\le k$.}
%$$
%In our examples we always have $\kappa_1=\kappa_2=\cdots =\kappa_k 
%= \langle \varepsilon_1, \varepsilon_1+2\rho\rangle =y$.
%\end{remark} 

\begin{proof}
The generators in \eqref{gensB} are written in terms of the generators in
\eqref{gensA}
by the formulas
\begin{equation}
\kappa_0=\kappa_0,
\qquad
\kappa_1=\kappa_1,
\qquad
t_w=t_w,
\end{equation}
\begin{equation}
\gamma_{0,1} = y_1 - \hbox{$\frac12$}\kappa_1,
\quad\hbox{and}\quad
\gamma_{j,j+1} = y_{j+1} - t_{s_j}y_jt_{s_j},
\ \ \hbox{for $j=1,\ldots, k-1$,}
\end{equation}
and 
\begin{equation}\label{ktdefs}
\kappa_m = t_u \kappa_1 t_{u^{-1}}, 
\qquad
\gamma_{0,m} = t_u \gamma_{0,1}t_{u^{-1}}
\qquad\hbox{and}\qquad
\gamma_{i,j} = t_v\gamma_{1,2}t_{v^{-1}},
\end{equation}
for $u,v\in S_k$ such that $u(1)=m$, $v(1)=i$ and 
$v(2) = j$.

The generators in \eqref{gensA} are written in terms of the generators in
\eqref{gensB} by the formulas
\begin{equation}\label{gensAtogensB}
\kappa_0=\kappa_0,
\qquad
\kappa_1=\kappa_1,
\qquad
t_w=t_w,
\qquad\hbox{and}\qquad
y_j=\hbox{$\frac12$}\kappa_j + \sum_{0\le \ell<j} \gamma_{\ell, j}.
\end{equation}

\smallskip\noindent
Let us show that relations in (2.2-5) follow from the relations in (2.9-2.11).
\begin{enumerate}
\item[(a)] The relation $t_ut_v= t_{uv}$ in \eqref{easyrels} is the first relation in \eqref{tildeBrels1}.
\item[(b)] The relation $y_iy_j=y_jy_i$ in \eqref{easyrels}:  Assume that $i<j$. Using the relations in 
\eqref{rel:graded_braid4} and \eqref{rel:graded_braid6},
\begin{align*}
[y_i,y_j]
&= \big[ \hbox{$\frac12$}\kappa_i + \sum_{\ell<i} \gamma_{\ell,i},
\hbox{$\frac12$}\kappa_j + \sum_{m<j} \gamma_{m,j}\big]
= \big[ \sum_{\ell<i} \gamma_{\ell,i},
\sum_{m<j} \gamma_{m,j}\big] \\
&= \sum_{\ell<i} \big[\gamma_{\ell,i},
\sum_{m<j} \gamma_{m,j}\big] 
= \sum_{\ell<i} \big[\gamma_{\ell,i},
(\gamma_{\ell,j}+\gamma_{i,j})+\sum_{m<j\atop m\ne \ell, m\ne i} \gamma_{m,j}\big] = 0.
\end{align*}
\item[(c)] The relation $\kappa_0\kappa_1 = \kappa_1\kappa_0$ in \eqref{easyrels} 
is part of the first relation in \eqref{rel:graded_braid4}, 
and the relations $\kappa_0y_i = y_i \kappa_0$ and $\kappa_1 y_i = y_i \kappa_1$ in \eqref{easyrels} follow from the relations
$\kappa_i\kappa_j=\kappa_j\kappa_i$ and $\kappa_i \gamma_{\ell,m} = \gamma_{\ell,m}\kappa_i$
in \eqref{rel:graded_braid4}.
\item[(d)] The relations $\kappa_0t_{s_i} = t_{s_i} \kappa_0$  and 
$\kappa_1t_{s_j} = t_{s_j}\kappa_1$ for $j\ne 1$ from \eqref{kyrels} follow from the relation
$t_w \kappa_i t_w^{-1} = \kappa_{w(i)}$ in \eqref{tildeBrels1}, and the relation
$\kappa_1t_{s_1}\kappa_1t_{s_1}=t_{s_1}\kappa_1t_{s_1}\kappa_2$ from \eqref{kyrels}
follows from $\kappa_1\kappa_2=\kappa_2\kappa_1$,
which is part of the first relation in \eqref{rel:graded_braid4}.
\item[(e)] The relations in \eqref{rel:graded_braid3} and \eqref{rel:graded_braid5} all 
follow from the relations
$t_w\kappa_i t_{w^{-1}} = \kappa_{w(i)}$ and $t_w \gamma_{i,j} t_{w^{-1}} = \gamma_{w(i), w(j)}$
in \eqref{tildeBrels1}.
\end{enumerate}
\noindent
To complete the proof let us show that the relations of (2.9-11) follow from the relations in (2.2-5).
\begin{enumerate}
\item[(a)] The relation $t_ut_v= t_{uv}$ in \eqref{tildeBrels1} is the first relation in \eqref{easyrels}.
\item[(b)]  The relations $t_w\kappa_i t_{w^{-1}}=\kappa_{w(i)}$ in \eqref{tildeBrels1} follow from 
the first and last relations in \eqref{kyrels} (and force the definition of $\kappa_m$ in \eqref{ktdefs}).
\item[(c)]  Since $\gamma_{0,1}=y_1-\hbox{$\frac12$}\kappa_1$, the relations
$t_w \gamma_{0,j} t_{w^{-1}} = \gamma_{0, w(j)}$ in \eqref{rel:graded_braid4} follow from the
last relation in each of \eqref{kyrels} and \eqref{rel:graded_braid3} (and force
the definition of $\gamma_{0,m}$ in \eqref{ktdefs}). 
\item[(d)] Since $\gamma_{1,2} = y_2-t_{s_1}y_1t_{s_1}$, the first relation in \eqref{rel:graded_braid3}
gives
\begin{equation}\label{stable}
t_{s_1}\gamma_{1,2}t_{s_1} - \gamma_{1,2} = (t_{s_1}y_2t_{s_1}-y_1)-y_2+t_{s_1}y_1t_{s_1}
=t_{s_1}(y_1+y_2)t_{s_1}-(y_1+y_2)=0.
\end{equation}
The relations
$t_w \gamma_{1,2} t_{w^{-1}} = \gamma_{w(1),w(2)}$ in \eqref{tildeBrels1} then follow from
\eqref{stable} and 
the last relation in \eqref{rel:graded_braid3} (and force the definitions $\gamma_{i,j}=t_v\gamma_{1,2}t_{v^{-1}}$
in \eqref{ktdefs}).
\item[(e)]  The third relation in \eqref{easyrels} is $\kappa_0\kappa_1=\kappa_1\kappa_0$
and the second relation in \eqref{kyrels} gives $\kappa_1\kappa_2=\kappa_2\kappa_1$.
The relations $\kappa_i\kappa_j=\kappa_j\kappa_i$ in \eqref{rel:graded_braid4} then
follow from the second set of relations in \eqref{tildeBrels1}.

\item[(f)]  The second relation in \eqref{kyrels} gives $[\kappa_1,\kappa_2]=0$.  Using this
and the relations in \eqref{easyrels},
\begin{equation}\label{kappasumt}
[\kappa_1, \gamma_{0,2}+\gamma_{1,2}] = [\kappa_1, (y_2-\hbox{$\frac12$}\kappa_2-\gamma_{1,2})+\gamma_{1,2}]
=[\kappa_1,\hbox{$\frac12$}\kappa_2]=0,
\end{equation}
and
\begin{equation}\label{twitht}
[\gamma_{0,1}, \gamma_{0,2}+\gamma_{1,2}]
=[y_1 - \hbox{$\frac12$}\kappa_1, y_2 - \hbox{$\frac12$}\kappa_2]
=\hbox{$\frac14$}[\kappa_1,\kappa_2]=0,
\end{equation}
so that
$$[\gamma_{0,1},\kappa_2] = [\gamma_{0,1}, 2y_2 - 2(\gamma_{0,2}+\gamma_{1,2})]
=[\gamma_{0,1}, 2y_2] = [y_1 - \hbox{$\frac12$}\kappa_1, 2y_2] = -[\kappa_1, y_2] = 0.$$
Conjugating the last relation by $t_{s_1}$ gives 
$$[\kappa_1, \gamma_{0,2}]=0,\qquad\hbox{and thus}\qquad[\kappa_1,\gamma_{1,2}]=0,$$
by \eqref{kappasumt}.
By the third and fourth relations in \eqref{easyrels},
$$[\kappa_0, \gamma_{0,1}] = [\kappa_0, y_1-\hbox{$\frac12$}\kappa_1]=0,
\qquad\hbox{and}\qquad
[\kappa_1, \gamma_{0,1}]=[\kappa_1, y_1-\hbox{$\frac12$}\kappa_1]=0.$$
By the relations in \eqref{kyrels} and \eqref{easyrels},
$$[\kappa_0, \gamma_{1,2}] = [\kappa_0, y_2 - t_{s_1}y_1t_{s_1}]=0
\qquad\hbox{and}\qquad
[\kappa_1, \gamma_{2,3}]=[\kappa_1, y_3-t_{s_2}y_2t_{s_2}]=0.$$
Putting these together with the (already established) relations in \eqref{tildeBrels1}
provides the second set of relations in \eqref{rel:graded_braid4}.

\item[(g)]  From the commutativity of the $y_i$ and the second relation in 
\eqref{rel:graded_braid3}
$$\gamma_{1,2}\gamma_{3,4}
= (y_2-t_{s_1}y_1t_{s_1})(y_4 - t_{s_3}y_3t_{s_3})
= (y_4 - t_{s_3}y_3t_{s_3})(y_2-t_{s_1}y_1t_{s_1})
= \gamma_{3,4}\gamma_{1,2}.
$$
By the last relation in \eqref{easyrels} and the last relation in \eqref{kyrels},
$$[\gamma_{0,1},\gamma_{2,3}] = [y_1-\hbox{$\frac12$}\kappa_1,y_3-t_{s_2}y_2t_{s_2}]=0.$$
Together with the  (already established) relations in \eqref{tildeBrels1}, we obtain the
first set of relations in \eqref{rel:graded_braid6}.
\item[(h)]  Conjugating \eqref{twitht} by $t_{s_2}t_{s_1}t_{s_2}$ gives $[\gamma_{0,2}, \gamma_{0,3}+\gamma_{2,3}]=0$, and this and the (already established) relations in \eqref{rel:graded_braid4} and
the first set of relations in \eqref{rel:graded_braid6} provide
\begin{align*}
0&=[y_2,y_3]=[\hbox{$\frac12$}\kappa_2+\gamma_{0,2}+\gamma_{1,2},
\hbox{$\frac12$}\kappa_3+\gamma_{0,3}+\gamma_{1,3}+\gamma_{2,3}] \\
&=[\gamma_{0,2}+\gamma_{1,2}, \gamma_{0,3}+\gamma_{1,3}+\gamma_{2,3}] 
=[\gamma_{1,2}, \gamma_{0,3}+\gamma_{1,3}+\gamma_{2,3}]
=[\gamma_{1,2},\gamma_{1,3}+\gamma_{2,3}].
\end{align*}
Note also that 
\begin{align*}
[\gamma_{1,2}, \gamma_{1,0}+\gamma_{2,0}]
&=[\gamma_{1,2}, \gamma_{0,1}+\gamma_{0,2}]
=-[\gamma_{0,1},\gamma_{1,2}]+[\gamma_{1,2},\gamma_{0,2}] \\
&=[\gamma_{0,1},\gamma_{0,2}]+[\gamma_{1,2},\gamma_{0,2}]
=t_{s_1}[\gamma_{0,2}+\gamma_{1,2}, \gamma_{0,1}]t_{s_1} = 0,
\end{align*}
by (two applications of) \eqref{twitht}.
The last set of relations in \eqref{rel:graded_braid6} now follow from the last set of relations in \eqref{tildeBrels1}.
\end{enumerate}
\end{proof}
%
%\begin{remark}  In this paper the elements $\gamma_{\ell,m}$ are essentially playing
%the role of the classical R-matrix which one finds in the second sentence of \cite{Dr1}.
%This indicates that this algebra should really be studied in the context of the current 
%algebra and the Yangian.   We have not yet properly pursued this line of thought
%for the purposes of the investigation of the degenerate affine BMW algebras that 
%are studied in this paper.
%\end{remark}
\smallskip\noindent
By the first formula in \eqref{kappadefn} and the last formula in \eqref{gensAtogensB},
\begin{equation}\label{cingensB}
c_j = \sum_{i=0}^j \kappa_i + 2\sum_{0\le \ell<m\le j} \gamma_{\ell,m}.
\end{equation}

%%%%%%%%%%%%%%%%%%%%%%%
%%%%  SUBSEC: Deg BMW %%%%%%%%
%%%%%%%%%%%%%%%%%%%%%%%

\subsection{The degenerate affine BMW algebra $\cW_k$}\label{s:degenaff}

Let $C$ be a commutative ring and let $\cB_k$ be the degenerate affine braid algebra
over $C$ as defined in Section \ref{s:degenaffbdgp}.
Define $e_i$ in the degenerate affine braid algebra by 
\begin{equation}\label{rel:e_defn} 
%rel:e_defn
t_{s_i}y_i = y_{i+1}t_{s_i} -(1-e_i),
\qquad \hbox{for $i=1,2,\ldots, k-1$,}
\end{equation}
so that, with $\gamma_{i,i+1}$ as in \eqref{rel:graded_braid5},
\begin{equation}\label{eq:tt-e_defn}\gamma_{i,i+1}t_{s_i} = 1 - e_i.\end{equation}

Fix constants
$$\epsilon=\pm1\qquad\hbox{and}\qquad
z_0^{(\ell)}\in C,\quad\hbox{for $\ell\in \ZZ_{\ge0}$.}
$$
The \emph{degenerate affine Birman-\!Wenzl-Murakami (BMW) algebra}  $\cW_k$ 
(with parameters $\epsilon$ and $z_0^{(\ell)}$) is the quotient of the degenerate affine 
braid algebra $\cB_k$ by the relations
\begin{equation}\label{rel:untwisting} 
%rel:untwisting
e_i t_{s_i} = t_{s_i} e_i = \epsilon e_i,
\qquad
e_i t_{s_{i-1}}e_i = e_i t_{s_{i+1}}e_i = \epsilon e_i,
\end{equation}
\begin{equation}\label{rel:unwrapping} 
%rel:unwrapping
e_1 y_1^{\ell} e_1 = z_0^{(\ell)} e_1,
\qquad
e_i(y_i + y_{i+1}) = 0 = (y_i + y_{i+1})e_i.
\end{equation}
  Conjugating \eqref{rel:e_defn} by $t_{s_i}$ and using the first relation in \eqref{rel:untwisting} gives
\begin{equation} \label{altedefn}
y_it_{s_i} = t_{s_i}y_{i+1} - (1-e_i).
\end{equation}
Then, by \eqref{eq:tt-e_defn} and \eqref{rel:graded_braid5},
\begin{equation}\label{skein}
%skein
\gamma_{i,i+1} = t_{s_i} - \epsilon e_i,
\qquad\hbox{and}\qquad
e_{i+1} = t_{s_i}t_{s_{i+1}}e_it_{s_{i+1}}t_{s_i}.
\end{equation}
Multiply the second relation in \eqref{skein} on the left and the right by $e_i$,
and then use the relations in \eqref{rel:untwisting} to get
$$
e_ie_{i+1}e_i
= e_it_{s_i}t_{s_{i+1}}e_it_{s_{i+1}}t_{s_i}e_i 
= e_it_{s_{i+1}}e_it_{s_{i+1}}e_i
=\epsilon~\! e_it_{s_{i+1}}e_i = e_i,
$$
so that
\begin{equation}\label{rel:skein}
e_ie_{i\pm1}e_i = e_i.
\qquad\qquad\hbox{Note that}\qquad
e_i^2 = z_0^{(0)} e_i
%\quad\hbox{and}\quad
%e_i\frac{1}{u-y_i}e_i = \frac{z_V(u)}{u} e_i
\end{equation}
is a special case of the first identity in \eqref{rel:unwrapping}.  
The relations
\begin{equation}\label{tangle1}
e_{i+1} e_i = e_{i+1}t_{s_i}t_{s_{i+1}}, \quad e_i e_{i+1} = t_{s_{i+1}}t_{s_i} e_{i+1},
\end{equation}
\begin{equation}\label{tangle2}
t_{s_i} e_{i+1} e_i = t_{s_{i+1}} e_i,  \quad \text{ and } \quad
e_{i+1} e_i t_{s_{i+1}} = e_{i+1} t_{s_i}
\end{equation}
result from
\begin{align*}
e_{i+1}t_{s_i}t_{s_{i+1}}
&=\epsilon~\! e_{i+1}t_{s_i}e_{i+1}t_{s_i}t_{s_{i+1}}
=e_{i+1}t_{s_{i+1}}t_{s_i}e_{i+1}t_{s_i}t_{s_{i+1}}=e_{i+1}e_i, \\
t_{s_{i+1}}t_{s_i}e_{i+1}
&= \epsilon~\!\! t_{s_{i+1}}t_{s_i}e_{i+1}t_{s_i}e_{i+1}
=t_{s_{i+1}}t_{s_i}e_{i+1}t_{s_i}t_{s_{i+1}}e_{i+1}
=e_ie_{i+1}, \\
t_{s_i}e_{i+1}e_i &=\epsilon~\!  t_{s_i}e_{i+1}t_{s_i}e_i=\epsilon~\! t_{s_{i+1}}e_it_{s_{i+1}}e_i=t_{s_{i+1}}e_i,
\quad\hbox{and}
\\
e_{i+1}e_it_{s_{i+1}}
&= \epsilon~\!e_{i+1}t_{s_{i+1}}e_it_{s_{i+1}}
=\epsilon~\!e_{i+1}t_{s_i}e_{i+1}t_{s_i}
=e_{i+1}t_{s_i}.
\end{align*}

%\begin{remark}
%Letting $t_i = \epsilon t_{s_i}$, then
%$$t_i^2 = 1, \quad e_i t_i = t_i e_i = e_i, \quad \text{ and } e_i t_{i \pm 1} e_i = e_i,$$
%so that the ordinary Brauer algebra as presented in, for example, \cite{Naz} is a subalgebra of $\cW_k$. 
%\end{remark}

\begin{remark}  A consequence (see \eqref{genfnadm}) of the defining relations of $\cW_k$ 
is the equation
\begin{equation*}
\left( z_0(-u) - \big( \hbox{$\frac12$} + \epsilon\,u \big) \right)
\left( z_0(u) - \big( \hbox{$\frac12$} - \epsilon\,u \big) \right) e_1
=
\big( \hbox{$\frac12$} - \epsilon\,u \big)\big( \hbox{$\frac12$} + \epsilon\,u \big) e_1,
\end{equation*}
where $z_0(u)$ is the generating function
$$z_0(u) = \sum_{\ell\in \ZZ_{\ge 0}} z_0^{(\ell)}u^{-\ell}.$$
This means that, unless the parameters $z_0^{(\ell)}$ are chosen carefully, it is likely
that $e_1=0$ in $\cW_k$.
\end{remark}

\begin{remark} 
From the point of view of the Schur-Weyl duality for the degenerate affine BMW algebra 
(see \cite{AS} and \cite{Ra}) the 
natural choice of base ring is the center of the enveloping algebra of the
orthogonal or symplectic Lie algebra which, by the Harish-Chandra isomorphism, is
isomorphic to the subring of symmetric functions given by
\begin{equation*}
\label{dBMW base ring}
C = \{ z\in \CC[h_1,\ldots, h_r]^{S_r}\ |\ z(h_1,\ldots, h_r) = z(-h_1,h_2,\ldots, h_r)\},
\end{equation*}
where the symmetric group $S_r$ acts by permuting the variables $h_1,\ldots, h_r$.
Here the constants $z_0^{(\ell)}\in C$ are given, explicitly, 
by setting the generating function 
$$
\hbox{$z_0(u)$
equal, up to a normalization, to}\quad
\prod_{i=1}^r \frac{ (u+\frac12+h_i)(u+\frac12-h_i)}{(u-\frac12 - h_i)(u-\frac12+h_i)}.
$$
This choice of $C$ and the $z_0^{(\ell)}$ are the \emph{universal admissible parameters} for
$\cW_k$.  This point of view will be explained in \cite{DRV}.
\end{remark}

\begin{remark} Careful manipulation of the defining relations of $\cW_k$ provides
an inductive presentation of $\cW_k$ as 
$$\cW_k = \cW_{k-1}e_{k-1}\cW_{k-1} 
+\cW_{k-1}t_{s_{k-1}}\cW_{k-1}
+\sum_{\ell\in \ZZ_{\ge0}} \cW_{k-1}y_k^\ell \cW_{k-1},
$$
and provides that 
$$
e_k\cW_ke_k = \cW_{k-1}e_k,
\qquad\hbox{and}\qquad
\begin{matrix}
\cW_k &\longrightarrow &\cW_{k-1}e_k \\
b &\longmapsto &e_kbe_k
\end{matrix}
$$
is a $(\cW_{k-1}, \cW_{k-1})$-bimodule homomorphism.
These structural facts are important to the understanding of $\cW_k$ by 
``Jones basic constructions''.
Under the conditions of Theorem \ref{thm:W_basis}(a) it is true,
but not immediate from the defining relations,
that the natural homomorphism $\cW_{k-1}\to \cW_k$ is injective so that 
$\cW_{k-1}$ is a subalgebra of $\cW_k$.  These useful structural results for
the algebras $\cW_k$ are justified in \cite{AMR}.
\end{remark}

\subsubsection{Quotients of $\cW_k$}

The \emph{degenerate affine Hecke algebra} $\cH_k$ is the quotient of $\cW_k$ by the relations
\begin{equation}\label{rel:gradedhecke}
e_i=0, 
\qquad\hbox{for $i=1,\ldots, k-1$.}
\end{equation}
Fix $b_1,\ldots, b_r\in C$.  The \emph{degenerate cyclotomic BMW algebra} 
$\cW_{r,k}(b_1,\ldots, b_r)$ 
is the degenerate affine BMW algebra with the additional relation 
\begin{equation}\label{cycrelation}
(y_1-b_1)\cdots (y_1-b_r) = 0.
\end{equation}
The \emph{degenerate cyclotomic Hecke algebra} $\cH_{r,k}(b_1,\ldots, b_r)$ is the
degenerate affine Hecke algebra $\cH_k$ with the additional relation
\eqref{cycrelation}.

\begin{remark}\label{polysubalg}
Since the composite map
$C[y_1, \ldots, y_k] \to \cB_k \to \cW_k\to \cH_k$ is injective (see \cite[Theorem 3.2.2]{Kl}) and the 
last two maps are surjections, it follows that the polynomial ring 
$C[y_1,\ldots, y_k] $ is a subalgebra of $\cB_k$ and $\cW_k$.
\end{remark}

\begin{remark}  A consequence of the relation \eqref{cycrelation} in $\cW_{r,k}(b_1,\ldots, b_r)$ 
is 
\begin{equation}
\left(z_0(u) + u-\hbox{$\frac12$}\right)e_1
= (u-\hbox{$\frac12$}(-1)^r)\left(\prod_{i=1}^r \frac{u+b_i}{u-b_i}\right)e_1.
\end{equation}
This equation makes the data of the values $b_i$ almost equivalent to the 
data of the $z_0^{(\ell)}$.
\end{remark}

%%%%%%%%%%%%%%%%%%%%%%%%
%%%%  SUBSEC: Affine Braid %%%%%%%%
%%%%%%%%%%%%%%%%%%%%%%%%

\subsection{The affine braid group $B_k$}\label{s:affbdgp}

The \emph{affine braid group} $B_k$ is the group given by
generators $T_1, T_2, \ldots, T_{k-1}$ and $X^{\varepsilon_1}$, 
with relations
\begin{align}
%affrel1	
T_i T_j &= T_j T_i, &\mbox{if } j \neq i \pm 1 \label{affrel1},\\
%affrel2	
T_i T_{i+1} T_i &= T_{i+1} T_i T_{i+1}, &\mbox{for } i=1, 2,\ldots, k-2,\label{affrel2}\\
%affrel3	
X^{\varepsilon_1}T_1 X^{\varepsilon_1} T_1 
&= T_1 X^{\varepsilon_1} T_1 X^{\varepsilon_1}, \label{affrel3}\\
%affrel4	
X^{\varepsilon_1}T_i &= T_i X^{\varepsilon_1},
&\mbox{for } i=2,3,\ldots, k-1. \label{affrel4}
\end{align}
The affine braid group is isomorphic to the group of braids in the thickened annulus,
where the generators $T_i$ and $X^{\varepsilon_1}$ are identified with the diagrams
\begin{equation}
T_i = 
\beginpicture
\setcoordinatesystem units <.5cm,.5cm>         % sets scale
\setplotarea x from -5 to 3.5, y from -2 to 2    % sets plot size up
\put{${}^i$} at 0 1.2      %
\put{${}^{i+1}$} at 1 1.2      %
\put{$\bullet$} at -3 .75      %
\put{$\bullet$} at -2 .75      %
\put{$\bullet$} at -1 .75      %
\put{$\bullet$} at  0 .75      %   Top dots
\put{$\bullet$} at  1 .75      %
\put{$\bullet$} at  2 .75      %
\put{$\bullet$} at  3 .75      %   
\put{$\bullet$} at -3 -.75          %
\put{$\bullet$} at -2 -.75          %
\put{$\bullet$} at -1 -.75          %
\put{$\bullet$} at  0 -.75          %  Bottom dots
\put{$\bullet$} at  1 -.75          %
\put{$\bullet$} at  2 -.75          %
\put{$\bullet$} at  3 -.75          %
%Flagpole
\plot -4.5 1.25 -4.5 -1.25 /
\plot -4.25 1.25 -4.25 -1.25 /
\ellipticalarc axes ratio 1:1 360 degrees from -4.5 1.25 center 
at -4.375 1.25
\put{$*$} at -4.375 1.25  
\ellipticalarc axes ratio 1:1 180 degrees from -4.5 -1.25 center 
at -4.375 -1.25 
% Vertical edges
\plot -3 .75  -3 -.75 /
\plot -2 .75  -2 -.75 /
\plot -1 .75  -1 -.75 /
%\plot  0 .75   0 -.75 /
%\plot  1 .75   1 -.75 /
\plot  2 .75   2 -.75 /
\plot  3 .75   3 -.75 /
\setquadratic
% single crossing
\plot  0 -.75  .05 -.45  .4 -0.1 /
\plot  .6 0.1  .95 0.45  1 .75 /
\plot 0 .75  .05 .45  .5 0  .95 -0.45  1 -.75 /
\endpicture
\qquad\hbox{and}\qquad
X^{\varepsilon_1} = 
~~\beginpicture
\setcoordinatesystem units <.5cm,.5cm>         % sets scale
\setplotarea x from -5 to 3.5, y from -2 to 2    % sets plot size up
\put{$\bullet$} at -3 0.75      %
\put{$\bullet$} at -2 0.75      %
\put{$\bullet$} at -1 0.75      %
\put{$\bullet$} at  0 0.75      %   Top dots
\put{$\bullet$} at  1 0.75      %
\put{$\bullet$} at  2 0.75      %
\put{$\bullet$} at  3 0.75      %   
\put{$\bullet$} at -3 -0.75          %
\put{$\bullet$} at -2 -0.75          %
\put{$\bullet$} at -1 -0.75          %
\put{$\bullet$} at  0 -0.75          %  Bottom dots
\put{$\bullet$} at  1 -0.75          %
\put{$\bullet$} at  2 -0.75          %
\put{$\bullet$} at  3 -0.75          %
%Flagpole
\plot -4.5 1.25 -4.5 -0.13 /
\plot -4.5 -0.37   -4.5 -1.25 /
\plot -4.25 1.25 -4.25  -0.13 /
\plot -4.25 -0.37 -4.25 -1.25 /
\ellipticalarc axes ratio 1:1 360 degrees from -4.5 1.25 center 
at -4.375 1.25
\put{$*$} at -4.375 1.25  
\ellipticalarc axes ratio 1:1 180 degrees from -4.5 -1.25 center 
at -4.375 -1.25 
% Vertical edges
%\plot -3 0.75  -3 -0.75 /
\plot -2 0.75  -2 -0.75 /
\plot -1 0.75  -1 -0.75 /
\plot  0 0.75   0 -0.75 /
\plot  1 0.75   1 -0.75 /
\plot  2 0.75   2 -0.75 /
\plot  3 0.75   3 -0.75 /
\setlinear
%curve around pole
\plot -3.3 0.25  -4.1 0.25 /
\ellipticalarc axes ratio 2:1 180 degrees from -4.65 0.25  center 
at -4.65 0 
\plot -4.65 -0.25  -3.3 -0.25 /
\setquadratic
%\plot  -3.8 0.25  -3.7 0.2 -3.5 0  -3.05 -0.45  -3 -0.75 /
\plot  -3.3 0.25  -3.05 .45  -3 0.75 /
\plot  -3.3 -0.25  -3.05 -0.45  -3 -0.75 /
\endpicture
.
\label{braidfig1}
\end{equation}
For $i=1,\ldots, k$ define
\begin{equation}\label{BraidMurphy}
X^{\varepsilon_i}=T_{i-1}T_{i-2}\cdots T_2T_1 
X^{\varepsilon_1}T_1T_2\cdots T_{i-2}T_{i-1} = 
~~\beginpicture
\setcoordinatesystem units <.5cm,.5cm>         % sets scale
\setplotarea x from -5 to 3.5, y from -2 to 2    % sets plot size up
\put{${}^i$} at 1 1.2 
\put{$\bullet$} at -3 0.75      %
\put{$\bullet$} at -2 0.75      %
\put{$\bullet$} at -1 0.75      %
\put{$\bullet$} at  0 0.75      %   Top dots
\put{$\bullet$} at  1 0.75      %
\put{$\bullet$} at  2 0.75      %
\put{$\bullet$} at  3 0.75      %   
\put{$\bullet$} at -3 -0.75          %
\put{$\bullet$} at -2 -0.75          %
\put{$\bullet$} at -1 -0.75          %
\put{$\bullet$} at  0 -0.75          %  Bottom dots
\put{$\bullet$} at  1 -0.75          %
\put{$\bullet$} at  2 -0.75          %
\put{$\bullet$} at  3 -0.75          %
%Flagpole
\plot -4.5 1.25 -4.5 -0.13 /
\plot -4.5 -0.37   -4.5 -1.25 /
\plot -4.25 1.25 -4.25  -0.13 /
\plot -4.25 -0.37 -4.25 -1.25 /
\ellipticalarc axes ratio 1:1 360 degrees from -4.5 1.25 center 
at -4.375 1.25
\put{$*$} at -4.375 1.25  
\ellipticalarc axes ratio 1:1 180 degrees from -4.5 -1.25 center 
at -4.375 -1.25 
% Vertical edges
\plot -3 0.75  -3 -0.1 /
\plot -2 0.75  -2 -0.1 /
\plot -1 0.75  -1 -0.1 /
\plot  0 0.75   0 -0.1 /
\plot -3 -.35  -3 -0.75 /
\plot -2 -.35   -2 -0.75 /
\plot -1 -.35   -1 -0.75 /
\plot  0 -.35    0 -0.75 /
\plot  2 0.75   2 -0.75 /
\plot  3 0.75   3 -0.75 /
\setlinear
%curve around pole
\plot -3.2 0.25  -4.1 0.25 /
\plot -2.8 0.25  -2.2 0.25 /
\plot -1.8 0.25  -1.2 0.25 /
\plot -.8 0.25  -.2 0.25 /
\plot  .2 0.25  .5 0.25 /
\plot -3.3 -.25  .5 -.25 /
\ellipticalarc axes ratio 2:1 180 degrees from -4.65 0.25  center 
at -4.65 0 
\plot -4.65 -0.25  -3.3 -0.25 /
\setquadratic
%\plot  -3.8 0.25  -3.7 0.2 -3.5 0  -3.05 -0.45  -3 -0.75 /
\plot  .5 0.25  .9 .45  1 0.75 /
\plot  .5  -0.25  .9 -0.45 1 -0.75 /
\endpicture.
\end{equation}
The pictorial computation
$$
X^{\varepsilon_j}X^{\varepsilon_i}=
\beginpicture
\setcoordinatesystem units <.5cm,.5cm>         % sets scale
\setplotarea x from -5.5 to 3.5, y from -2 to 2    % sets plot size up
\put{\scriptsize $j$} at 1 2
\put{\scriptsize $i$} at -1 2 
\put{$\bullet$} at -3 1.5      %
\put{$\bullet$} at -2 1.5      %
\put{$\bullet$} at -1 1.5      %
\put{$\bullet$} at  0 1.5      %   Top dots
\put{$\bullet$} at  1 1.5      %
\put{$\bullet$} at  2 1.5      %
\put{$\bullet$} at  3 1.5      %   
\put{$\bullet$} at -3 -1.6          %
\put{$\bullet$} at -2 -1.6          %
\put{$\bullet$} at -1 -1.6          %
\put{$\bullet$} at  0 -1.6          %  Bottom dots
\put{$\bullet$} at  1 -1.6          %
\put{$\bullet$} at  2 -1.6          %
\put{$\bullet$} at  3 -1.6          %
%Flagpole
\plot -4.5 2 -4.5 0.45 /
\plot -4.25 2 -4.25  0.45 /
\plot -4.5 0.15 -4.5 -0.87 /
\plot -4.25 0.15 -4.25  -0.87 /
\plot -4.5 -1.18   -4.5 -2 /
\plot -4.25 -1.18 -4.25 -2 /
\ellipticalarc axes ratio 1:1 360 degrees from -4.5 2 center 
at -4.375 2
\put{$*$} at -4.375 2  
\ellipticalarc axes ratio 1:1 180 degrees from -4.5 -2 center 
at -4.375 -2 
% Vertical edges
%partials
\plot -3 1.5  -3 0.5 /
\plot -2 1.5  -2 0.5 /
\plot -1 1.5  -1 0.5 /
\plot  0 1.5   0 0.5 /
\plot -3 .1  -3 -.85 /
\plot -2 .1  -2 -.85 /
\plot -1 .1  -1 0 /
\plot  0 .1   0 -1.5 /
\plot  1 0   1 -1.5 /
\plot -3 -1.15  -3 -1.5 /
\plot -2 -1.15   -2 -1.5 /
%\plot -1 -.35   -1 -0.75 /
%\plot  0 -.35    0 -0.75 /
%full
\plot  2 1.5   2 -1.5 /
\plot  3 1.5   3 -1.5 /
\setlinear
%top curve around pole
\plot -3.2 1  -4.1 1 /
\plot -2.8 1  -2.2 1 /
\plot -1.8 1  -1.2 1 /
\plot -.8 1  -.2 1 /
\plot  .2 1  .5 1 /
\plot -3.3 .3  .5 .3 /
\ellipticalarc axes ratio 2:1 180 degrees from -4.65 1  center 
at -4.65 .65 
\plot -4.65 0.3  -3.3 0.3 /
\setquadratic
\plot  .5 1  .9 1.2  1 1.5 /
\plot  .5  0.3  .9 .2 1 0 /
%bottom curve around pole
\setlinear
\plot -3.2 -.5  -4.1 -.5 /
\plot -2.8 -.5  -2.2 -.5 /
\plot -1.8 -.5  -1.5 -.5 /
\plot -3.3 -1  -1.5 -1 /
\ellipticalarc axes ratio 1:1 180 degrees from -4.65 -.5  center 
at -4.65 -.75 
\plot -4.65 -1  -3.3 -1 /
\setquadratic
\plot  -1.5 -.5  -1.1 -.3  -1 0 /
\plot  -1.5  -1  -1.1 -1.2 -1 -1.5 /
\endpicture
=
\beginpicture
\setcoordinatesystem units <.5cm,.5cm>         % sets scale
\setplotarea x from -5.5 to 3.5, y from -2 to 2    % sets plot size up
\put{\scriptsize $j$} at 1 2
\put{\scriptsize $i$} at -1 2 
\put{$\bullet$} at -3 1.5      %
\put{$\bullet$} at -2 1.5      %
\put{$\bullet$} at -1 1.5      %
\put{$\bullet$} at  0 1.5      %   Top dots
\put{$\bullet$} at  1 1.5      %
\put{$\bullet$} at  2 1.5      %
\put{$\bullet$} at  3 1.5      %   
\put{$\bullet$} at -3 -1.6          %
\put{$\bullet$} at -2 -1.6          %
\put{$\bullet$} at -1 -1.6          %
\put{$\bullet$} at  0 -1.6          %  Bottom dots
\put{$\bullet$} at  1 -1.6          %
\put{$\bullet$} at  2 -1.6          %
\put{$\bullet$} at  3 -1.6          %
%Flagpole
\plot -4.5 2 -4.5 0.62 /
\plot -4.25 2 -4.25  0.62 /
\plot -4.5 0.35 -4.5 -0.87 /
\plot -4.25 0.35 -4.25  -0.87 /
\plot -4.5 -1.18   -4.5 -2 /
\plot -4.25 -1.18 -4.25 -2 /
\ellipticalarc axes ratio 1:1 360 degrees from -4.5 2 center 
at -4.375 2
\put{$*$} at -4.375 2  
\ellipticalarc axes ratio 1:1 180 degrees from -4.5 -2 center 
at -4.375 -2 
% Vertical edges
%partials
\plot -3 1.5  -3 0.65 /
\plot -3 .3  -3 -.85 /
\plot -3 -1.15  -3 -1.5 /
\plot -2 1.5  -2 0.65 /
\plot -2 .3  -2 -.85 /
\plot -2 -1.15   -2 -1.5 /
\plot -1 0  -1 -.85 /
\plot -1 -1.15   -1 -1.5 /
\plot  0 1.5   0 -.85 /
\plot 0 -1.15   0 -1.5 /
\plot  1 1.5   1 0 /
%\plot -1 -.35   -1 -0.75 /
%\plot  0 -.35    0 -0.75 /
%full
\plot  2 1.5   2 -1.5 /
\plot  3 1.5   3 -1.5 /
\setlinear
%top curve around pole
\plot -3.2 1  -4.1 1 /
\plot -2.8 1  -2.2 1 /
\plot -1.8 1  -1.5 1 /
\plot -3.3 .5  -1.5 .5 /
\ellipticalarc axes ratio 1:1 180 degrees from -4.65 1  center 
at -4.65 .75 
\plot -4.65 0.5  -3.3 0.5 /
\setquadratic
\plot  -1.5 1  -1.1 1.2  -1 1.5 /
\plot  -1.5  0.5  -1.1 .3 -1 0 /
%bottom curve around pole
\setlinear
\plot -3.2 -.3  -4.1 -.3 /
\plot -2.8 -.3  -2.2 -.3 /
\plot -1.8 -.3  -1.2 -.3 /
\plot -.8 -.3  -.2 -.3 /
\plot  .2 -.3  .5 -.3 /
\plot -3.3 -1  .5 -1 /
\ellipticalarc axes ratio 2:1 180 degrees from -4.65 -.3  center 
at -4.65 -.65 
\plot -4.65 -1  -3.3 -1 /
\setquadratic
\plot  .5 -.3  .9 -.15  1 0 /
\plot  .5  -1  .9 -1.2 1 -1.5 /
\endpicture
=X^{\varepsilon_i}X^{\varepsilon_j}
$$
shows that the $X^{\varepsilon_i}$ all commute with each other.

%%%%%%%%%%%%%%%%%%%%%%%
%%%%  SUBSEC: Affine BMW %%%%%%%
%%%%%%%%%%%%%%%%%%%%%%%

\subsection{The affine BMW algebra $W_k$}
\label{sec:W_k-defn}

Let $C$ be a commutative ring and let $CB_k$ be the group algebra of the affine
braid group.  Fix constants
$$q, z\in C
\qquad\hbox{and}\qquad
Z_0^{(\ell)}\in C,\quad\hbox{for $\ell\in \ZZ$,}
$$
with $q$ and $z$ invertible.  Let $Y_i = zX^{\varepsilon_i}$ so that
\begin{equation}
	\label{rel:Y}
		Y_1 = zX^{\varepsilon_1},\qquad
		Y_i = T_{i-1}Y_{i-1}T_{i-1},
	\qquad\hbox{and}\qquad
		Y_iY_j=Y_jY_i,\ \ \hbox{for $1\le i,j\le k$.}
\end{equation}
In the affine braid group
\begin{equation}
	\label{rel:Antisymmetry_T}
		T_i Y_iY_{i+1}  = Y_iY_{i+1}T_i.
\end{equation}
Assume that $q-q^{-1}$ is invertible in $C$ and define $E_i$ in the group algebra of the 
affine braid group by
\begin{equation}\label{rel:E_Defn} 
%rel:E_Defn
T_i Y_i = Y_{i+1} T_i -(q-q^{-1})Y_{i+1}(1-E_i).
\end{equation}
The \emph{affine BMW algebra} $W_k$ 
is the quotient of the group algebra $CB_k$ of the affine braid group $B_k$ by the relations
\begin{equation}\label{rel:Untwisting} 
%rel:Untwisting
E_i T^{\pm1}_i = T^{\pm1}_i E_i = z^{\mp1}E_i,
\qquad
E_i T_{i-1}^{\pm1}E_i = E_i T_{i+1}^{\pm1}E_i = z^{\pm1}E_i,
\end{equation}
\begin{equation}\label{rel:Unwrapping} 
%rel:Unwrapping
E_1 Y_1^{\ell} E_1 = Z_0^{(\ell)} E_1,
\qquad
E_iY_iY_{i+1} = E_i = Y_iY_{i+1}E_i.
\end{equation}
Since $Y_{i+1}^{-1}(T_iY_i)Y_{i+1} = Y_{i+1}^{-1}Y_iY_{i+1}T_i=Y_iT_i$, conjugating 
\eqref{rel:E_Defn} by $Y_{i+1}^{-1}$ gives
\begin{equation}\label{altEdefn}
Y_i T_i = T_i Y_{i+1} - (q - q^{-1})(1 - E_i)Y_{i+1}.
\end{equation}
Left multiplying \eqref{rel:E_Defn} by $Y_{i+1}^{-1}$ and using the second identity in 
\eqref{rel:Y}
shows that \eqref{rel:E_Defn} is equivalent to
$T_i - T_i^{-1} = (q - q^{-1})(1 - E_i)$, so that
\begin{equation}\label{Skein} 
%Skein
E_i = 1-\frac{T_i-T_i^{-1}}{q-q^{-1}}
\qquad\hbox{and}\qquad
T_iT_{i+1}E_iT_{i+1}^{-1}T_i^{-1} = E_{i+1}.
\end{equation}	
Multiply the second relation in \eqref{Skein} on the left and the right by $E_i$,
and then use the relations in \eqref{rel:Untwisting} to get
$$E_iE_{i+1}E_i
= E_iT_iT_{i+1}E_iT_{i+1}^{-1}T_i^{-1}E_i
=E_iT_{i+1}E_iT_{i+1}^{-1}E_i
=zE_iT_{i+1}^{-1}E_i = E_i,
$$
so that
\begin{equation}\label{SmashE}
E_iE_{i \pm 1} E_i = E_i,
\qquad\hbox{and}\qquad
E_i^2 = \left(1 + \frac{z-z^{-1}}{q-q^{-1}}\right)E_i
\end{equation}
is obtained by multiplying the first equation in \eqref{Skein} by $E_i$ and using 
\eqref{rel:Untwisting}.  Thus, from the first relation in \eqref{rel:Unwrapping},
\begin{equation}
Z_0^{(0)} =  1 + \frac{z-z^{-1}}{q-q^{-1}}
\qquad\hbox{and}\qquad
(T_i - z^{-1})(T_i + q^{-1})(T_i - q) = 0,
\end{equation}
since $(T_i - z^{-1})(T_i + q^{-1})(T_i - q)T_i^{-1}
=(T_i-z^{-1})(T_i^2-(q-q^{-1})T_i-1)T_i^{-1}
=(T_i-z^{-1})(T_i-T_i^{-1}-(q-q^{-1})) = (T_i-z^{-1})(q-q^{-1})(-E_i)
=-(z^{-1}-z^{-1})(q-q^{-1}) = 0$.
The relations
\begin{equation}
E_{i+1} E_i = E_{i+1}T_iT_{i+1}, \qquad\quad
E_i E_{i+1} = T^{-1}_{i+1}T^{-1}_i E_{i+1},
\end{equation}
\begin{equation}
T_i E_{i+1}E_{i} = T_{i+1}^{-1} E_i, \quad \text{ and } \quad E_{i+1} E_i T_{i+1} = E_{i + 1} T_i^{-1},
\end{equation}
follow from the computations
\begin{align*}
&E_{i+1}T_iT_{i+1}
=z(E_{i+1}T_i^{-1}E_{i+1})T_iT_{i+1}
=z(z^{-1}E_{i+1}T_{i+1}^{-1})T_i^{-1}E_{i+1}T_iT_{i+1}
=E_{i+1}E_i, \\
&T_{i+1}^{-1}T_i^{-1}E_{i+1}
=T_{i+1}^{-1}T_i^{-1}(z^{-1}E_{i+1}T_iE_{i+1})
=T_{i+1}^{-1}T_i^{-1}z^{-1}E_{i+1}T_izT_{i+1}E_{i+1}
=E_iE_{i+1}, \\
&T_iE_{i+1}E_i
=T_iE_{i+1}(T_i^{-1}E_iz^{-1})
=z^{-1}T_{i+1}^{-1}E_iT_{i+1}E_iz^{-1}
=T_{i+1}^{-1}zE_iz^{-1}
=T_{i+1}^{-1}E_i, 
\quad\hbox{and}
\\
&E_{i+1}E_i T_{i+1}
=E_{i+1}T_{i+1}^{-1}zE_iT_{i+1}
=zE_{i+1}T_iE_{i+1}T_i^{-1}
=zz^{-1}E_{i+1}T_i^{-1}
= E_{i+1}T_i^{-1}.
\end{align*}

\begin{remark}  A consequence (see \eqref{Genfnadm}) of the defining relations of $W_k$ is the equation
\begin{equation*}
\left(
Z_0^- - \frac{z}{q-q^{-1}} - \frac{u^2}{u^2-1}
\right)
\left(
Z_0^+ + \frac{z^{-1}}{q-q^{-1}} - \frac{u^2}{u^2-1}
\right)E_1
= \frac{-(u^2-q^2)(u^2-q^{-2})}{(u^2-1)(q-q^{-1})^2}E_1,
\end{equation*}
where $Z_0^+$ and $Z_0^-$ are the generating functions
$$Z^+_0 = \sum_{\ell\in \ZZ_{\ge 0}} Z_0^{(\ell)}u^{-\ell}
\qquad\hbox{and}\qquad
Z^-_0 = \sum_{\ell\in \ZZ_{\le 0}} Z_0^{(\ell)}u^{-\ell}.
$$
This means that, unless the parameters $Z_0^{(\ell)}$ are chosen carefully, it is likely
that $E_1=0$ in $W_k$.
\end{remark}

\begin{remark}  From the point of view of Schur-Weyl duality for the affine BMW algebra
(see \cite{OR} and \cite{Ra}) the natural choice of base ring is the center of the 
quantum group corresponding to the orthogonal or symplectic Lie algebra which, by
the (quantum version) of the Harish-Chandra isomorphism, is isomorphic to the 
subring of symmetric Laurent polynomials given by
\begin{equation*}\label{aBMW base ring}
C = \{ z\in \CC[L_1^{\pm1},\ldots, L_r^{\pm1}]^{S_r} \ |\
z(L_1,L_2,\ldots, L_r) = z(L_1^{-1}, L_2,\ldots, L_r)\},
\end{equation*}
where the symmetric group $S_r$ acts by permuting the variables $L_1,\ldots, L_r$. Here
the constants $Z_0^{(\ell)}\in C$ are given, explicitly, by setting the generating functions
$Z_0^+$ and $Z_0^-$ equal, up to a normalization, to
$$
\prod_{i=1}^r \frac{(u-qL_i)} {(u-q^{-1}L_i)}
\cdot \frac{(u-qL_i^{-1} )} {(u-q^{-1}L_i^{-1})}
\qquad\hbox{and}\qquad
\prod_{i=1}^r \frac{(u-q^{-1}L_i)}{(u-qL_i)} 
\cdot \frac{(u-q^{-1}L_i^{-1})}{(u-qL_i^{-1} )},
$$
respectively.  This choice of $C$ and the $Z_0^{(\ell)}$ are the \emph{universal
admissible parameters} for $W_k$.  This point of view will be explained in \cite{DRV}.
\end{remark}

\begin{remark} Careful manipulation of the defining relations of $W_k$ provides
an inductive presentation of $W_k$ as 
$$W_k = W_{k-1}E_{k-1}W_{k-1} 
+W_{k-1}T_{k-1}W_{k-1}
+W_{k-1}T_{k-1}^{-1}W_{k-1}
+\sum_{\ell\in \ZZ} W_{k-1}Y_k^\ell W_{k-1}
$$
(see \cite[Prop.\ 3.16]{GH1} or \cite{Ha3}), and provides that
$$
E_kW_kE_k = W_{k-1}E_k
\qquad\hbox{and}\qquad
\begin{matrix}
W_k &\longrightarrow &W_{k-1}E_k \\
b &\longmapsto &E_kbE_k
\end{matrix}
$$
is a $(W_{k-1}, W_{k-1})$-bimodule homomorphism (see \cite[Prop. 3.17]{GH1}).
These structural facts are important to the understanding of $W_k$ by 
``Jones basic constructions''.
Under the conditions of Theorem \ref{AffineBMWbasis}(a) it is true,
but not immediate from the defining relations,
that the natural homomorphism $W_{k-1}\to W_k$ is injective so that 
$W_{k-1}$ is a subalgebra of $W_k$ (see \cite[Cor.\ 6.15]{GH1}).
\end{remark}

\subsubsection{Quotients of $W_k$}

The \emph{affine Hecke algebra} $H_k$ is the affine BMW algebra $W_k$ 
with the additional relations
\begin{equation}\label{rel:hecke}
	E_i=0, 
	\qquad\hbox{for $i=1,\ldots, k-1$.}
\end{equation}
Fix $b_1,\ldots, b_r\in C$.  The \emph{cyclotomic BMW algebra} $W_{r,k}(b_1,\ldots, b_r)$ 
is the affine BMW algebra $W_k$ with the additional relation 
\begin{equation}\label{cycrelationA}
	(Y_1-b_1)\cdots (Y_1-b_r) = 0.
\end{equation}
The \emph{cyclotomic Hecke algebra} $H_{r,k}(b_1,\ldots, b_r)$ is the affine Hecke algebra
$H_k$ with 
the additional relation \eqref{cycrelationA}.

\begin{remark}\label{Polysubalg}
Since the composite map
$C[Y_1^{\pm1}, \ldots, Y_k^{\pm1}] \to CB_k \to W_k\to H_k$ is injective and the 
last two maps are surjections, it follows that the Laurent polynomial ring 
$C[Y_1^{\pm1}, \ldots, Y_k^{\pm1}] $ is a subalgebra of $CB_k$ and $W_k$.
\end{remark}

\begin{remark}  A consequence of the relation
\eqref{cycrelationA} in $W_{r,k}(b_1,\ldots, b_r)$ is 
\begin{equation}
\left(Z_0^+ + \frac{z^{-1}}{q-q^{-1}}-\frac{u^2}{u^2-1}\right)E_1
=
\left( \frac{z}{q-q^{-1}}+\frac{uz}{u^2-1}\right)
\left(\prod_{j=1}^r \frac{u-b_j^{-1}}{u-b_j}\right)E_1.
\end{equation}
This equation makes the data of the values $b_i$ almost equivalent to the data of
the $Z_0^{(\ell)}$.
\end{remark}

%%%%%%%%%%%%%%%%%%%%%%%
%%%%  SEC: IDENTITIES %%%%%%%%%%
%%%%%%%%%%%%%%%%%%%%%%%
\section{Identities in affine and degenerate affine BMW algebras}
\label{sec:identities}

In \cite{Naz}, Nazarov defined some naturally occurring central elements in the degenerate
affine BMW algebra $\cW_k$ and proved a remarkable recursion for them.  This recursion 
was generalized to analogous central elements in the affine BMW algebra $W_k$ by 
Beliakova-Blanchet \cite{BB}.   In both cases, the recursion was accomplished with an 
involved computation.  In this section, we provide a new proof of the Nazarov and 
Beliakov-Blanchet recursions by lifting them out of the center, to intertwiner-like identities 
in $\cW_k$ and $W_k$ (Propositions \ref{intw}and \ref{Intw}).  These intertwiner-like identities 
for the degenerate affine and affine BMW algebras are reminiscent of the intertwiner identities 
for the degenerate affine and affine Hecke algebras found, for example, in 
\cite[Prop.\ 2.5(c)]{KR} and \cite[Prop.\ 2.14(c)]{Ra1}, respectively.
The central element recursions of \cite{Naz} and \cite{BB} are then obtained by multiplying the intertwiner-like identities by the projectors $e_k$ and $E_k$, respectively.  We have carefully arranged the proofs so that the degenerate affine and the affine cases are exactly in parallel.

%%%%%%%%%%%%%%%%%%%%%%%
%%%%  SUBSEC: Deg Identities %%%%%%%
%%%%%%%%%%%%%%%%%%%%%%%
\subsection{The degenerate affine case}

Let $u$ be a variable,
%\begin{equation}\label{pdefns}
\begin{equation}\label{prels1}
u_i^+ = \frac{1}{u-y_i},
\qquad\hbox{and note that}\qquad
u_i^+u_{i+1}^+ 
%= \frac{1}{(u-y_i)(u-y_{i+1})}
%=\frac{1}{2u-(y_i+y_{i+1})}\left(\frac{1}{u-y_i}+\frac{1}{u+y_i}\right)
=\frac{1}{2u-(y_i+y_{i+1})}(u_i^++u_{i+1}^+).
\end{equation}
By \eqref{altedefn} and the definition of $e_i$ in \eqref{rel:e_defn},
$$(u-y_{i+1}) t_{s_i} = t_{s_i}(u-y_i) -(1-e_i)
\quad\text{and}\quad 
(u-y_i)t_{s_i} = t_{s_i}(u-y_{i+1}) + (1-e_i),$$ which 
%with  $e_i(y_i+y_{i+1}) = (y_i+y_{i+1})e_i = 0$
%(the second relation in \eqref{rel:unwrapping})
give
\begin{equation}\label{prels3}
t_{s_i}u_i^+ = u_{i+1}^+t_{s_i}+u_{i+1}^+e_iu_i^+ - u_{i+1}^+u_i^+, 
\quad\hbox{and}\quad
t_{s_i}u_{i+1}^+ = u_i^+t_{s_i}-u_i^+e_iu_{i+1}^+ + u_{i+1}^+u_i^+,
\end{equation}
respectively.

\begin{prop}\label{intw}  In the degenerate affine BMW algebra $\cW_{i+1}$,
\begin{align}
&\left(
e_i\frac{1}{1-y_{i+1}}-t_{s_i}-\frac{1}{2u-(y_i+y_{i+1})}
\right)
\left(
e_i\frac{1}{1-y_{i}}+t_{s_i}-\frac{1}{2u-(y_i+y_{i+1})}
\right)\nonumber\\
&\qquad = 
\frac{-(2u-(y_i+y_{i+1})+1)(2u-(y_i+y_{i+1})-1)}
{(2u-(y_i+y_{i+1}))^2},
\label{genfnlift}
\end{align}
and
\begin{align}
&\left( u_{i+1}^+  +t_{s_i} - e_i \frac{1}{2u-(y_i+y_{i+1})} \right)
-u_i^+\left(u_{i+1}^+ + t_{s_i} - e_i\frac{1}{2u-(y_i+y_{i+1})}\right)u_i^+  \label{recursionlift} \\
&= \left(t_{s_i}u_i^+t_{s_i} + t_{s_i}  - e_i \frac{1}{2u-(y_i+y_{i+1})}\right)
-u_{i+1}^+\left( e_iu_i^+e_i + \epsilon e_i - e_i \frac{1}{2u-(y_i+y_{i+1})}\right) u_{i+1}^+.
 \nonumber
\end{align}
\end{prop}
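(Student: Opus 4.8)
\noindent\emph{Overall plan.} I would prove both identities by direct manipulation in $\cW_{i+1}$, writing $w=2u-(y_i+y_{i+1})$ and reading the two $e_i$-prefixed factors in \eqref{genfnlift} as $e_iu_{i+1}^+=e_i(u-y_{i+1})^{-1}$ and $e_iu_i^+=e_i(u-y_i)^{-1}$. The whole computation will rest on the following facts, all available from the relations established so far: $t_{s_i}^2=1$ (from \eqref{easyrels}); the absorption $e_it_{s_i}=t_{s_i}e_i=\epsilon e_i$ (from \eqref{rel:untwisting}); the fact that $t_{s_i}$ commutes with $y_i+y_{i+1}$ (from \eqref{rel:graded_braid3}), hence with $w$ and $w^{-1}$; the collapse $e_iw=we_i=2u\,e_i$, so that $e_iw^{-1}=w^{-1}e_i=(2u)^{-1}e_i$, coming from $e_i(y_i+y_{i+1})=0=(y_i+y_{i+1})e_i$ in \eqref{rel:unwrapping}; and the two ``braid--twist'' relations \eqref{prels3} together with the partial fraction identity $u_i^+u_{i+1}^+=w^{-1}(u_i^++u_{i+1}^+)$ from \eqref{prels1}. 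I expect that the \emph{scalar} unwrapping $e_1y_1^\ell e_1=z_0^{(\ell)}e_1$ will never be needed: the doubled factors $e_i(\cdots)e_i$ that arise should always cancel against each other, so both identities ought to hold already at the level of these braid--type relations.

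\noindent\emph{For \eqref{genfnlift},} I would write the left side as $(A-t_{s_i}-w^{-1})(A'+t_{s_i}-w^{-1})$ with $A=e_iu_{i+1}^+$ and $A'=e_iu_i^+$, and multiply out. The products $(-t_{s_i})(t_{s_i})=-1$ and $(-w^{-1})(-w^{-1})=w^{-2}$ already assemble the right side, since $-(w+1)(w-1)/w^2=-1+w^{-2}$, and the cross terms $t_{s_i}w^{-1}-w^{-1}t_{s_i}$ cancel because $t_{s_i}$ commutes with $w^{-1}$. It then remains to check $AA'+At_{s_i}-Aw^{-1}-t_{s_i}A'-w^{-1}A'=0$. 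Here $t_{s_i}A'=\epsilon A'$, $w^{-1}A'=(2u)^{-1}A'$ and $Aw^{-1}=(2u)^{-1}A$ by absorption and collapse, while rewriting $e_iu_{i+1}^+t_{s_i}$ via \eqref{prels3} makes the two $e_i(\cdots)e_i$ contributions cancel and leaves $AA'+At_{s_i}=\epsilon e_iu_i^++e_iu_{i+1}^+u_i^+$. A final application of \eqref{prels1} turns $e_iu_{i+1}^+u_i^+$ into $(2u)^{-1}(e_iu_i^++e_iu_{i+1}^+)$, after which everything cancels in pairs.

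\noindent\emph{For \eqref{recursionlift},} I would set $\Phi=u_{i+1}^++t_{s_i}-e_iw^{-1}$ and $\Theta=e_iu_i^+e_i+\epsilon e_i-e_iw^{-1}$. Since $\Phi-t_{s_i}+e_iw^{-1}=u_{i+1}^+$, the identity is equivalent to $u_{i+1}^+-u_i^+\Phi u_i^+-t_{s_i}u_i^+t_{s_i}+u_{i+1}^+\Theta u_{i+1}^+=0$. I would expand $u_i^+t_{s_i}u_i^+$ and $t_{s_i}u_i^+t_{s_i}$ with the first relation of \eqref{prels3}; the purely rational terms and the rational--$t_{s_i}$ terms then cancel using only $u_i^+u_{i+1}^+=u_{i+1}^+u_i^+$, leaving a single stray term $-u_{i+1}^+e_iu_i^+t_{s_i}$ alongside the one-- and two--$e_i$ terms of $u_i^+\Phi u_i^+$ and $u_{i+1}^+\Theta u_{i+1}^+$. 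The crucial move is to rewrite the stray term with the \emph{second} (mirror) relation of \eqref{prels3}, which expresses $u_i^+t_{s_i}$ through $t_{s_i}u_{i+1}^+$; using $e_it_{s_i}=\epsilon e_i$ this produces exactly $-u_{i+1}^+e_iu_i^+e_iu_{i+1}^+$, cancelling the doubled--$e_i$ block of $\Theta$, together with $-\epsilon u_{i+1}^+e_iu_{i+1}^+$, cancelling its $\epsilon$--block. What survives is a sum of single--$e_i$ terms which, after \eqref{prels1} and $e_iw^{-1}=(2u)^{-1}e_i$, collapses to $u_{i+1}^+e_i\big(w^{-1}-(2u)^{-1}\big)(u_i^++u_{i+1}^+)$; since $e_i\big(w^{-1}-(2u)^{-1}\big)=0$, this vanishes.

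\noindent\emph{Main obstacle.} I anticipate that the difficulty is entirely organizational rather than structural: one must classify every monomial by its number of $e_i$'s and by the presence or absence of $t_{s_i}$, and apply to each class the single relation that reduces it. The one genuinely delicate choice is the \emph{orientation} of \eqref{prels3} applied to the stray term $-u_{i+1}^+e_iu_i^+t_{s_i}$ --- only the mirror form regenerates precisely the block $u_{i+1}^+e_iu_i^+e_iu_{i+1}^+$ that cancels the doubled--$e_i$ term of $\Theta$ --- after which the whole calculation closes on the single collapse $e_i\big(w^{-1}-(2u)^{-1}\big)=0$. Arranged this way the argument should run in exact parallel to the affine statement of Proposition~\ref{Intw}.
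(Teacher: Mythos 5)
Your proposal is correct and I have checked that it closes: the cross terms in \eqref{genfnlift} cancel exactly as you say, the stray term $-u_{i+1}^+e_iu_i^+t_{s_i}$ in \eqref{recursionlift} does regenerate $-u_{i+1}^+e_iu_i^+e_iu_{i+1}^+-\epsilon u_{i+1}^+e_iu_{i+1}^+$ when rewritten with the mirror form of \eqref{prels3}, the residual single-$e_i$ terms are annihilated by $\bigl(w^{-1}-(2u)^{-1}\bigr)e_i=0$ with $w=2u-(y_i+y_{i+1})$, and you are right that the scalar relation $e_1y_1^\ell e_1=z_0^{(\ell)}e_1$ is never needed. The paper's proof uses exactly the same ingredients --- \eqref{prels1}, \eqref{prels3}, the absorption $e_it_{s_i}=\epsilon e_i$ and the collapse $e_i(y_i+y_{i+1})=0$ --- only organized differently: for \eqref{genfnlift} it packages the whole expansion into the single intertwiner identity $Au_i^+=u_{i+1}^+B$ with $A=t_{s_i}+w^{-1}$ and $B=e_iu_i^++t_{s_i}-w^{-1}$ together with $Ae_i=e_iA$, and for \eqref{recursionlift} it derives the identity by successive substitution into $t_{s_i}\cdot\eqref{prels3}$ rather than verifying that the difference of the two sides vanishes.
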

\begin{proof}
Putting \eqref{prels1} into the first identity in \eqref{prels3} says that if
$$ A = t_{s_i}+\frac{1}{2u-(y_i+y_{i+1})} \quad \text{ and } \quad B=  e_iu_i^++t_{s_i}-\frac{1}{2u-(y_i+y_{i+1})}$$
then 
$$A u_i^+ = u_{i+1}^+ B, \qquad \text{ and }\qquad Ae_i = e_iA$$
follows from \eqref{rel:untwisting} and \eqref{rel:unwrapping}. So
\begin{align*}
&\left(
e_iu_{i+1}^+ - t_{s_i} - \frac{1}{2u-(y_i+y_{i+1})}
\right)
\left(
e_iu_i^+ + t_{s_i} - \frac{1}{2u-(y_i+y_{i+1})}
\right) \\
& \qquad = e_i u_{i+1}^+ B - AB
= e_iAu_i^+ -AB
= Ae_i u_i^+ - AB
= A(e_i u_i^+ - B)\\
&\qquad= 
-\left(t_{s_i} + \frac{1}{2u-(y_i+y_{i+1})}\right)
\left(t_{s_i} - \frac{1}{2u-(y_i+y_{i+1})}\right),
\end{align*}
and multiplying out the right hand side gives \eqref{genfnlift}.

Multiplying the second relation in \eqref{prels3} by $t_{s_i}$ gives
$$u_{i+1}^+ -  t_{s_i}u_{i+1}^+u_i^+ = t_{s_i}u_i^+t_{s_i}-t_{s_i}u_i^+e_iu_i^- 
$$
and again using the relations in \eqref{prels3} gives       
$$u_{i+1}^+ -  u_i^+(t_{s_i} - e_iu_{i+1}^++u_{i+1}^+)u_i^+ 
= t_{s_i}u_i^+t_{s_i}-u_{i+1}^+(t_{s_i} +e_iu_i^+-u_i^+)e_iu_{i+1}^+.
$$
Using \eqref{prels1} and
$$\hbox{adding}\quad
t_{s_i} - e_i\left( \frac{1}{2u-(y_i+y_{i+1})}\right) - \frac{1}{2u-(y_i+y_{i+1})}u_i^+e_iu_{i+1}^+
\quad\hbox{to each side}
$$
gives
\begin{align*}
&\left( u_{i+1}^+  +t_{s_i} - e_i \frac{1}{2u-(y_i+y_{i+1})} \right)
-u_i^+\left(u_{i+1}^+ + t_{s_i} - e_i\frac{1}{2u-(y_i+y_{i+1})}\right)u_i^+ \\
&\qquad= t_{s_i}u_i^+t_{s_i} + t_{s_i}  - e_i \frac{1}{2u-(y_i+y_{i+1})}
-u_{i+1}^+\left( e_iu_i^+ + t_{s_i} - \frac{1}{2u-(y_i+y_{i+1})}\right) e_i u_{i+1}^+ \\
&\qquad= \left(t_{s_i}u_i^+t_{s_i} + t_{s_i}  - e_i \frac{1}{2u-(y_i+y_{i+1})}\right)
-u_{i+1}^+\left( e_iu_i^+e_i + \epsilon e_i - e_i \frac{1}{2u-(y_i+y_{i+1})}\right) u_{i+1}^+,
\end{align*}
completing the proof of \eqref{recursionlift}.
\end{proof}

Introduce notation $z_{i-1}^{(\ell)}e_i$ and the generating function $z_{i-1}(u)e_i$ by
\begin{equation}\label{zidefn}
z_{i-1}(u)e_i = \sum_{\ell \in \ZZ_{\ge 0}} z_{i-1}^{(\ell )} e_iu^{-\ell }
=e_i\left(\sum_{\ell \in \ZZ_{\ge 0}} y_i^{\ell} u^{-\ell }\right)e_i
=e_i\frac{1}{1-y_iu^{-1}} e_i,
\end{equation}
By \cite[Lemma 4.15]{AMR}, or the identity \eqref{grproduct} below, $z_{i-1}^{(\ell)} \in \cW_{i-1}$ 
for $\ell\in \ZZ_{\ge0}$.
If
\begin{equation}\label{prels2}
u_i^- = \frac{1}{u+y_i}
\qquad \hbox{then}\qquad
e_iu_{i+1}^+ = e_iu_i^-,\quad
u_{i+1}^+e_i = u_i^- e_i, \quad
e_iu_i^{\pm}e_i = \frac{z_{i-1}(\pm u)}{u} e_i,
\end{equation}
where, for $i=1$, the last identity is a restatement of the first identity in \eqref{rel:unwrapping}.
The identities \eqref{genfnadm},
\eqref{grrecursion},
and \eqref{grproduct} of
the following theorem are
\cite[Lemma 2.5]{Naz}, \cite[Prop. 4.2]{Naz} and 
\cite[Lemma 3.8]{Naz}, respectively.

\begin{thm} \label{ziprop}  Let $z_{i-1}^{(\ell)}$ and $z_{i-1}(u)$ be as defined in 
\eqref{zidefn}.  
Then $z_{i-1}^{(\ell)}\in Z(\cW_{i-1})$, 
\begin{align}
&\label{genfnadm}
\left( z_{i-1}(-u) - \big( \hbox{$\frac12$} + \epsilon\,u \big) \right)
\left( z_{i-1}(u) - \big( \hbox{$\frac12$} - \epsilon\,u \big) \right) e_i
=
\big( \hbox{$\frac12$} - \epsilon\,u \big)\big( \hbox{$\frac12$} + \epsilon\,u \big) e_i,\\
&\label{grrecursion}
\big(z_i(u) +\epsilon\,u-\hbox{$\frac{1}{2}$}\big)e_{i+1} 
= \left(z_{i-1}(u)+\epsilon\,u-\hbox{$\frac{1}{2}$}\right)
\left( \frac{\big((u+y_i)^2-1\big)(u-y_i)^2}{\big((u-y_i)^2-1\big)(u+y_i)^2}\right)
e_{i+1},
\quad\hbox{and}\\
&\label{grproduct}
(z_{k-1}(u)+\epsilon\,u-\hbox{$\frac12$})e_{i+1} = 
\big(z_0(u)+\epsilon\,u-\hbox{$\frac12$}\big)
\prod_{i=1}^{k-1} \frac{(u+y_i-1)(u+y_i+1)(u-y_i)^2}{(u+y_i)^2(u-y_i+1)(u-y_i-1)}e_{i+1}.
\end{align}
\end{thm}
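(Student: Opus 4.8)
The plan is to derive each of the three displayed identities by right- or two-sided multiplication of the intertwiner identities \eqref{genfnlift} and \eqref{recursionlift} of Proposition \ref{intw} by the elements $e_i$ and $e_{i+1}$, as indicated in the introduction to this section; centrality and the membership $z_{i-1}^{(\ell)}\in\cW_{i-1}$ will then follow from \eqref{grproduct} together with the basic-construction structure of $\cW_k$ recalled in the Remarks. Throughout I write $c = 1/(2u-(y_i+y_{i+1}))$ and use repeatedly that $e_i(y_i+y_{i+1})=0$ (so $ce_i = e_ic = \tfrac12 u^{-1}e_i$), that $t_{s_i}e_i = e_it_{s_i} = \epsilon e_i$, that $e_{i+1}e_ie_{i+1}=e_{i+1}$, and the evaluations $e_iu_i^+e_i = \tfrac{z_{i-1}(u)}{u}e_i$, $e_iu_{i+1}^+e_i = e_iu_i^-e_i = \tfrac{z_{i-1}(-u)}{u}e_i$, and $e_{i+1}u_{i+1}^+e_{i+1} = \tfrac{z_i(u)}{u}e_{i+1}$ from \eqref{zidefn} and \eqref{prels2}.

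For \eqref{genfnadm} I multiply \eqref{genfnlift} on the right by $e_i$. Acting on the rightmost factor gives $(e_iu_i^+ + t_{s_i} - c)e_i = (\tfrac{z_{i-1}(u)}{u}+\epsilon-\tfrac{1}{2u})e_i$; since $z_{i-1}(u)\in\cW_{i-1}$ commutes with $e_i$, $t_{s_i}$, $u_{i+1}^+$ and $c$, this scalar-like factor passes to the front, and applying the analogous collapse $(e_iu_{i+1}^+ - t_{s_i} - c)e_i = (\tfrac{z_{i-1}(-u)}{u}-\epsilon-\tfrac{1}{2u})e_i$ turns the left side of \eqref{genfnlift} into $(\tfrac{z_{i-1}(u)}{u}+\epsilon-\tfrac{1}{2u})(\tfrac{z_{i-1}(-u)}{u}-\epsilon-\tfrac{1}{2u})e_i$. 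The right side of \eqref{genfnlift} is a rational function of $y_i+y_{i+1}$, so right multiplication by $e_i$ replaces $y_i+y_{i+1}$ by $0$, giving $\tfrac{-(4u^2-1)}{4u^2}e_i$. Clearing $u^2$ and using $\epsilon^2=1$ yields \eqref{genfnadm} (the two $z_{i-1}$-factors commute since the $z_{i-1}^{(\ell)}$ do).

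For \eqref{grrecursion} I multiply \eqref{recursionlift} on both the left and the right by $e_{i+1}$. On the left side, $e_{i+1}(u_{i+1}^+ + t_{s_i} - e_ic)e_{i+1} = (\tfrac{z_i(u)}{u}+\epsilon-\tfrac{1}{2u})e_{i+1}$, and since $u_i^+$ commutes with $e_{i+1}$ and with $e_{i+1}u_{i+1}^+e_{i+1}$, the whole left side becomes $(\tfrac{z_i(u)}{u}+\epsilon-\tfrac{1}{2u})(1-(u_i^+)^2)e_{i+1}$. On the right side the two essential computations are $e_{i+1}t_{s_i}u_i^+t_{s_i}e_{i+1} = \tfrac{z_{i-1}(u)}{u}e_{i+1}$, obtained from the tangle relations $e_{i+1}t_{s_i}=e_{i+1}e_it_{s_{i+1}}$ and $t_{s_{i+1}}t_{s_i}e_{i+1}=e_ie_{i+1}$ of \eqref{tangle1}--\eqref{tangle2} followed by $e_iu_i^+e_i=\tfrac{z_{i-1}(u)}{u}e_i$, and $e_{i+1}u_{i+1}^+e_iu_{i+1}^+e_{i+1} = (u_i^-)^2e_{i+1}$, obtained from $u_{i+1}^+e_i = u_i^-e_i$, $e_iu_{i+1}^+ = e_iu_i^-$ and the commutation of $u_i^-$ with $e_{i+1}$; together with $e_{i+1}t_{s_i}e_{i+1}=\epsilon e_{i+1}$ these collapse the right side to $(\tfrac{z_{i-1}(u)}{u}+\epsilon-\tfrac{1}{2u})(1-(u_i^-)^2)e_{i+1}$. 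Equating, noting $1-(u_i^\pm)^2 = \tfrac{(u\mp y_i)^2-1}{(u\mp y_i)^2}$, and moving the factor $\tfrac{(u-y_i)^2}{(u-y_i)^2-1}$ across $z_i(u)e_{i+1}$ (legitimate because $y_i$ commutes with $e_{i+1}$ and $u_{i+1}^+$, hence $f(y_i)\,z_i(u)\,e_{i+1}=z_i(u)\,f(y_i)\,e_{i+1}$) yields \eqref{grrecursion}. I expect this projection — keeping straight the mixed words in $e_i$, $e_{i+1}$, $t_{s_i}$, $u_i^+$ and $u_{i+1}^+$ — to be the main obstacle.

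Finally, \eqref{grproduct} follows by iterating \eqref{grrecursion}. Using the injectivity of $b\mapsto be_{i+1}$ on $\cW_i$ (part of the Jones basic-construction structure recalled in the Remark and established in \cite{AMR}), \eqref{grrecursion} lifts to the idempotent-free recursion $z_i(u)+\epsilon u-\tfrac12 = (z_{i-1}(u)+\epsilon u-\tfrac12)R_i$ in $\cW_i$, where $R_i = \tfrac{(u+y_i-1)(u+y_i+1)(u-y_i)^2}{(u+y_i)^2(u-y_i+1)(u-y_i-1)}$; since the $R_j$ commute with one another and with the scalar $z_0(u)$, iterating from level $k-1$ down to $1$ and right-multiplying by $e_k$ gives the product formula. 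Reading off the coefficient of $u^{-\ell}$ in \eqref{grproduct} exhibits $z_{i-1}^{(\ell)}$ as a polynomial in $y_1,\dots,y_{i-1}$ and the $z_0^{(m)}$, so $z_{i-1}^{(\ell)}\in\cW_{i-1}$; and centrality follows because every generator of $\cW_{i-1}$ commutes with both $e_i$ and $y_i$, whence $(z_{i-1}^{(\ell)}w-wz_{i-1}^{(\ell)})e_i = 0$ for all $w\in\cW_{i-1}$, so the same injectivity gives $z_{i-1}^{(\ell)}\in Z(\cW_{i-1})$.
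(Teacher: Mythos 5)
Your proposal is correct and follows the paper's proof essentially step for step: \eqref{genfnadm} by right-multiplying \eqref{genfnlift} by $e_i$, \eqref{grrecursion} by sandwiching \eqref{recursionlift} between two copies of $e_{i+1}$ using exactly the two collapses $e_{i+1}t_{s_i}u_i^+t_{s_i}e_{i+1}=e_{i+1}e_iu_i^+e_ie_{i+1}$ and $e_{i+1}u_{i+1}^+e_iu_{i+1}^+e_{i+1}=(u_i^-)^2e_{i+1}$, then the ratio computation $\frac{1-(u_i^-)^2}{1-(u_i^+)^2}$, and \eqref{grproduct} by iterating \eqref{grrecursion}; the centrality argument is also the paper's. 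The one genuine divergence is how you justify the iteration for \eqref{grproduct}: you lift \eqref{grrecursion} to an idempotent-free identity in $\cW_i$ via injectivity of $b\mapsto be_{i+1}$, and that injectivity is only guaranteed under the admissibility hypotheses of Theorem \ref{thm:W_basis}(a), whereas Theorem \ref{ziprop} carries no such hypothesis (and would be needed, e.g., even when the parameters force $e_1=0$). The lift is avoidable: insert $e_k=e_ke_{k-1}e_k$, use that $z_j^{(\ell)}$ for $j\le k-2$ and $y_1,\dots,y_{k-2}$ all commute with $e_k$ to pull the outer $e_k$'s past the coefficients, and apply \eqref{grrecursion} at the next level down to the middle $e_{k-1}$; this runs the whole induction inside the ideal $\cW_ke_k$ and needs no freeness or basis theorem.
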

%%%%%%
\begin{proof} 
Since the generators $t_{s_1},\ldots, t_{s_{i-2}}$, $e_1,\ldots, e_{i-2}$ and $y_1,\ldots, y_{i-1}$
of $\cW_{i-1}$
all commute with $e_i$ and $y_i$ it follows that $z_{i-1}^{(\ell)}\in Z(\cW_{i-1})$.

Multiply \eqref{genfnlift} on the right by $e_i$ to get
\eqref{genfnadm}, since $(\half-u)(\half + u) = (\half-\epsilon u)(\half + \epsilon u)$.

Multiplying \eqref{recursionlift} on the left and right by $e_{i+1}$ and using the relations in \eqref{rel:skein}, \eqref{tangle1} and \eqref{tangle2},
$$e_{i+1}t_{s_i}u_i^+t_{s_i}e_{i+1} = e_{i+1}t_{s_i}t_{s_{i+1}}u_i^+t_{s_{i+1}}t_{s_i}e_{i+1}
=e_{i+1}e_iu_i^+e_ie_{i+1}, \text{ and}$$
$$e_{i+1}u_{i+1}^+e_iu_{i+1}^+ e_{i+1}= e_{i+1}u_{i}^-e_iu_{i}^- e_{i+1}= u_{i}^-e_{i+1}e_i e_{i+1}u_{i}^-
=(u_{i}^-)^2e_{i+1},$$
gives
$$\left( \frac{z_i(u)}{u} + \epsilon - \frac{1}{2u} \right) 
	\left( 1- (u_i^+)^2 \right)e_{i+1}
 = \left(\frac{z_{i-1}(u)}{u} + \epsilon - \frac{1}{2u} \right) \left( 1 - (u_i^-)^2 \right)e_{i+1}.$$
So \eqref{grrecursion} follows from
\begin{align*}
\frac{1-(u_i^-)^2}{1-(u_i^+)^2}
&=\frac{1- \big(\frac{1}{u+y_i}\big)^2}{1- \big(\frac{1}{u-y_i}\big)^2} 
=\frac{(u^2+2y_iu+y_i^2-1)(u-y_i)^2}{(u^2-2y_iu+y_i^2-1)(u+y_i)^2}
=\frac{(u+y_i-1)(u+y_i+1)(u-y_i)^2}
{(u-y_i-1)(u-y_i+1)(u+y_i)^2}.
\end{align*}
Finally, 
relation \eqref{grproduct} follows, by induction, from \eqref{grrecursion}.
\end{proof}

\begin{remark}
Using the expansion
$$\frac{1}{u-a} = \frac{u^{-1}}{1-au^{-1}} = \sum_{\ell\in \ZZ_{\ge 1}} a^{\ell-1}u^{-\ell},$$
and taking the coefficient of $u^{-(\ell+1)}$
on each side of the relations in \eqref{prels3} gives
\begin{align}
\label{eq:tyi}
t_{s_i}y_i^\ell &= y_{i+1}^\ell t_{s_i} - (y_{i+1}^{\ell-1}(1-e_i)+y_{i+1}^{\ell-2}(1-e_i)y_i
+\cdots+(1-e_i)y_i^{\ell-1}),  \quad \text{and} \\
\label{eq:tyip1}
t_{s_i}y_{i+1}^\ell &= y_i^\ell t_{s_i} + y_i^{\ell-1}(1-e_i)+y_i^{\ell-2}(1-e_i)y_{i+1}+\cdots
+(1-e_i)y_{i+1}^{\ell-1},
\end{align}
respectively. 
\end{remark}

\begin{remark}
Taking the coefficient of $u^{-s}$ on each side of \eqref{genfnadm}
gives a trivial identity for even $s$ but, for odd $s=2\ell+1$, gives
\begin{equation}\label{admissibility}
\left(2z_{i-1}^{(2\ell+1)}+z_{i-1}^{(2\ell)}-
 (z_{i-1}^{(2\ell)}z_{i-1}^{(0)}-z_{i-1}^{(2\ell-1)}z_{i-1}^{(1)}+\cdots
+z_{i-1}^{(0)}z_{i-1}^{(2\ell)})\right)e_i
=0
\end{equation}
which is the admissibility relation in \cite[Remark 2.11]{AMR} (see also
\cite[(4.6)]{Naz}.)
\end{remark}

%%%%%%%%%%%%%%%%%%%%%%%%
%%%%  SUBSEC: Affine Identities %%%%%%%
%%%%%%%%%%%%%%%%%%%%%%%%
\subsection{The affine case}

Let $u$ be a variable,
%\begin{equation}\label{PP}
\begin{equation}\label{Prels1}
U_i^+ = \frac{Y_i}{u-Y_i},
\qquad\hbox{and note that}\qquad
U_i^+U_{i+1}^+ = \frac{Y_iY_{i+1}}{u^2-Y_iY_{i+1}}(U_i^++U_{i+1}^++1).
\end{equation}
By the definition of $E_i$ in \eqref{rel:E_Defn}, 
$$(u-Y_{i+1}) T_i = T_i(u-Y_i) -(q-q^{-1})Y_{i+1}(1-E_i),$$
and, by \eqref{altEdefn},
 $$(u-Y_i)T_i = T_i(u-Y_{i+1}) + (q-q^{-1})(1-E_i)Y_{i+1},$$ 
 so that
\begin{align}\label{eq:TYgenfn}
T_i \frac{1}{u-Y_i} 
&=
\frac{1}{u-Y_{i+1}}T_i 
-(q-q^{-1}) \frac{Y_{i+1}}{u-Y_{i+1}} (1-E_i) \frac{1}{u-Y_i}, \quad\hbox{and}\\
\label{eq:TYp1genfn}
T_i \frac{1}{u-Y_{i+1}}
&= \frac{1}{u-Y_i}T_i + (q-q^{-1})  \frac{1}{u-Y_i} (1-E_i) \frac{Y_{i+1}}{u-Y_{i+1}}.
\end{align}
The relations
\begin{equation}\label{TP1}
\begin{split}T_iU_i^+
&=U_{i+1}^+T_i^{-1}
-(q-q^{-1})U_{i+1}^+(1-E_i)U_{i}^+\\
&=U_{i+1}^+ \left(T_i^{-1} - (q-q^{-1})(1-E_i)U_{i}^+\right),\quad\hbox{and}
\end{split}
\end{equation}
\begin{equation}\label{TP2}
\begin{split}
T_i^{-1}U_{i+1}^+
&=U_i^+T_i-(q-q^{-1})U_i^+E_iU_{i+1}^+ + (q-q^{-1})U_{i+1}^+U_i^+\\
&=U_i^+\left(T_i+(q-q^{-1})(1-E_i)U_{i+1}^+\right)
\end{split}
\end{equation}
are obtained by multiplying \eqref{eq:TYgenfn} and \eqref{eq:TYp1genfn} on the right (resp.\ left)
by $Y_i$ and using the relation $T_iY_i = Y_{i+1}T_i^{-1}$.
%\begin{equation}\label{TP3}
%U_i^+T_i = T_i^{-1}U_{i+1}^++(q-q^{-1})U_i^+E_iU_i^--(q-q^{-1})U_{i+1}^+U_i^+.
%\end{equation}

\begin{prop}\label{Intw} Let $Q = q-q^{-1}$.  Then, in the affine BMW algebra $W_{i+1}$,
\begin{align}
&\left(
E_i \frac{Y_{i+1}}{u-Y_{i+1}} - \frac{T_i}{Q} - \frac{Y_iY_{i+1}}{u^2-Y_iY_{i+1}}
\right)
\left(
E_i \frac{Y_i}{u-Y_i} + \frac{T_i^{-1}}{Q} - \frac{Y_iY_{i+1}}{u^2-Y_iY_{i+1}}
\right)\qquad\qquad\qquad \nonumber\\
&\qquad\qquad\qquad\qquad
= 
\frac{-(u^2-q^2Y_iY_{i+1})(u^2-q^{-2}Y_iY_{i+1})}{Q^2(u^2-Y_iY_{i+1})^2},
\qquad\hbox{and}
\label{Genfnlift}
\end{align}
\begin{align}
&\left(U_{i+1}^+  + \frac{T_i}{Q} - E_i\frac{Y_iY_{i+1}}{u^2-Y_iY_{i+1}} \right)
-Q^2 (U_i^+ + 1) \left(U_{i+1}^+ + \frac{T_i}{Q} -E_i\frac{Y_iY_{i+1}}{u^2-Y_iY_{i+1}} \right)  U_i^+
\label{Recursionlift} \\
&=
\left(T_i U_i^+T_i^{-1}
+ \frac{T_i}{Q} - E_i\frac{Y_iY_{i+1}}{u^2-Y_iY_{i+1}} \right)
-Q^2 U_{i+1}^+\left(E_{i}U_i^+E_i  +z\frac{E_i}{Q} 
- E_i\frac{Y_iY_{i+1}}{u^2-Y_iY_{i+1}}\right) (U_{i+1}^+ + 1).
\nonumber
\end{align}
\end{prop}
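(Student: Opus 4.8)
The plan is to transcribe the proof of Proposition~\ref{intw} into the quantum setting, using \eqref{Prels1}, \eqref{TP1}, and \eqref{TP2} in place of \eqref{prels1} and \eqref{prels3}, with the factor $Q=q-q^{-1}$ and the extra $+1$ in \eqref{Prels1} as the new features. Throughout write $P=\frac{Y_iY_{i+1}}{u^2-Y_iY_{i+1}}$. Since $Y_iY_{i+1}$ is symmetric, $P$ commutes with $U_i^+$, $U_{i+1}^+$ and (by \eqref{rel:Antisymmetry_T}) with $T_i$; by \eqref{rel:Unwrapping} one has $PE_i=E_iP=\frac{1}{u^2-1}E_i$, and by \eqref{rel:Untwisting} $T_iE_i=E_iT_i=z^{-1}E_i$ and $T_i^{-1}E_i=E_iT_i^{-1}=zE_i$. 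These facts play the roles that $e_i(y_i+y_{i+1})=0$ and $e_it_{s_i}=\epsilon e_i$ play in the degenerate case.

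For \eqref{Genfnlift}, set $A=\frac{T_i}{Q}+P$ and $B=E_iU_i^+ + \frac{T_i^{-1}}{Q}-P$, so that the left side is $(E_iU_{i+1}^+-A)B$. First I verify, from \eqref{TP1} and the product rule \eqref{Prels1}, the identity $U_{i+1}^+B=AU_i^++P$; here the surplus $+P$ is precisely the contribution of the $+1$ in \eqref{Prels1} and has no counterpart in the degenerate computation. Together with $AE_i=E_iA$ this gives
$$(E_iU_{i+1}^+-A)B=E_i(AU_i^++P)-AB=A(E_iU_i^+-B)+PE_i.$$
Since $E_iU_i^+-B=P-\frac{T_i^{-1}}{Q}$, I expand $A(P-\frac{T_i^{-1}}{Q})=-\left(\frac{T_i}{Q}+P\right)\left(\frac{T_i^{-1}}{Q}-P\right)$ and substitute $T_i-T_i^{-1}=Q(1-E_i)$ from \eqref{Skein}. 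The two $PE_i$ terms cancel, leaving $-\frac{1}{Q^2}+P+P^2$, which a short rational computation identifies with the right side of \eqref{Genfnlift}.

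For \eqref{Recursionlift} I mirror the second half of the proof of Proposition~\ref{intw}: multiply the mixed relation \eqref{TP2} on the left by $T_i$ (using $T_iT_i^{-1}=1$), then re-apply \eqref{TP1} and \eqref{TP2} to linearize the resulting $T_iU_i^+$ and $T_iU_{i+1}^+$ terms, combining the $U^+$-products via \eqref{Prels1} and $Y_iY_j=Y_jY_i$. Adding to both sides the quantum analogue of the correction term added in the degenerate proof produces the symmetric shape of \eqref{Recursionlift}. The key simplification on the right is that the boundary factor is exactly $B$: by \eqref{rel:Untwisting} and \eqref{rel:Unwrapping},
$$\left(E_iU_i^+ + \tfrac{T_i^{-1}}{Q}-P\right)E_i=E_iU_i^+E_i + z\tfrac{E_i}{Q}-E_iP,$$
where $z\frac{E_i}{Q}$ comes from $T_i^{-1}E_i=zE_i$; this is the affine counterpart of $(e_iu_i^++t_{s_i}-\frac{1}{2u-(y_i+y_{i+1})})e_i=e_iu_i^+e_i+\epsilon e_i-e_i\frac{1}{2u-(y_i+y_{i+1})}$ in the degenerate case.

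The main obstacle is bookkeeping rather than conceptual. Relative to the degenerate case there are three new features to track at once: the extra $+1$ in \eqref{Prels1}, the replacement of $t_{s_i}$ by the distinct $T_i$ and $T_i^{-1}$ (so that each use of \eqref{TP1} or \eqref{TP2} injects a factor $Q$, and the quadratic terms accumulate a $Q^2$), and the shifts $U_i^++1$, $U_{i+1}^++1$. The delicate point is to confirm that, after all substitutions, every spurious $PE_i$- and $E_i$-contribution cancels, so that \eqref{Genfnlift} collapses to the stated rational function and \eqref{Recursionlift} acquires exactly the factors $Q^2$, $(U_i^++1)$ and $(U_{i+1}^++1)$ and no others, in exact parallel with the degenerate identities \eqref{genfnlift} and \eqref{recursionlift}.
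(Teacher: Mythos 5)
Your proposal is correct and follows essentially the same route as the paper's proof: the same auxiliary elements $A=\frac{T_i}{Q}+P$ and $B=E_iU_i^++\frac{T_i^{-1}}{Q}-P$, the same intertwining identity $U_{i+1}^+B=AU_i^++P$ combined with $AE_i=E_iA$ for \eqref{Genfnlift}, and for \eqref{Recursionlift} the same scheme of multiplying \eqref{TP2} on the left by $T_i$, re-expanding via \eqref{TP1} and \eqref{TP2}, adding the correction term, and collapsing the right-hand boundary factor with $\bigl(E_iU_i^++\frac{T_i^{-1}}{Q}-P\bigr)E_i=E_iU_i^+E_i+z\frac{E_i}{Q}-E_iP$. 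The bookkeeping points you flag (the extra $+1$ in \eqref{Prels1}, the $Q^2$ factors, and the cancellation of the $PE_i$ terms) are exactly where the paper's computation lives, and they work out as you describe.
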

\begin{proof}  Putting \eqref{Prels1} into \eqref{TP1} says that if
$$A = \frac{T_i}{Q} + \frac{Y_iY_{i+1}}{u^2-Y_iY_{i+1}}
\quad\hbox{and}\quad
B= E_iU_i^+ + \frac{T_i^{-1}}{Q} - \frac{Y_iY_{i+1}}{u^2-Y_iY_{i+1}}
$$
then
$$AU_i^+ = U_{i+1}^+B - \frac{Y_iY_{i+1}}{u^2-Y_iY_{i+1}}.
\qquad\hbox{Next,}\quad
AE_i = E_iA
$$
follows from \eqref{rel:Untwisting} and \eqref{rel:Unwrapping}.
So
\begin{align*}
&\left(
E_i \frac{Y_{i+1}}{u-Y_{i+1}} - \frac{T_i}{Q} - \frac{Y_iY_{i+1}}{u^2-Y_iY_{i+1}}
\right)
\left(
E_i \frac{Y_i}{u-Y_i} + \frac{T_i^{-1}}{Q} - \frac{Y_iY_{i+1}}{u^2-Y_iY_{i+1}}
\right)
\\
&= E_i(U_{i+1}^+ B) - AB
= E_i\left(AU_i^+ + \frac{Y_iY_{i+1}}{u^2-Y_iY_{i+1}}\right) - AB
=A(E_iU_i^+-B) + E_i\frac{Y_iY_{i+1}}{u^2-Y_iY_{i+1}} \\
&= - \left(\frac{T_i}{Q} + \frac{Y_iY_{i+1}}{u^2-Y_iY_{i+1}}\right)
\left(\frac{T_i^{-1}}{Q}-\frac{Y_iY_{i+1}}{u^2-Y_iY_{i+1}}\right) 
+ E_i\frac{Y_iY_{i+1}}{u^2-Y_iY_{i+1}},
\end{align*}
and, by \eqref{Skein}, multiplying out the right hand side gives \eqref{Genfnlift}.

Rewrite $T_i^{-1}U_{i+1}^+=U_i^+T_i^{-1}+QU_i^+(1-E_i)(U_{i+1}^++1)$ as
$$
T_i^{-1}U_{i+1}^+-Q(U_{i+1}^++1)U_i^+
=U_i^+T_i^{-1}-QU_i^+E_i(U_{i+1}^++1),
$$
and multiply on the left by $T_i$ to get
\begin{equation}\label{BBstep1}
U_{i+1}^+-QT_i(U_{i+1}^++1)U_i^+
=T_iU_i^+T_i^{-1}-QT_iU_i^+E_i(U_{i+1}^++1).
\end{equation}
Then, since  $T_i = T_i^{-1} + Q(1-E_i)$, equations \eqref{TP2} and \eqref{TP1} imply
\begin{align*}
T_i(U_{i+1}^++1) = Q(U_i^++1)\left(\frac{T_i}{Q}+(1-E_i)U_{i+1}^+\right)
\quad\hbox{and} \quad
T_iU_i^+ = QU_{i+1}^+\left(\frac{T_i^{-1}}{Q}-(1-E_i)U_i^+\right),
\end{align*}
and so \eqref{BBstep1} is
\begin{align}
U_{i+1}^+
&-Q^2 (U_i^+ + 1) \left(\frac{T_i}{Q} + (1-E_i)U_{i+1}^+\right)  U_i^+ 
\label{BBstep2}\\
&=
T_i U_i^+T_i^{-1}-Q^2
U_{i+1}^+\left(\frac{T_i^{-1}}{Q} - (1-E_{i})U_i^+ \right)E_i (U_{i+1}^+ + 1).
\nonumber
\end{align}
Using \eqref{Prels1} and 
$$\hbox{adding}\quad
\frac{T_i}{Q}-E_i\frac{Y_iY_{i+1}}{u^2-Y_iY_{i+1}} 
- Q^2\frac{Y_iY_{i+1}}{u^2-Y_iY_{i+1}}(U_i^++1)E_i(U_{i+1}^++1)
\quad\hbox{to each side}
$$
of \eqref{BBstep2} gives
\begin{align*}
&U_{i+1}^+  + \frac{T_i}{Q} - E_i\frac{Y_iY_{i+1}}{u^2-Y_iY_{i+1}} 
-Q^2 (U_i^+ + 1) \left(U_{i+1}^+ + \frac{T_i}{Q} -E_i\frac{Y_iY_{i+1}}{u^2-Y_iY_{i+1}} \right)  U_i^+ \\
&\ =
T_i U_i^+T_i^{-1}
+ \frac{T_i}{Q} - E_i\frac{Y_iY_{i+1}}{u^2-Y_iY_{i+1}} 
-Q^2 U_{i+1}^+\left(E_{i}U_i^+ + \frac{T_i^{-1}}{Q} 
- \frac{Y_iY_{i+1}}{u^2-Y_iY_{i+1}}\right)E_i (U_{i+1}^+ + 1) \\
&\ =
T_i U_i^+T_i^{-1}
+ \frac{T_i}{Q} - E_i\frac{Y_iY_{i+1}}{u^2-Y_iY_{i+1}} 
-Q^2 U_{i+1}^+\left(E_{i}U_i^+E_i  +z\frac{E_i}{Q} 
- E_i\frac{Y_iY_{i+1}}{u^2-Y_iY_{i+1}}\right) (U_{i+1}^+ + 1),
\end{align*}
completing the proof of \eqref{Recursionlift}.
\end{proof}

Introduce notation $Z_{i-1}^{(\ell)}E_i$ and generating functions $Z_{i-1}^+E_i$
and $Z_{i-1}^-E_i$ by
\begin{align}
Z^+_{i-1}E_i 
&= \sum_{\ell \in \ZZ_{\ge 0}} Z_{i-1}^{(\ell )} E_iu^{-\ell } 
=E_i\left(\sum_{\ell \in \ZZ_{\ge 0}} Y_i^\ell  u^{-\ell }\right)E_i, 
=E_i\frac{1}{1-Y_i u^{-1}} E_i, 
\label{Z+defn}\\
Z^-_{i-1} E_i 
&= \sum_{\ell \in \ZZ_{\ge 0}} Z_{i-1}^{(-\ell )} E_iu^{-\ell }
=E_i\left(\sum_{\ell \in \ZZ_{\ge 0}} Y_i^{-\ell } u^{-\ell }\right)E_i
=E_i\frac{1}{1-Y_i^{-1}u^{-1}} E_i.
\label{Z-defn}
\end{align}
By \cite[Lemma 3.15(1)]{GH1}, or the identity \eqref{Zproduct} below,
$Z_{i-1}^{(\ell)}\in W_{i-1}$ for $\ell\in \ZZ$.
If
\begin{equation}
U_i^- = \frac{Y_i^{-1}}{u-Y_i^{-1}}
\qquad\hbox{then}\qquad
Z_{i-1}^{(0)} = 1 + \frac{z-z^{-1}}{q-q^{-1}},
\end{equation}
by the second relation in \eqref{SmashE}, and
\begin{equation}\label{EP}
E_i U_{i+1}^+ = E_i U_{i}^-,\qquad 
U_{i+1}^+ E_i= U_{i}^- E_i,\qquad
E_i U_i^{\pm} E_i =  (Z^{\pm}_{i-1} - Z_{i-1}^{(0)})E_i,
\end{equation}
where, for $i=1$, the last identity is a restatement of the first identity in \eqref{rel:Unwrapping}.
In the following theorem, the identity \eqref{Genfnadm} is equivalent to \cite[Lemma 2.8, parts (2)and (3)]{GH1} or \cite[Lemma 2.6(4)]{GH2} (see Remark \ref{admtoGH})
and the identity \eqref{Zrecursion}
is found in \cite[Lemma 7.4]{BB}.

\begin{thm} \label{Ziprop}  Let $Z_{i-1}^{(\ell)}$ and the generating functions
$Z_{i-1}^+$ and $Z_{i-1}^-$ be as defined in \eqref{Z+defn} and \eqref{Z-defn}.
Then $Z_{i-1}^{(\ell)}\in Z(W_{i-1})$,
\begin{align}
&\qquad\qquad\left(
Z_{i-1}^- - \frac{z}{q-q^{-1}} - \frac{u^2}{u^2-1}
\right)
\left(
Z_{i-1}^+ + \frac{z^{-1}}{q-q^{-1}} - \frac{u^2}{u^2-1}
\right)E_i
\nonumber\\
&\qquad\qquad\qquad\qquad\qquad\qquad\qquad\qquad\qquad
= \frac{-(u^2-q^2)(u^2-q^{-2})}{(u^2-1)^2(q-q^{-1})^2}E_i,
\label{Genfnadm}\\
&\left(
Z_i^+ +\frac{z^{-1}}{q-q^{-1}}-\frac{u^2}{u^2-1}\right)
E_{i+1} \nonumber \\
&\qquad=
\left(
Z_{i-1}^+ + \frac{z^{-1}}{q-q^{-1}}-\frac{u^2}{u^2-1}\right)
\frac
{(u-Y_i)^2(u-q^{-2}Y_i^{-1})(u-q^2Y_i^{-1})}
{(u-Y_i^{-1})^2(u-q^2Y_i)(u-q^{-2}Y_i)} E_{i+1},\ \hbox{and}
\label{Zrecursion}\\
&\left(Z_{k-1}^+ +\frac{z^{-1}}{q-q^{-1}}-\frac{u^2}{u^2-1}\right)E_{i+1} \nonumber \\
&\qquad=
\left(
Z_0^++\frac{z^{-1}}{q-q^{-1}}-\frac{u^2}{u^2-1}\right)
\left(
\prod_{i=1}^{k-1}
\frac
{(u-Y_i)^2(u-q^{-2}Y_i^{-1})(u-q^2Y_i^{-1})}
{(u-Y_i^{-1})^2(u-q^2Y_i)(u-q^{-2}Y_i)}\right)E_{i+1}.
\label{Zproduct} 
\end{align}
\end{thm}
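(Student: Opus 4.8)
The plan is to run the affine argument line for line in parallel with the proof of the degenerate Theorem~\ref{ziprop}, substituting the projector $E_i$ for $e_i$, the lift \eqref{Genfnlift} for \eqref{genfnlift}, the lift \eqref{Recursionlift} for \eqref{recursionlift}, and the sandwich relations \eqref{EP} for \eqref{prels2}. Throughout I write $Q=q-q^{-1}$. First I would dispose of centrality: the algebra $W_{i-1}$ is generated by $T_1,\dots,T_{i-2}$ and $Y_1,\dots,Y_{i-1}$ (together with the derived $E_1,\dots,E_{i-2}$), all of which involve strands disjoint from $\{i,i+1\}$ and hence commute with both $Y_i$ and $E_i$; they therefore commute with $E_iY_i^\ell E_i=Z_{i-1}^{(\ell)}E_i$, and since $Z_{i-1}^{(\ell)}\in W_{i-1}$ by \cite[Lemma 3.15(1)]{GH1} this forces $Z_{i-1}^{(\ell)}\in Z(W_{i-1})$, exactly as in the degenerate case.

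For \eqref{Genfnadm} I would right-multiply \eqref{Genfnlift} by $E_i$ and collapse each factor. The relations \eqref{rel:Untwisting} give $T_i^{\pm1}E_i=z^{\mp1}E_i$, the relations \eqref{rel:Unwrapping} give $Y_iY_{i+1}E_i=E_i$ so that $\frac{Y_iY_{i+1}}{u^2-Y_iY_{i+1}}E_i=\frac{1}{u^2-1}E_i$, and \eqref{EP} gives $E_iU_i^{\pm}E_i=(Z_{i-1}^{\pm}-Z_{i-1}^{(0)})E_i$ together with $E_iU_{i+1}^+E_i=E_iU_i^-E_i$. Hence the second factor of \eqref{Genfnlift} becomes $\bigl(Z_{i-1}^+-Z_{i-1}^{(0)}+\tfrac{z}{Q}-\tfrac{1}{u^2-1}\bigr)E_i$ and the first becomes $\bigl(Z_{i-1}^--Z_{i-1}^{(0)}-\tfrac{z^{-1}}{Q}-\tfrac{1}{u^2-1}\bigr)E_i$. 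Both coefficients are central in $W_{i-1}$, so they slide past the intervening operators, and substituting $Z_{i-1}^{(0)}=1+\tfrac{z-z^{-1}}{q-q^{-1}}$ (from \eqref{SmashE}) rewrites them as the two factors $Z_{i-1}^++\tfrac{z^{-1}}{q-q^{-1}}-\tfrac{u^2}{u^2-1}$ and $Z_{i-1}^--\tfrac{z}{q-q^{-1}}-\tfrac{u^2}{u^2-1}$ of \eqref{Genfnadm}; the right side of \eqref{Genfnlift} collapses (again by $Y_iY_{i+1}E_i=E_i$) to $\frac{-(u^2-q^2)(u^2-q^{-2})}{Q^2(u^2-1)^2}E_i$, which is \eqref{Genfnadm}.

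For \eqref{Zrecursion} I would multiply \eqref{Recursionlift} on the left and on the right by $E_{i+1}$. Using $E_{i+1}E_iE_{i+1}=E_{i+1}$ from \eqref{SmashE}, the relation $E_{i+1}T_iE_{i+1}=zE_{i+1}$ from \eqref{rel:Untwisting}, and the tangle relations $E_{i+1}T_iT_{i+1}=E_{i+1}E_i$ and $T_{i+1}^{-1}T_i^{-1}E_{i+1}=E_iE_{i+1}$ displayed after \eqref{SmashE}, the conjugated blocks collapse: inserting $T_{i+1}^{\pm1}$ (which commute with $U_i^+$) yields
\[
E_{i+1}T_iU_i^+T_i^{-1}E_{i+1}=E_{i+1}E_iU_i^+E_iE_{i+1}=(Z_{i-1}^+-Z_{i-1}^{(0)})E_{i+1},
\]
while \eqref{EP} gives $U_{i+1}^+E_i=U_i^-E_i$ and $E_iU_{i+1}^+=E_iU_i^-$, so the block $E_{i+1}U_{i+1}^+E_i(U_{i+1}^++1)E_{i+1}$ collapses to $U_i^-(U_i^-+1)E_{i+1}$. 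Carrying the central coefficients and the $\tfrac{zE_i}{Q}$ normalization through as in \eqref{Genfnadm}, both sides reduce to a common central multiple of $E_{i+1}$, and the sandwiched \eqref{Recursionlift} becomes
\[
\Lambda_i\,\bigl(1-Q^2(U_i^++1)U_i^+\bigr)E_{i+1}
=\Lambda_{i-1}\,\bigl(1-Q^2(U_i^-+1)U_i^-\bigr)E_{i+1},
\]
where $\Lambda_j=Z_j^++\tfrac{z^{-1}}{q-q^{-1}}-\tfrac{u^2}{u^2-1}$.

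The identity \eqref{Zrecursion} then follows by dividing by the invertible rational factor $1-Q^2(U_i^++1)U_i^+$ in $Y_i$ and evaluating the ratio: since $U_i^{\pm}+1=\frac{u}{u-Y_i^{\pm1}}$ one finds $1-Q^2(U_i^{\pm}+1)U_i^{\pm}=\frac{(u-q^2Y_i^{\pm1})(u-q^{-2}Y_i^{\pm1})}{(u-Y_i^{\pm1})^2}$, using $Q^2+2=q^2+q^{-2}$, so the ratio is exactly $\frac{(u-Y_i)^2(u-q^{-2}Y_i^{-1})(u-q^2Y_i^{-1})}{(u-Y_i^{-1})^2(u-q^2Y_i)(u-q^{-2}Y_i)}$. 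Finally \eqref{Zproduct} follows by iterating \eqref{Zrecursion}, exactly as \eqref{grproduct} follows from \eqref{grrecursion}. I expect the main obstacle to be the middle step: faithfully tracking the shift terms $(U_i^{\pm}+1)$ and the $Q^2$ prefactors, which originate in the extra $+1$ of the affine product rule \eqref{Prels1} and have no degenerate analogue, so that each side genuinely collapses to $\Lambda_j\bigl(1-Q^2(U_i^{\pm}+1)U_i^{\pm}\bigr)E_{i+1}$; once that shape is secured the rational simplification is routine.
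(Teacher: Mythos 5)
Your proposal is correct and follows essentially the same route as the paper's proof: right-multiply \eqref{Genfnlift} by $E_i$ and substitute $Z_{i-1}^{(0)}=1+\frac{z-z^{-1}}{q-q^{-1}}$ to get \eqref{Genfnadm}; sandwich \eqref{Recursionlift} between two copies of $E_{i+1}$, collapse $E_{i+1}T_iU_i^+T_i^{-1}E_{i+1}=E_{i+1}E_iU_i^+E_iE_{i+1}$ and the $U_{i+1}^+\cdots(U_{i+1}^++1)$ block via \eqref{EP} to reach $\Lambda_i\bigl(1-Q^2U_i^+(U_i^++1)\bigr)E_{i+1}=\Lambda_{i-1}\bigl(1-Q^2U_i^-(U_i^-+1)\bigr)E_{i+1}$; then simplify the rational ratio and iterate for \eqref{Zproduct}. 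Your factorization $1-Q^2(U_i^{\pm}+1)U_i^{\pm}=\frac{(u-q^2Y_i^{\pm1})(u-q^{-2}Y_i^{\pm1})}{(u-Y_i^{\pm1})^2}$ matches the computation in the paper exactly.
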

\begin{proof}
Since the generators $T_1,\ldots, T_{i-2}$, $E_1,\ldots, E_{i-2}$ and $Y_1,\ldots, Y_{i-1}$
of $W_{i-1}$ all commute with $E_i$ and $Y_i$, it follows that $Z_{i-1}^{(\ell)}\in Z(W_{i-1})$.

Multiply \eqref{Genfnlift} on the right by $E_i$ and use
$Z_{i-1}^{(0)} = 1+ (z-z^{-1})/(q-q^{-1})$ to get \eqref{Genfnadm}.

Multiply \eqref{Recursionlift} on the left and right by $E_{i+1}$ and use the relations in \eqref{rel:Untwisting}, \eqref{rel:Unwrapping}, \eqref{SmashE}, and 
$$E_{i+1}T_iU_i^+ T_i^{-1}E_{i+1}
=E_{i+1}T_iT_{i+1}U_i^+ T_{i+1}^{-1}T_i^{-1}E_{i+1}
=E_{i+1}E_iU_i^+ E_iE_{i+1},
$$
to obtain
\begin{align*}
\bigg(Z_i^+&-Z_i^{(0)}+\frac{z}{q-q^{-1}} - \frac{1}{u^2-1}\bigg)\left(1- (q-q^{-1})^2U_i^+(U_i^++1) \right)E_{i+1}\\ 
&=\left(Z_{i-1}^+-Z_{i-1}^{(0)}+\frac{z}{q-q^{-1}} - \frac{1}{u^2-1}\right)\left(1- (q-q^{-1})^2U_i^-(U_i^-+1)\right)E_{i+1} .
\end{align*}
Then \eqref{Zrecursion} follows from
\begin{align*}
&\frac{1-(q-q^{-1})^2U_i^-(U_i^-+1)}{1-(q-q^{-1})^2U_i^+(U_i^++1)}
= \frac{1-(q-q^{-1})^2\frac{Y_i^{-1}}{u-Y_i^{-1}}\left(\frac{Y_i^{-1}}{u-Y_i^{-1}}+1\right)}
{1-(q-q^{-1})^2\frac{Y_i}{u-Y_i}\left(\frac{Y_i}{u-Y_i}+1\right)} \\
&= \frac{((u-Y_i^{-1})^2-(q-q^{-1})^2Y_i^{-1}u)\frac{1}{(u-Y_i^{-1})^2}}
{((u-Y_i)^2-(q-q^{-1})^2Y_iu)\frac{1}{(u-Y_i)^2}} 
= \frac{(u-q^{-2}Y_i^{-1})(u-q^2Y_i^{-1})(u-Y_i)^2}
{(u-q^{-2}Y_i)(u-q^2Y_i)(u-Y_i^{-1})^2} 
\end{align*}
and $Z_i^{(0)}=Z_{i-1}^{(0)} = 1+ (z-z^{-1})/(q-q^{-1})$.  Finally,
relation \eqref{Zproduct} follows, by induction, from \eqref{Zrecursion}.
\end{proof}

\begin{remark}
Taking the coefficient of $u^{-(\ell+1)}$
on each side of \eqref{eq:TYgenfn} and \eqref{eq:TYp1genfn} gives
\begin{align}
\label{eq:TYi}
T_iY_i^\ell &= Y_{i+1}^\ell T_i - (q-q^{-1})(Y_{i+1}^{\ell}(1-E_i)+Y_{i+1}^{\ell-1}(1-E_i)Y_i
+\cdots+Y_{i+1}(1-E_i)Y_i^{\ell-1}), \\
\label{eq:TYip1}
T_iY_{i+1}^\ell &= Y_i^\ell T_i + (q-q^{-1})(Y_i^{\ell-1}(1-E_i)Y_{i+1}+Y_i^{\ell-2}(1-E_i)Y_{i+1}^2
+\cdots +(1-E_i)Y_{i+1}^{\ell}),
\end{align}
respectively, for $\ell\in \ZZ_{\ge 0}$.  Therefore,
\begin{align}
T_iY_i^{-\ell} 
	 &=Y_{i+1}^{-\ell}T_i 
	+  (q-q^{-1})\left(Y_{i+1}^{-(\ell-1)} (1 - E_i) Y_i^{-1} 
	%+ Y_{i+1}^{-(\ell-2)} (1 - E_i) Y_i^{-2}
	+ \dots +  (1 - E_i) Y_i^{-\ell} \right),\label{eq:TYineg}\\
T_i Y_{i+1}^{-\ell} 
	&= Y_{i}^{-\ell} T_i 
	-(q-q^{-1})\left(Y_i^{-\ell}(1 - E_i)
	%+Y_i^{-(\ell-1)}(1 - E_i)Y_{i+1}^{-1}
	+\dots+ Y_i^{-1}(1 - E_i)Y_{i+1}^{-(\ell-1)}\right). \label{eq:TYip1neg}
\end{align}
\end{remark}

\begin{remark}\label{admtoGH} 
Combining \eqref{Genfnadm} and \eqref{Zproduct} yields a formula for 
$Z_{k-1}^-$ in terms of $Z_0^+$ and $Y_1, Y_2, \ldots, Y_{k-1}$.
%\end{remark} 
%
%\begin{remark} In identity \eqref{Zrecursion}, we have
%$Z_{i-1}^+ - Z_{i-1}^{(0)}+\frac{z}{q-q^{-1}}-\frac{1}{u^2-1}$, which
%is reconciled with \cite[(11)]{BB} by using 
%$$Z_{i-1}^+ - Z_{i-1}^{(0)}+\frac{z}{q-q^{-1}}-\frac{1}{u^2-1}
%=Z_{i-1}^+ - \left(1+\frac{z-z^{-1}}{q-q^{-1}}\right)+\frac{z}{q-q^{-1}}-\frac{1}{u^2-1}
%=Z_{i-1}^+ + \frac{z^{-1}}{q-q^{-1}}-\frac{u^2}{u^2-1}.
%$$
%\end{remark}
%
%
%
%
%\begin{remark}
Using $Z_{i-1}^{(0)} = 1+\frac{z-z^{-1}}{q-q^{-1}}$, rewrite  \eqref{Genfnadm} as
\begin{align}
\Big(z&Z^{-}_{i-1} -  z^{-1}Z^{+}_{i-1} - (z-z^{-1})Z_{i-1}^{(0)} \Big)E_i \nonumber \\
&= (q-q^{-1}) \left(\frac{1}{u^2 - 1}(Z_{i-1}^++Z_{i-1}^- -Z_{i-1}^{(0)})
- \left(Z^{-}_{i-1} - Z_{i-1}^{(0)}\right) \left(Z^{+}_{i-1} - Z_{i-1}^{(0)}\right)\right)E_i,
\end{align}
and take the coefficient of $u^{-\ell}$ in \eqref{Genfnadm} to get
\begin{equation}\label{Admissibility}
\begin{array}{l} 
\left(zZ_{i-1}^{(-{\ell})} - z^{-1}Z_{i-1}^{({\ell})}\right)E_i \\
\qquad\qquad= (q-q^{-1})
\left(
\begin{array}{l}
Z_{i-1}^{({\ell}-2)}+Z_{i-1}^{({\ell}-4)}+\cdots+Z_{i-1}^{(-({\ell}-2))} \\
- \left(Z_{i-1}^{({\ell}-1)}Z_{i-1}^{(-1)}+Z_{i-1}^{({\ell}-2)}Z_{i-1}^{(-2)}
+\cdots +Z_{i-1}^{(1)}Z_{i-1}^{(-({\ell}-1))}\right)
\end{array}\right)E_i,
\end{array}
\end{equation}
from \cite[Lemma 2.6(4)]{GH2}.
\end{remark}

%%%%%%%%%%%%%%%%%%%%%%%%%%%%%%%%%%%%
%%%%%%%%%% SEC: CENTER %%%%%%%%%%%%%%%%%%
%%%%%%%%%%%%%%%%%%%%%%%%%%%%%%%%%%%%
\section{The center of the affine and degenerate affine BMW algebras}
\label{sec:center}

In this section, we identify the center of $\cW_k$ and $W_k$. Both centers arise as algebras 
of symmetric functions with a ``cancellation property'' (in the language of \cite{Pr}) or ``wheel condition'' (in the language of \cite{FJ+}). In the degenerate case, $Z(\cW_k)$ is the ring of symmetric functions in $y_1, \dots, y_k$ with the $Q$-cancellation property of Pragacz. By \cite[Theorem 2.11(Q)]{Pr},  this is the same ring as the ring generated by the odd power sums, which is the way that Nazarov \cite{Naz} identified $Z(\cW_k)$. 

The cancellation property in the case of $W_k$ is analogous, exhibiting the center of the affine BMW algebra $Z(W_k)$ as a subalgebra of  the ring of symmetric Laurent polynomials.
At the end of this section, in an attempt to make the theory for the affine BMW algebra 
completely analogous to that for the degenerate affine BMW algebra, we have formulated an alternate description of $Z(W_k)$ as a ring generated by ``negative'' power sums.

%%%%%%%%%%%%%%%%%%%%%
%%%% SUB: DEG BASIS %%%%%%%%
%%%%%%%%%%%%%%%%%%%%%
\subsection{A basis of $\cW_k$}

A \emph{(Brauer) diagram} on $k$ dots is a graph with $k$ dots in the top row, $k$ dots in the bottom
row and $k$ edges pairing the dots.  For example,
\begin{equation}\label{diagex}
d = \beginpicture
\setcoordinatesystem units <0.5cm,0.2cm> % sets scale
\setplotarea x from 0.5 to 7, y from 0 to 3    % sets plot size up
\linethickness=0.5pt                        % sets line thickness
\put{$\bullet$} at 1 -1 \put{$\bullet$} at 1 2
\put{$\bullet$} at 2 -1 \put{$\bullet$} at 2 2
\put{$\bullet$} at 3 -1 \put{$\bullet$} at 3 2
\put{$\bullet$} at 4 -1 \put{$\bullet$} at 4 2
\put{$\bullet$} at 5 -1 \put{$\bullet$} at 5 2
\put{$\bullet$} at 6 -1 \put{$\bullet$} at 6 2
\put{$\bullet$} at 7 -1 \put{$\bullet$} at 7 2
\plot 7 2 4 -1 /
\plot 6 2 6 -1 /
\plot 2 2 1 -1 /
\setquadratic
\plot 1 2 2 1.25 3 2 /
\plot 4 2 4.5 1.25 5 2 /
\plot 2 -1 4.5 0.5 7 -1 /
\plot 3 -1 4 -.25 5 -1 /
\endpicture
\qquad\hbox{
is a Brauer diagram on $7$ dots.}
\end{equation}
Number the vertices of the top row, left to right,  with $1,2,\ldots, k$ and the vertices
in the bottom row, left to right, with $1', 2', \ldots, k'$ so that the diagram in 
\eqref{diagex} can be written
$$d = (13)(21')(45)(66')(74')(2'7')(3'5').$$
The \emph{Brauer algebra}  is the vector space 
\begin{equation}\label{Brauerbasis}
\cW_{1,k}\quad\hbox{with basis }\qquad	D_k = \{ \text{ diagrams on $k$ dots } \},
\end{equation}
and product given by stacking diagrams and changing each closed loop to $x$.  For example,
$$\hbox{if}\qquad
{\beginpicture
\setcoordinatesystem units <0.5cm,0.2cm> % sets scale
\setplotarea x from 0 to 7, y from 0 to 3    % sets plot size up
\linethickness=0.5pt
\put{$d_1 = $} at -.5 0
\put{$\bullet$} at 1 -1.5 \put{$\bullet$} at 1 1.5
\put{$\bullet$} at 2 -1.5 \put{$\bullet$} at 2 1.5
\put{$\bullet$} at 3 -1.5 \put{$\bullet$} at 3 1.5
\put{$\bullet$} at 4 -1.5 \put{$\bullet$} at 4 1.5
\put{$\bullet$} at 5 -1.5 \put{$\bullet$} at 5 1.5
\put{$\bullet$} at 6 -1.5 \put{$\bullet$} at 6 1.5
\put{$\bullet$} at 7 -1.5 \put{$\bullet$} at 7 1.5
\setquadratic
\plot 1 1.5 2.5 0.75 4 1.5 /
\plot 5 1.5 5.5 0.75 6 1.5 /
\plot 2 -1.5 2.5 -.75 3 -1.5 /
\plot 5 -1.5 6 -0.25 7 -1.5 /
\setlinear
\plot 2 1.5 1 -1.5 /
\plot 3 1.5 4 -1.5 /
\plot 7 1.5 6 -1.5 /
\endpicture}
\quad\hbox{and}\quad
{\beginpicture
\setcoordinatesystem units <0.5cm,0.2cm> % sets scale
\setplotarea x from 0 to 7, y from 0 to 3    % sets plot size up
\linethickness=0.5pt
\put{$d_2 = $} at -.5 0
\put{$\bullet$} at 1 1.5 \put{$\bullet$} at 1 -1.5
\put{$\bullet$} at 2 1.5 \put{$\bullet$} at 2 -1.5
\put{$\bullet$} at 3 1.5 \put{$\bullet$} at 3 -1.5
\put{$\bullet$} at 4 1.5 \put{$\bullet$} at 4 -1.5
\put{$\bullet$} at 5 1.5 \put{$\bullet$} at 5 -1.5
\put{$\bullet$} at 6 1.5 \put{$\bullet$} at 6 -1.5
\put{$\bullet$} at 7 1.5 \put{$\bullet$} at 7 -1.5
\setquadratic
\plot 2 1.5 3 0.25 4 1.5 /
\plot 5 1.5 6 0.25 7 1.5 /
%\plot 2 -1.5 4.5 -0.5 7 -1.5 /
%\plot 4 -1.5 4.5 -1 5 -1.5 /
\plot 5 -1.5 5.5 -1 6 -1.5 /
\plot 3 -1.5 5.5 -.2 7 -1.5 /
\setlinear
\plot 1 1.5 1 -1.5 /
\plot 3 1.5 4 -1.5 /
\plot 6 1.5 2 -1.5 /
\endpicture}
\quad\hbox{then}
$$
\medskip
\begin{equation}
{\beginpicture
\setcoordinatesystem units <0.4cm,0.15cm> % sets scale
\setplotarea x from 0 to 7, y from 0 to 3    % sets plot size up
\linethickness=0.5pt
%\put{$d_1 = $} at -.5 2.5
%\put{$d_2 = $} at -.5 -2.5
\put{$d_1d_2 = $} at -.75 0
\put{$\bullet$} at 1 1 \put{$\bullet$} at 1 4
\put{$\bullet$} at 2 1 \put{$\bullet$} at 2 4
\put{$\bullet$} at 3 1 \put{$\bullet$} at 3 4
\put{$\bullet$} at 4 1 \put{$\bullet$} at 4 4
\put{$\bullet$} at 5 1 \put{$\bullet$} at 5 4
\put{$\bullet$} at 6 1 \put{$\bullet$} at 6 4
\put{$\bullet$} at 7 1 \put{$\bullet$} at 7 4
\put{$\bullet$} at 1 -1 \put{$\bullet$} at 1 -4
\put{$\bullet$} at 2 -1 \put{$\bullet$} at 2 -4
\put{$\bullet$} at 3 -1 \put{$\bullet$} at 3 -4
\put{$\bullet$} at 4 -1 \put{$\bullet$} at 4 -4
\put{$\bullet$} at 5 -1 \put{$\bullet$} at 5 -4
\put{$\bullet$} at 6 -1 \put{$\bullet$} at 6 -4
\put{$\bullet$} at 7 -1 \put{$\bullet$} at 7 -4
\setquadratic
\plot 1 4 2.5 3 4 4 /
\plot 5 4 5.5 3.25 6 4 /
\plot 2 1 2.5 1.75 3 1 /
\plot 5 1 6 1.75 7 1 /
\plot 2 -1 3 -1.75 4 -1 /
\plot 5 -1 6 -1.75 7 -1 /
\plot 3 -4 5.3 -2.8 7 -4 /
%\plot 4 -4 4.5 -3.5 5 -4 /
\plot 5 -4 5.5 -3.5 6 -4 /
\setlinear
\plot 2 4 1 1 /
\plot 3 4 4 1 /
\plot 7 4 6 1 /
\plot 1 -1 1 -4 /
\plot 3 -1 4 -4 /
\plot 6 -1 2 -4 /
%\plot 6 -1 7 -4 /
\setdashpattern <.03cm,.03cm>
\plot 1 1 1 -1 /
\plot 2 1 2 -1 /
\plot 3 1 3 -1 /
\plot 4 1 4 -1 /
\plot 5 1 5 -1 /
\plot 6 1 6 -1 /
\plot 7 1 7 -1 /
\endpicture}
~=
~~x~ {\beginpicture
\setcoordinatesystem units <0.4cm,0.25cm> % sets scale
\setplotarea x from 1 to 7, y from 0 to 3    % sets plot size up
\linethickness=0.5pt                        % sets line thickness
\put{$\bullet$} at 1 -1 \put{$\bullet$} at 1 2
\put{$\bullet$} at 2 -1 \put{$\bullet$} at 2 2
\put{$\bullet$} at 3 -1 \put{$\bullet$} at 3 2
\put{$\bullet$} at 4 -1 \put{$\bullet$} at 4 2
\put{$\bullet$} at 5 -1 \put{$\bullet$} at 5 2
\put{$\bullet$} at 6 -1 \put{$\bullet$} at 6 2
\put{$\bullet$} at 7 -1 \put{$\bullet$} at 7 2
\plot 2 2 1 -1 /
\plot 3 2 4 -1 /
\plot 7 2 2 -1 /
\setquadratic
\plot 1 2 2.5 1.25 4 2 /
%\plot 4 2 4.5 1.25 5 2 /
\plot 5 2 5.5 1.5 6 2 /
\plot 3 -1 5.5 0 7 -1 /
%\plot 4 -1 4.5 -.5 5 -1 /
\plot 5 -1 5.5 -.5 6 -1 /
\endpicture}.
\end{equation}
%A \emph{horizontal edge} is an edge pairing a top dot
%with a top dot, or a bottom dot with a bottom dot.
The Brauer algebra
is generated by
\begin{equation}
e_{i} =
{\beginpicture
\setcoordinatesystem units <0.5cm,0.2cm> % sets scale
\setplotarea x from 2 to 5.5, y from -1 to 4   % sets plot size up
\linethickness=0.5pt                        % sets line thickness
\put{$\bullet$} at 2 -1 \put{$\bullet$} at 2 2 \plot 2 -1 2 2 /
\put{$\cdots$} at 3 .5
\put{$\bullet$} at 4 -1  \put{$\bullet$} at 4 2 \plot 4 -1 4 2 /
\put{$\bullet$} at 5 -1 \put{$\bullet$} at 5 2
\put{$\bullet$} at 6 -1 \put{$\bullet$} at 6 2
\put{$\scriptstyle{i}$}[b] at 5 3.45
\put{$\scriptstyle{i+1}$}[b] at 6 3.5
\put{$\bullet$} at 7 -1  \put{$\bullet$} at 7 2 \plot 7 -1 7 2 /
\put{$\cdots$} at 8 .5
\put{$\bullet$} at 9 -1  \put{$\bullet$} at 9 2 \plot 9 -1 9 2 /
\setquadratic
\plot 5 2 5.5 1.5 6 2 /
\plot 5 -1 5.5 -.5 6 -1 /
\endpicture} ,
\qquad\qquad
s_i = %\epsilon t_{s_i} =
{\beginpicture
\setcoordinatesystem units <0.5cm,0.2cm> % sets scale
\setplotarea x from 1 to 5.5, y from -1 to 4   % sets plot size up
\linethickness=0.5pt                        % sets line thickness
\put{$\bullet$} at 1 -1 \put{$\bullet$} at 1 2 \plot 1 -1 1 2 /
\put{$\cdots$} at 2 .5
\put{$\bullet$} at 3 -1  \put{$\bullet$} at 3 2 \plot 3 -1 3 2 /
\put{$\bullet$} at 4 -1 \put{$\bullet$} at 4 2
\put{$\bullet$} at 5 -1 \put{$\bullet$} at 5 2
\put{$\scriptstyle{i}$}[b] at 4 3.5
\put{$\scriptstyle{i+1}$}[b] at 5 3.45
\put{$\bullet$} at 6 -1 \put{$\bullet$} at 6 2  \plot 6 -1 6 2 /
\put{$\cdots$} at 7 .5
\put{$\bullet$} at 8 -1 \put{$\bullet$} at 8 2  \plot 8 -1 8 2 /
\plot 4 2 5 -1 /
\plot 5 2 4 -1 /
\endpicture},
\qquad\quad
1\le i\le k-1.
\end{equation}
Setting 
$$
x = z_0^{(0)}
\qquad\hbox{and}\qquad 
s_i=\epsilon t_{s_i}
$$ realizes the Brauer algebra 
as a subalgebra of the degenerate affine BMW algebra $\cW_k$. 
The Brauer algebra is also the quotient of $\cW_k$ by $y_1 = 0$ and, hence,
can be viewed as the degenerate cyclotomic BMW algebra $\cW_{1,k}(0)$. 

%%%%%%
\begin{thm}
\label{thm:W_basis}
Let $\cW_k$ be the degenerate affine BMW algebra and let 
$\cW_{r,k}(b_1,\ldots, b_r)$ be the degenerate cyclotomic BMW algebra as defined in
\eqref{rel:untwisting}-\eqref{rel:unwrapping} and \eqref{cycrelation}, respectively. 
For $n_1,\ldots, n_k\in \ZZ_{\geq 0}$ and a diagram $d$ on $k$ dots let
$$d^{n_1,\ldots, n_k} 
= y_{i_1}^{n_1}\cdots y_{i_\ell}^{n_\ell} d y_{i_{\ell+1}}^{n_{\ell+1}}\cdots y_{i_k}^{n_k},$$
where, in the lexicographic ordering of the edges $(i_1,j_1), \ldots, (i_k,j_k)$ of $d$, 
$i_1,\ldots, i_\ell$ are in the top row of $d$ and $i_{\ell+1}, \ldots, i_k$
are in the bottom row of $d$.  Let $D_k$ be the set of diagrams on $k$ dots, as in \eqref{Brauerbasis}.
%\begin{enumerate}
\item[(a)] 
%\cite[Theorem 4.6]{Naz} 
If $\kappa_0,\kappa_1\in C$ and 
\begin{equation}\label{admissibility}
\left( z_{0}(-u) - \big( \hbox{$\frac12$} + \epsilon\,u \big) \right)
\left( z_{0}(u) - \big( \hbox{$\frac12$} - \epsilon\,u \big) \right)
=
\big( \hbox{$\frac12$} - \epsilon\,u \big)\big( \hbox{$\frac12$} + \epsilon\,u \big)
\end{equation}
then $\{d^{n_1,\ldots, n_k}\ |\ d\in D_k,\  n_1,\ldots, n_k\in \ZZ_{\geq 0} \}$
is a $C$-basis of $\cW_k$.
\item[(b)] 
% \cite[Prop.\ 2.15]{AMR} 
If $\kappa_0,\kappa_1\in C$, \eqref{admissibility} holds, and 
\begin{equation}\label{cycadmissibility}
\left(z_0(u) + u-\hbox{$\frac12$}\right)
= (u-\hbox{$\frac12$}(-1)^r)\left(\prod_{i=1}^r \frac{u+b_i}{u-b_i}\right)
\end{equation}
then $\{d^{n_1,\ldots, n_k}\ |\ d\in D_k,\  0\le n_1,\ldots, n_k\le r-1 \}$
is a $C$-basis of $\cW_{r,k}(b_1, \dots, b_r)$.
%\end{enumerate}
 \end{thm}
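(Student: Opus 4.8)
The plan is to reduce \textbf{Theorem~\ref{thm:W_basis}} to two assertions for each of (a) and (b): a \emph{spanning} statement, which bounds $\cW_k$ (resp.\ $\cW_{r,k}$) above as a $C$-module, and a \emph{linear independence} statement, which is the real content. Independence will be obtained by induction on $k$ through a Jones basic construction built from $e_{k-1}$, and the admissibility conditions \eqref{admissibility} and \eqref{cycadmissibility} enter exactly as the nondegeneracy hypothesis that makes this construction faithful; without them the relation $e_1y_1^\ell e_1=z_0^{(\ell)}e_1$ forces $e_1=0$ (cf.\ the first Remark of Section~\ref{s:degenaff}). The cyclotomic case (b) is finite-dimensional and yields a clean dimension count, while (a) runs the same machinery over an infinite-rank base.

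\textbf{Spanning.} First I would show that every element of $\cW_k$ is a $C$-linear combination of the $d^{n_1,\ldots,n_k}$. Starting from an arbitrary word in $t_{s_i},e_i,y_i$, I use \eqref{eq:tyi} and \eqref{eq:tyip1} to slide each $y_i$ outward along its strand to a top or bottom endpoint, every such move producing only correction terms of strictly smaller $y$-degree carrying a factor $(1-e_i)$. The diagrammatic part is collapsed to a single Brauer diagram times a scalar using \eqref{rel:skein}, \eqref{tangle1} and \eqref{tangle2}, and any power of $y$ trapped on a cup or cap is evaluated by the unwrapping relation $e_1y_1^\ell e_1=z_0^{(\ell)}e_1$ of \eqref{rel:unwrapping} and its conjugates. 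An induction on total $y$-degree then brings every word into normal form; in the cyclotomic case \eqref{cycrelation} further truncates the exponents to $0\le n_i\le r-1$. This produces the upper bounds $\dim_C\cW_{r,k}\le r^k(2k-1)!!$ and its affine analogue.

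\textbf{Independence (the main obstacle).} For the lower bound I would induct on $k$ using the structural facts recorded in Section~\ref{s:degenaff}: the inductive presentation
\[
\cW_k=\cW_{k-1}e_{k-1}\cW_{k-1}+\cW_{k-1}t_{s_{k-1}}\cW_{k-1}+\textstyle\sum_{\ell\in\ZZ_{\ge0}}\cW_{k-1}y_k^\ell\cW_{k-1},
\]
the identity $e_k\cW_ke_k=\cW_{k-1}e_k$, and the $(\cW_{k-1},\cW_{k-1})$-bimodule isomorphism $b\mapsto e_kbe_k$. These exhibit $\cW_k$ as a Jones basic construction on $\cW_{k-1}$, whose faithfulness is governed by a conditional-expectation-type map whose nondegeneracy is precisely the admissibility relation. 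The base case is $k=1$, where $\cW_1=C[y_1]$ is free by Remark~\ref{polysubalg} and \cite[Theorem 3.2.2]{Kl}. Matching the resulting decomposition against the spanning set then forces the $d^{n_1,\ldots,n_k}$ to be independent, attaining the bounds. The crux is verifying that the bubble-evaluation $e_1y_1^\ell e_1=z_0^{(\ell)}e_1$ introduces no unforeseen relations among the $d^{\mathbf n}$; this is exactly where \eqref{admissibility} (resp.\ \eqref{cycadmissibility}) does its work, and it is the content of the basis theorem of \cite{AMR}, into whose hypotheses these conditions translate.

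For (b) the count closes on $r^k(2k-1)!!$ because the bimodule reduction multiplies the rank of $\cW_{r,k-1}$ by $(2k-1)r$ at each step, while for (a) the same reduction exhibits $\cW_k$ as a free $C$-module on $\{d^{\mathbf n}\}$ of infinite rank. As a consistency check, part (b) can also be recovered from (a): imposing \eqref{cycrelation} and invoking \eqref{cycadmissibility} truncates the exponents to $0\le n_i\le r-1$ without further collapse; and conversely, when the parameters are of cyclotomic type, independence of any finite subfamily of the $d^{\mathbf n}$ in (a) may be detected after projection to a quotient $\cW_{r,k}$ with $r$ larger than every exponent present, where (b) applies. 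Together with the spanning statement this yields the asserted $C$-bases.
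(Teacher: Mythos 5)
The paper does not give a self-contained proof of Theorem \ref{thm:W_basis}: part (a) is attributed to \cite[Theorem 4.6]{Naz} (see also \cite[Theorem 2.12]{AMR}) and part (b) to \cite[Prop.\ 2.15 and Theorem 5.5]{AMR}, the only indication of proof being the straightening identities \eqref{yd-vert-terms} and \eqref{yd-horiz-terms} highlighted for the spanning step. Your spanning argument is an elaboration of exactly that remark, and at the crux of linear independence you likewise defer to the basis theorem of \cite{AMR}, so in substance your proposal follows the same route as the paper.

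The one point to be careful about is your independence induction. The structural facts you invoke --- the inductive presentation of $\cW_k$, the identity $e_k\cW_ke_k=\cW_{k-1}e_k$, and above all the injectivity of $\cW_{k-1}\to\cW_k$ --- are, as the paper's own remark in Section \ref{s:degenaff} notes, ``not immediate from the defining relations'' and are established in \cite{AMR} only in tandem with the basis theorem itself. Using them as inputs to a Jones-basic-construction induction that is supposed to \emph{prove} linear independence is circular: the inductive presentation as a sum follows from spanning, but the directness of that sum and the freeness of each summand (needed for your rank count $r(2k-1)\cdot\mathrm{rank}\,\cW_{r,k-1}$) are equivalent to the statement being proved. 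The missing ingredient is an external certificate of independence --- in \cite{AMR} this is supplied by constructing a sufficiently large family of (seminormal) representations, and the admissibility condition \eqref{admissibility} enters as the condition under which those representations exist, not merely as ``nondegeneracy of a conditional expectation.'' Your closing consistency check (detecting independence in the affine case by projecting to cyclotomic quotients with $r$ large) is a legitimate and standard device, but it too requires producing $b_1,\ldots,b_r$ over an extension of $C$ compatible with the given $z_0^{(\ell)}$ via \eqref{cycadmissibility}, which is a nontrivial admissibility question in its own right.
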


\smallskip\noindent
 Part (a) of Theorem \ref{thm:W_basis} is \cite[Theorem 4.6]{Naz} (see also \cite[Theorem 2.12]{AMR}) and part (b) is  
\cite[Prop.\ 2.15 and Theorem 5.5]{AMR}. We refer to these references for the proof, 
remarking only that 
one key point in showing that $\{d^{n_1,\ldots, n_k}\ |\ d\in D_k, n_1,\ldots, n_k\in \ZZ_{\geq 0} \}$
spans $\cW_k$ is that if $(i,j)$ is a top-to-bottom edge in $d$, then
\begin{equation}
\label{yd-vert-terms}
	y_i d = d y_j + \text{(terms with fewer crossings)},
\end{equation}
and if $(i,j)$ is a top-to-top edge in $d$ then 
\begin{equation}
\label{yd-horiz-terms}
	y_i d = -y_j d + \text{(terms with fewer crossings)}.
\end{equation}
This is illustrated in the affine case in \eqref{basisexample}.

%%%%%%%%%%%%%%%%%%%%%%
%%%% SUBSECTION: DEG CENTER %%%
%%%%%%%%%%%%%%%%%%%%%%
\subsection{The center of $\cW_k$}
\label{sec:deg_center}

The degenerate affine BMW algebra is the algebra $\cW_k$ over $C$ defined in 
Section \ref{s:degenaff} and the polynomial ring $C[y_1,\ldots, y_k]$ is a subalgebra of $\cW_k$ (see Remark \ref{polysubalg}).  
The symmetric group $S_k$ acts on $C[y_1,\ldots, y_k]$ by permuting the
variables and the ring of symmetric functions is
$$C[y_1,\ldots, y_k]^{S_k}
=\{ f \in C[y_1,\ldots, y_k]\ |\ \hbox{$wf=f$, for $w\in S_k$}\}.
$$
A classical fact (see, for example, \cite[Theorem 3.3.1]{Kl}) is that the
center of the degenerate affine Hecke algebra $\cH_k$ is 
$$Z(\cH_k) = C[y_1,\ldots, y_k]^{S_k}.$$
Theorem \ref{degBMWcenter} gives an analogous characterization of the center of the
degenerate affine BMW algebra.

%%%%%%%%% THM: DEG CENTER  %%%%%%%%%%%%%
\begin{thm}\label{degBMWcenter} 
The center of the degenerate affine BMW algebra $\cW_k$ is 
$$\cR_k =  \{ f \in C[y_1,\ldots, y_k]^{S_k}\ |\ 
f(y_1,-y_1, y_3, \ldots, y_k) =  f(0, 0, y_3, \ldots, y_k) \}.$$

\end{thm}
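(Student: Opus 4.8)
The plan is to prove the two inclusions $\cR_k\subseteq Z(\cW_k)$ and $Z(\cW_k)\subseteq\cR_k$ separately. For sufficiency, since $\cR_k$ is a subring of $C[y_1,\ldots,y_k]$ and, by \cite[Theorem 2.11(Q)]{Pr}, is generated as a ring by the odd power sums $p_\ell=y_1^\ell+\cdots+y_k^\ell$ (with $\ell$ odd), it is enough to show each such $p_\ell$ commutes with the generators $y_j,t_{s_j},e_j$. Commutation with $y_j$ is immediate. For $e_i$, relation \eqref{rel:unwrapping} gives $e_iy_{i+1}=-e_iy_i$ and $y_{i+1}e_i=-y_ie_i$, hence $e_iy_{i+1}^\ell=(-1)^\ell e_iy_i^\ell$ and $y_{i+1}^\ell e_i=(-1)^\ell y_i^\ell e_i$; for odd $\ell$ this yields $e_i(y_i^\ell+y_{i+1}^\ell)=0=(y_i^\ell+y_{i+1}^\ell)e_i$, while $e_i$ commutes with $y_j^\ell$ for $j\neq i,i+1$, so $e_ip_\ell=p_\ell e_i$.

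For the commutation with $t_{s_i}$, adding \eqref{eq:tyi} and \eqref{eq:tyip1} and using the four $e_i$-identities above to normalise gives
\[
t_{s_i}(y_i^\ell+y_{i+1}^\ell)=(y_i^\ell+y_{i+1}^\ell)t_{s_i}+\sum_{a=0}^{\ell-1}\big((-1)^a-(-1)^{\ell-1-a}\big)\,y_{i+1}^{\ell-1-a}e_iy_{i+1}^{a},
\]
where the purely polynomial contribution cancels because $y_i$ and $y_{i+1}$ commute. When $\ell$ is odd, $\ell-1$ is even and $(-1)^a=(-1)^{\ell-1-a}$, so the correction vanishes and $t_{s_i}p_\ell=p_\ell t_{s_i}$. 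Thus every odd power sum, and therefore all of $\cR_k$, is central.

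For necessity, let $z\in Z(\cW_k)$. The crucial first step (Step A, below) is that $z\in C[y_1,\ldots,y_k]$. Granting this, apply the surjection $\pi\colon\cW_k\to\cH_k$: since $\pi$ restricts to the inclusion $C[y]\hookrightarrow\cH_k$ of Remark \ref{polysubalg}, we have $\pi(z)=z$, and $\pi(z)$ is central in $\cH_k$, so $z\in Z(\cH_k)=C[y_1,\ldots,y_k]^{S_k}$ is symmetric; in particular $c_{a,b}=c_{b,a}$ when we write $z=\sum_{a,b}c_{a,b}\,y_i^ay_{i+1}^b$ with $c_{a,b}\in C[y_j:j\neq i,i+1]$. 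Using $e_iy_i=-e_iy_{i+1}$ one then computes $e_iz=\sum_m C_m\,e_iy_{i+1}^m$ and $ze_i=\sum_m C_m\,y_i^me_i$, where $C_m=\sum_{a+b=m}(-1)^bc_{a,b}$ is exactly the coefficient of $t^m$ in $z(\ldots,t,-t,\ldots)$ obtained by substituting $y_i=t$ and $y_{i+1}=-t$. Comparing these two expressions in the basis of Theorem \ref{thm:W_basis}, in which the powers of $y$ attached to the top cup and to the bottom cap of the diagram $e_i$ vary independently, forces $C_m=0$ for all $m\geq1$. Hence $z(y_1,-y_1,y_3,\ldots,y_k)$ is independent of $y_1$ and equals $z(0,0,y_3,\ldots,y_k)$, i.e.\ $z\in\cR_k$.

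The main obstacle is Step A, the inclusion $Z(\cW_k)\subseteq C[y_1,\ldots,y_k]$. I would prove the stronger assertion that $C[y_1,\ldots,y_k]$ is a maximal commutative subalgebra of $\cW_k$, that is, equals its own centraliser; then $Z(\cW_k)\subseteq C[y]$ because central elements commute with every $y_j$. The tool is the basis $\{d^{n_1,\ldots,n_k}\}$ of Theorem \ref{thm:W_basis} together with the straightening rules \eqref{yd-vert-terms}--\eqref{yd-horiz-terms}: modulo diagrams with strictly fewer crossings, left multiplication by $y_a$ on the component of a fixed diagram $d$ coincides with right multiplication by $y_b$ when the top vertex $a$ is joined in $d$ to the bottom vertex $b$, whereas $(y_a+y_b)\,d$ is a combination of fewer-crossing diagrams when $a$ is joined to another top vertex $b$. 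Passing to leading terms in the crossing filtration, the left and right actions of $C[y]$ on the $d$-component are intertwined through the matching of $d$: for a nonidentity permutation diagram this matching is a nontrivial bijection of indices, and for a diagram containing a cup the left action is forced to factor through a proper quotient of $C[y]$. In either case the balancing condition $y_jc=cy_j$ for all $j$ cannot hold on a nonzero $d$-component unless $d=\mathrm{id}$, leaving $c\in C[y]$. Executing this leading-term analysis uniformly over all diagram types, and verifying that no cancellation between distinct diagrams can sustain a nonpolynomial contribution, is the technical heart of the proof.
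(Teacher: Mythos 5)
Your architecture is sound and, in places, genuinely different from the paper's. The paper proves both inclusions at once via two ``if and only if'' steps: Step~1 shows that the centralizer of $C[y_1,\ldots,y_k]$ in $\cW_k$ is $C[y_1,\ldots,y_k]$ itself, and Step~2 computes $t_{s_1}f$ explicitly for $f=\sum y_1^ay_2^bf_{a,b}$ (formula \eqref{ts1pastf}), from which $t_{s_1}f=ft_{s_1}$ is seen to be \emph{equivalent} to $s_1f=f$ together with the alternating-sum vanishing $\sum_b(-1)^bf_{\ell-b,b}=0$, i.e.\ to the cancellation property; commutation with the $e_i$ is then free from \eqref{rel:e_defn}. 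Your necessity argument replaces the paper's single computation by two shortcuts --- projecting to $\cH_k$ to get $S_k$-invariance, and comparing $e_iz$ with $ze_i$ in the basis to get $C_m=0$ --- and both are valid; the top-cup/bottom-cap exponent comparison is a clean way to extract the wheel condition. Your sufficiency argument is Nazarov's original route (central generators $p_1,p_3,p_5,\ldots$), and your verification that odd power sums commute with $t_{s_i}$ via \eqref{eq:tyi}--\eqref{eq:tyip1} is correct. Two caveats. First, that route needs $\cR_k=C[p_1,p_3,\ldots]$, which (as in \eqref{oddpower}) really requires $\QQ\subseteq C$; e.g.\ $Q_{(1)}=2p_1$, so over a general commutative ring the odd power sums do not generate the $\ZZ$-span of the $Q_\lambda$. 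The paper's equivalence proof works over any $C$ and never invokes Pragacz. Second, your Step~A is exactly the paper's Step~1, but you leave it as ``the technical heart''; in fact it is short: take $d$ with the \emph{maximal} number of crossings having $c_d^{n_1,\ldots,n_k}\neq0$ and an edge $(i,j)$ with $j\neq i'$; by \eqref{yd-vert-terms}--\eqref{yd-horiz-terms} the coefficient of the basis element $y_id^{n_1,\ldots,n_k}$ in $y_if$ is $c_d^{n_1,\ldots,n_k}$ while in $fy_i$ it is $0$, and there is no cross-diagram cancellation to worry about because distinct diagrams index distinct basis elements. With that step filled in, your proof goes through.
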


\begin{proof}

\noindent \emph{Step 1: $f\in \cW_k$ commutes with all $y_i$ $\Leftrightarrow$ $f \in C[y_1, \dots, y_k]$:}\\
Assume $f \in \cW_k$ and write
$$f = \sum c_d^{n_1,\ldots, n_k} d^{n_1,\ldots, n_k}$$
in terms of the basis in Theorem \ref{thm:W_basis}. 
Let $d\in D_k$ with the maximal number of crossings
such that $c_d^{n_1,\ldots, n_k}\ne 0$ and, using the notation before \eqref{Brauerbasis},
suppose there is an edge $(i,j)$ of $d$ such that $j\ne i'$.  Then,
by \eqref{yd-vert-terms} and \eqref{yd-horiz-terms},
$$\text{the coefficient of} \quad y_id^{n_1,\ldots, n_k} \quad\hbox{in}\quad
y_i f \quad\hbox{is}\quad c_d^{n_1,\ldots, n_k}$$
and 
$$\text{the coefficient of} \quad y_id^{n_1,\ldots, n_k} \quad\hbox{in}\quad
f y_i \quad\hbox{is}\quad 0.$$
If $y_i f= f y_i$, it follows that there is no such edge, and so $d=1$. Thus $f \in C[y_1,\ldots, y_k]$. 

\noindent
Conversely, if $f \in C[y_1,\ldots, y_k]$ then $y_i f = f y_i$.

~

\noindent \emph{Step 2: $f\in C[y_1,\ldots, y_k]$ commutes with all $t_{s_i}$ $\Leftrightarrow$ $f \in \cR_k$:}\\
Assume $f\in C[y_1, \dots, y_k]$ and write 
\begin{equation*}
f = \sum_{a,b \in \ZZ_{\geq 0} } y_1^a y_2^b f_{a,b}, \qquad \text{ where } f_{a,b} \in C[y_3, \dots, y_k].
\end{equation*}
Then $\displaystyle{
f(0, 0, y_3, \dots, y_k) = \sum_{a,b \in \ZZ_{\geq 0}} f_{a,b}
}$
and
\begin{equation}
\label{yyminus}
f(y_1, -y_1, y_3, \dots, y_k) = \sum_{a,b \in \ZZ_{\geq 0}} (-1)^b y_1^{a+b} f_{a,b}
= \sum_{\ell \in \ZZ_{\geq 0 }} y_1^\ell \left( \sum_{b=0}^\ell (-1)^b f_{\ell-b, b}\right).
\end{equation}
By direct computation using  \eqref{eq:tyi} and \eqref{eq:tyip1},
\begin{align*}
t_{s_1} y_1^a y_2^b 
=s_1(y_1^a y_1^b) t_{s_1} - \frac{y_1^a y_2^b - s_1(y_1^ay_2^b)}{y_1 - y_2}
+(-1)^{a} \sum_{r=1}^{a+b} (-1)^r y_1^{a+b-r} e_1 y_1^{r-1},
\end{align*}
and it follows that 
\begin{equation}\label{ts1pastf}
t_{s_1} f =  (s_1 f) t_{s_1} - \frac{f - s_1 f }{y_1 - y_2} 
+ \sum_{\ell \in \ZZ_{> 0}}\left(
\left( \sum_{r=1}^{\ell} (-1)^r y_1^{\ell-r} e_1 y_1^{r-1} \right) 
 \left(\sum_{b=0}^{\ell} (-1)^{\ell-b}f_{\ell-b,b} \right) \right).
 \end{equation}
 Thus,  if  $f(y_1, -y_1, y_3, \dots, y_k) = f(0, 0, y_3, \dots, y_k)$, then
\begin{equation}\label{e-sym-cancellation}
\sum_{b=0}^\ell (-1)^b f_{\ell-b, b} = 0, \qquad\hbox{for $\ell \neq 0$.}
\end{equation}
Hence, if  $f\in C[y_1, \dots, y_k]^{S_k}$ and $f(y_1, -y_1, y_3, \dots, y_k) = f(0, 0, y_3, \dots, y_k)$ then $s_1f=f$ and, by \eqref{yyminus}, \eqref{e-sym-cancellation} holds so that,
by \eqref{ts1pastf}, $t_{s_1} f = f t_{s_1}.$ Similarly, $f$ commutes with all $t_{s_i}$.

\noindent
Conversely,  if $f\in C[y_1, \dots, y_k]$ and $t_{s_i} f = f t_{s_i}$ then
$$s_i f = f \qquad \text{ and } \qquad \sum_{b=0}^{\ell} (-1)^{\ell-b}f_{\ell-b,b} =0, \qquad
\hbox{for $\ell \neq 0$,}$$
so that $f\in C[y_1, \dots, y_k]^{S_k}$ and $f(y_1, -y_1, y_3, \dots, y_k) = f(0,0, y_3, \dots , y_k)$.

It follows from \eqref{rel:e_defn} that $\cR_k = Z(\cW_k)$.
\end{proof}

The \emph{power sum symmetric functions} $p_i$ are given by 
$$p_i = y_1^i+y_2^i+\cdots+y_k^i, \qquad\hbox{for $i\in \ZZ_{>0}$.}$$
The \emph{Hall-Littlewood polynomials} (see \cite[Ch.\ III (2.1)]{Mac}) are given by 
$$P_\lambda(y;t) = P_\lambda(y_1,\ldots, y_k; t) = \frac{1}{v_\lambda(t)}
\sum_{w\in S_k} w\left(y_1^{\lambda_1}\cdots y_k^{\lambda_k}\prod_{1\le i<j\le k}
\frac{x_i-tx_j}{x_i-x_j}\right),$$
where $v_\lambda(t)$ is a normalizing constant (a polynomial in $t$) so that 
the coefficient of  $y_1^{\lambda_1}\cdots y_k^{\lambda_k}$ in $P_\lambda(y;t)$ is equal to $1$.
The \emph{Schur Q-functions} (see \cite[Ch.\ III (8.7)]{Mac}) are
$$Q_\lambda = \begin{cases}
0, &\hbox{if $\lambda$ is not strict}, \\
2^{\ell(\lambda)}P_\lambda(y;-1), &\hbox{if $\lambda$ is strict,}
\end{cases}$$
where $\ell(\lambda)$ is the number of (nonzero) parts of $\lambda$ and the partition
$\lambda$ is \emph{strict} if all its (nonzero) parts are distinct.
Let $\cR_k$ be as in Theorem \ref{degBMWcenter}.  Then
(see \cite[Cor. 4.10]{Naz}, \cite[Theorem 2.11(Q)]{Pr} and \cite[Ch.\ III \S 8]{Mac})
\begin{equation}\label{oddpower}
\cR_k = C[p_1,p_3, p_5, \ldots] = 
\hbox{$C$span-}\{ Q_\lambda\ |\ \hbox{$\lambda$ is strict}\}.
\end{equation}
More generally, let $r\in \ZZ_{>0}$ and let $\zeta$ be a primitive $r$th root of unity.
Define
$$\cR_{r,k} =  \{ f \in \ZZ[\zeta][y_1,\ldots, y_k]^{S_k}\ |\ 
f(y_1,\zeta y_1,\ldots, \zeta^{r-1}y_1, y_{r+1}, \ldots, y_k) =  f(0, 0, \ldots, 0, y_{r+1}, \ldots, y_k) \}.
$$
Then
\begin{equation}\label{RrkA}
\cR_{r,k}\otimes_{\ZZ[\zeta]} \QQ(\zeta) = \QQ(\zeta)[p_i\ |\ i\ne 0 \bmod r],
\end{equation}
and 
\begin{equation}\label{RrkB}
\hbox{$\cR_{r,k}$ has $\ZZ[\zeta]$-basis}\qquad
\{ P_\lambda(y;\zeta)\ |\ \hbox{$m_i(\lambda)<r$ and $\lambda_1\le k$}\},
%\qquad\hbox{and}\qquad
%\{q_\lambda(x;\zeta)\ |\ \hbox{$m_i(\lambda)<r$ and $\lambda_1\le k$}\},
\end{equation}
where $m_i(\lambda)$ is the number parts of size $i$ in $\lambda$.  The ring $\cR_{r,k}$
is studied in \cite{Mo},  \cite{LLT}, \cite[Ch.\ III Ex.\ 5.7 and Ex.\ 7.7]{Mac}, \cite{To}, \cite{FJ+}, 
and others.   The proofs of \eqref{RrkA} and \eqref{RrkB} follow from
\cite[Ch.\ III Ex.\ 7.7]{Mac}, \cite[Lemma 2.2 and following remarks]{To} and the arguments in the 
proofs of \cite[Lemma 3.2 and Proposition 3.5]{FJ+}.

\begin{remark}  The left ideal of $\cW_2$ generated by $e_1$ is $C[y_1]e_1$. This is an infinite dimensional (generically irreducible) $\cW_2$-module on which $Z(\cW_2)$ acts by constants. Thus, as noted by \cite[par.\ before Ex.\ 2.17]{AMR}, it follows that $\cW_2$ is not finitely generated 
as a $Z(\cW_2)$-module. 
%  Is $\cW_2$ finitely generated over its center in the case when $Z_0(u)$ is rational? }
\end{remark}

%%%%%%%%%%%%%%%%%%%%%%
%%%% SUBSECTION: AFF BASIS %%%
%%%%%%%%%%%%%%%%%%%%%%
\subsection{A basis of $W_k$}

An affine tangle has $k$ strands and a flagpole just as in the case of an affine braid, but there 
is no restriction that a strand must connect an upper vertex to a lower vertex. 
Let $X^{\varepsilon_1}$ and $T_i$  be the affine braids given in (\ref{braidfig1}) and 
let 
\begin{equation}
E_i = 
\beginpicture
\setcoordinatesystem units <.5cm,.5cm>         % sets scale
\setplotarea x from -5 to 3.5, y from -1 to 1    % sets plot size up
\put{$\bullet$} at -3 0.75      %
\put{$\bullet$} at -2 0.75      %
\put{$\bullet$} at -1 0.75      %
\put{$\bullet$} at  0 0.75      %   Top dots
\put{$\bullet$} at  1 0.75      %
\put{$\bullet$} at  2 0.75      %
\put{$\bullet$} at  3 0.75      %   
\put{$\bullet$} at -3 -0.75          %
\put{$\bullet$} at -2 -0.75          %
\put{$\bullet$} at -1 -0.75          %
\put{$\bullet$} at  0 -0.75          %  Bottom dots
\put{$\bullet$} at  1 -0.75          %
\put{$\bullet$} at  2 -0.75          %
\put{$\bullet$} at  3 -0.75          %
%Flagpole
\plot -4.5 1.25 -4.5 -1.25 /
\plot -4.25 1.25 -4.25 -1.25 /
\ellipticalarc axes ratio 1:1 360 degrees from -4.5 1.25 center 
at -4.375 1.25
\put{$*$} at -4.375 1.25  
\ellipticalarc axes ratio 1:1 180 degrees from -4.5 -1.25 center 
at -4.375 -1.25 
% Vertical edges
\plot -3 0.75  -3 -0.75 /
\plot -2 0.75  -2 -0.75 /
\plot -1 0.75  -1 -0.75 /
\plot  2 0.75   2 -0.75 /
\plot  3 0.75   3 -0.75 /
%horizontal edges
\plot 0.3 0.25  0.7 0.25 /
\plot 0.3 -0.25    0.7 -0.25   /
\setquadratic
\plot  0.7 0.25  0.95 0.45  1 0.75 /
\plot  0 0.75  0.05 0.45  0.3 0.25 /
\plot  0.7 -0.25  0.95 -0.45  1 -0.75 /
\plot  0 -0.75  0.05 -0.45  0.3 -0.25 /
\endpicture.
\end{equation}
Goodman and Hauchschild \cite[Cor. 6.14(b)]{GH1} 
have shown that the affine BMW algebra $W_k$ is the algebra of linear combinations 
of tangles generated 
by $X^{\varepsilon_1}, T_1, \ldots, T_{k-1}, E_1, \ldots, E_{k-1}$ and the relations 
\eqref{rel:Untwisting}, \eqref{rel:Unwrapping}  and 
\eqref{Skein} expressed in the form 
\begin{equation}
\beginpicture
\setcoordinatesystem units <.5cm,.5cm>         % sets scale
\setplotarea x from -1 to 1, y from -1 to 1    % sets plot size up
%\put{$\bullet$} at -0.5 .75      %
%\put{$\bullet$} at  0.5 .75      %   Top dots
%\put{$\bullet$} at -0.5 -.75          %
%\put{$\bullet$} at  0.5 -.75          %  Bottom dots
\setquadratic
% single crossing
\plot  -.5 -.75  -.45 -0.45  -0.1 -0.1 /
\plot  .1 .15  .45 .45  .5 .75 /
\plot -.5 .75  -.45 .45  0 0  .45 -.45  .5 -.75 /
\endpicture
-
\beginpicture
\setcoordinatesystem units <.5cm,.5cm>         % sets scale
\setplotarea x from -1 to 1, y from -1 to 1    % sets plot size up
%\put{$\bullet$} at -0.5 .75      %
%\put{$\bullet$} at  0.5 .75      %   Top dots
%\put{$\bullet$} at -0.5 -.75          %
%\put{$\bullet$} at  0.5 -.75          %  Bottom dots
\setquadratic
% single crossing
\plot  -.5 .75  -.45 0.45  -0.1 0.1 /
\plot  .1 -.15  .45 -.45  .5 -.75 /
\plot -.5 -.75  -.45 -.45  0 0  .45 .45  .5 .75 /
\endpicture
~=~
(q-q^{-1})\left(
\beginpicture
\setcoordinatesystem units <.5cm,.5cm>         % sets scale
\setplotarea x from -1 to 1, y from -1 to 1    % sets plot size up
%\put{$\bullet$} at -0.5 .75      %
%\put{$\bullet$} at  0.5 .75      %   Top dots
%\put{$\bullet$} at -0.5 -.75          %
%\put{$\bullet$} at  0.5 -.75          %  Bottom dots
\plot -0.5 .75   -0.5 -.75 /
\plot  0.5 .75    0.5 -.75 /
\endpicture
-
\beginpicture
\setcoordinatesystem units <.5cm,.5cm>         % sets scale
\setplotarea x from -1 to 1, y from -1 to 1    % sets plot size up
%\put{$\bullet$} at -0.5 .75      %
%\put{$\bullet$} at  0.5 .75      %   Top dots
%\put{$\bullet$} at -0.5 -.75          %
%\put{$\bullet$} at  0.5 -.75          %  Bottom dots
%horizontal edges
\plot -0.2 0.25  0.2 0.25 /
\plot -0.2 -0.25    0.2 -0.25   /
\setquadratic
\plot  0.2 0.25  0.45 0.45  0.5 0.75 /
\plot  -.5 0.75  -.45 0.45  -0.2 0.25 /
\plot  0.2 -0.25  0.45 -0.45  0.5 -0.75 /
\plot  -.5 -0.75  -.45 -0.45  -0.2 -0.25 /
\endpicture
\right)
\end{equation}
\begin{equation}
\beginpicture
\setcoordinatesystem units <.5cm,.5cm>         % sets scale
\setplotarea x from -1 to 2, y from -1 to 1    % sets plot size up
%\put{$\bullet$} at -0.5 .75      % Top dots
%\put{$\bullet$} at -0.5 -.75          % Bottom dots
\plot 1.5 .75  1.5 -0.75 /
%horizontal edges
\plot 0.8 -1.25  1.2 -1.25 /
\plot 0.8 1.25    1.2 1.25   /
\setquadratic
\plot  1.2 -1.25  1.45 -1.05  1.5 -0.75 /
\plot  0.5 -0.75  .55 -1.05  0.8 -1.25 /
\plot  1.2 1.25  1.45 1.05  1.5 0.75 /
\plot  .5 0.75  .55 1.05  0.8 1.25 /
\setquadratic
% single crossing
\plot  -.5 -.75  -.45 -0.45  -0.1 -0.1 /
\plot  .1 .15  .45 .45  .5 .75 /
\plot -.5 .75  -.45 .45  0 0  .45 -.45  .5 -.75 /
\endpicture
=z\;
\beginpicture
\setcoordinatesystem units <.5cm,.5cm>         % sets scale
\setplotarea x from -0.5 to 0.5, y from -1 to 1    % sets plot size up
%\put{$\bullet$} at  0 .75      %   Top dots
%\put{$\bullet$} at  0 -.75          %  Bottom dots
\plot  0 .75    0 -.75 /
\endpicture
\qquad\hbox{and}\qquad
\beginpicture
\setcoordinatesystem units <.5cm,.5cm>         % sets scale
\setplotarea x from -1 to 2, y from -1 to 1    % sets plot size up
%\put{$\bullet$} at -0.5 .75      % Top dots
%\put{$\bullet$} at -0.5 -.75          % Bottom dots
\plot 1.5 .75  1.5 -0.75 /
%horizontal edges
\plot 0.8 -1.25  1.2 -1.25 /
\plot 0.8 1.25    1.2 1.25   /
\setquadratic
\plot  1.2 -1.25  1.45 -1.05  1.5 -0.75 /
\plot  0.5 -0.75  .55 -1.05  0.8 -1.25 /
\plot  1.2 1.25  1.45 1.05  1.5 0.75 /
\plot  .5 0.75  .55 1.05  0.8 1.25 /
\setquadratic
% single crossing
\plot  -.5 .75  -.45 0.45  -0.1 0.1 /
\plot  .1 -.15  .45 -.45  .5 -.75 /
\plot -.5 -.75  -.45 -.45  0 0  .45 .45  .5 .75 /
\endpicture
= z^{-1}
\beginpicture
\setcoordinatesystem units <.5cm,.5cm>         % sets scale
\setplotarea x from -0.5 to 0.5, y from -1 to 1    % sets plot size up
%\put{$\bullet$} at  0 .75      %   Top dots
%\put{$\bullet$} at  0 -.75          %  Bottom dots
\plot  0 .75    0 -.75 /
\endpicture
\end{equation}
\begin{equation}
\beginpicture
\setcoordinatesystem units <.5cm,.5cm>         % sets scale
\setplotarea x from -4 to 1.5, y from -2 to 2    % sets plot size up
\put{$\hbox{$\ell$ loops\ }\Bigg\{$} at -3.8 .4   %
\put{$\bullet$} at  0 2      %   Top dots
%\put{$\bullet$} at  1 2      %
\put{$\bullet$} at  0 -1.5          %  Bottom dots
%\put{$\bullet$} at  1 -1.5          %
%Flagpole
\plot -1.5 2.2 -1.5 1.12 /
\plot -1.5 .88 -1.5 0.12 /
\plot -1.5 -0.12 -1.5 -0.88 /
\plot -1.5 -1.12 -1.5 -1.75 /
\plot -1.25 2.2 -1.25 1.12 /
\plot -1.25 .88 -1.25 0.12 /
\plot -1.25 -0.12 -1.25 -0.88 /
\plot -1.25 -1.12 -1.25 -1.75 /
\ellipticalarc axes ratio 1:1 360 degrees from -1.5 2.2 center 
at -1.375 2.2
\put{$*$} at -1.375 2.2  
\ellipticalarc axes ratio 1:1 180 degrees from -1.5 -1.75 center 
at -1.375 -1.75 
% Vertical edges
%\plot -3 2  -3 .5 /
%\plot -2 2  -2 .5 /
%\plot -1 2  -1 .5 /
%\plot  0 2   0 .5 /
\plot  1 2   1 -1.5 /
%\plot  2 2   2 .5 /
%\plot  3 2   3 .5 /
\setlinear
%first curve around pole
\plot -0.3 1.5  -1.1 1.5 /
\ellipticalarc axes ratio 2:1 180 degrees from -1.65 1.5  center 
at -1.65 1.25 
\plot -1.65 1  -1.1 1 /
\ellipticalarc axes ratio 2:1 -180 degrees from -1.1 1  center 
at -1.1 .75 
%second curve around pole
%\plot -1.3 1.5  -2.1 1.5 /
\ellipticalarc axes ratio 2:1 180 degrees from -1.65 .5  center 
at -1.65 .25 
\ellipticalarc axes ratio 2:1 -180 degrees from -1.1 0  center 
at -1.1 -0.25 
\plot -1.65 0  -1.1 0 /
%third curve around pole
\ellipticalarc axes ratio 2:1 180 degrees from -1.65 -.5  center 
at -1.65 -.75 
\plot -1.65 -1  -0.3 -1 /
\setquadratic
%\plot  -3.8 1.5  -3.7 1.45 -3.5 1.25  -3.05 .8  -3 .5 /
\plot  -0.3 1.5  -0.05 1.7  -0 2 /
\plot  -0.3 -1  -0.05 -1.2  -0 -1.5 /
%conditional expectation
\ellipticalarc axes ratio 1:1 180 degrees from 1 2 center 
at 0.5 2
\ellipticalarc axes ratio 1:1 180 degrees from 0 -1.5 center 
at 0.5 -1.5
\endpicture
= Z_0^{(\ell)}
\beginpicture
\setcoordinatesystem units <.5cm,.5cm>         % sets scale
\setplotarea x from -5 to -4, y from -1 to 1    % sets plot size up
%Flagpole
\plot -4.5 1.25 -4.5 -1.25 /
\plot -4.25 1.25 -4.25 -1.25 /
\ellipticalarc axes ratio 1:1 360 degrees from -4.5 1.25 center 
at -4.375 1.25
\put{$*$} at -4.375 1.25  
\ellipticalarc axes ratio 1:1 180 degrees from -4.5 -1.25 center 
at -4.375 -1.25 
\endpicture
\qquad\hbox{and}\qquad 
\beginpicture
\setcoordinatesystem units <.5cm,.5cm>         % sets scale
\setplotarea x from -4 to 1.5, y from -2 to 2    % sets plot size up
%\put{$\hbox{$r$ loops\ }\Bigg\{$} at -3.8 .4   %
%\put{$\bullet$} at  0 2      %   Top dots
%\put{$\bullet$} at  1 2      %
%\put{$\bullet$} at  0 -1.5          %  Bottom dots
%\put{$\bullet$} at  1 -1.5          %
%Flagpole
\plot -1.5 2.2 -1.5 1.12 /
\plot -1.5 0.88 -1.5 -0.88 /
%\plot -1.5 .88 -1.5 0.12 /
%\plot -1.5 -0.12 -1.5 -0.88 /
\plot -1.5 -1.12 -1.5 -1.75 /
\plot -1.25 2.2 -1.25 1.12 /
\plot -1.25 0.88 -1.25 -0.88 /
%\plot -1.25 .88 -1.25 0.12 /
%\plot -1.25 -0.12 -1.25 -0.88 /
\plot -1.25 -1.12 -1.25 -1.75 /
\ellipticalarc axes ratio 1:1 360 degrees from -1.5 2.2 center 
at -1.375 2.2
\put{$*$} at -1.375 2.2  
\ellipticalarc axes ratio 1:1 180 degrees from -1.5 -1.75 center 
at -1.375 -1.75 
% Vertical edges
%\plot -3 2  -3 .5 /
%\plot -2 2  -2 .5 /
%\plot -1 2  -1 .5 /
%\plot  0 2   0 .5 /
\plot  1 2   1 0.5 /
\plot  1 -0.5   1 -1.5 /
%\plot  2 2   2 .5 /
%\plot  3 2   3 .5 /
\setlinear
%first curve around pole
\plot -0.3 1.5  -1.1 1.5 /
\ellipticalarc axes ratio 2:1 180 degrees from -1.65 1.5  center 
at -1.65 1.25 
\plot -1.65 1  -0.3 1 /
%\ellipticalarc axes ratio 2:1 -180 degrees from -1.1 1  center 
%at -1.1 .75 
%second curve around pole
%\plot -1.3 1.5  -2.1 1.5 /
%\ellipticalarc axes ratio 2:1 180 degrees from -1.65 .5  center 
%at -1.65 .25 
%\ellipticalarc axes ratio 2:1 -180 degrees from -1.1 0  center 
%at -1.1 -0.25 
%\plot -1.65 0  -1.1 0 /
%third curve around pole
\plot -0.3 -0.5  -1.1 -0.5 /
\ellipticalarc axes ratio 2:1 180 degrees from -1.65 -.5  center 
at -1.65 -.75 
\plot -1.65 -1  -0.3 -1 /
\setquadratic
%\plot  -3.8 1.5  -3.7 1.45 -3.5 1.25  -3.05 .8  -3 .5 /
\plot  -0.3 1  -0.05 0.8  -0 0.5 /
\plot   0 0.5   0.15 0.2   0.7 0 /
\plot   1 -0.5   0.9 -0.15   0.7 0 /
\plot  -0.3 1.5  -0.05 1.7  -0 2 /
\plot   -0.3 -0.5     -.1 -.425  -0.05 -0.325 /
\plot   -0.05 -0.325   0.15 -0.1   0.35 -0.05 /
\plot   0.65 0.15   0.9 0.25   1 0.5 /
%\plot  -0.3 -0.5  -0.05 -0.3  -0 0 /
\plot  -0.3 -1  -0.05 -1.2  -0 -1.5 /
%conditional expectation
\ellipticalarc axes ratio 1:1 180 degrees from 1 2 center 
at 0.5 2
%\ellipticalarc axes ratio 1:1 180 degrees from 0 -1.5 center 
%at 0.5 -1.5
\endpicture
=
~z^{-1}\cdot
\beginpicture
\setcoordinatesystem units <.5cm,.5cm>         % sets scale
\setplotarea x from -5 to -1.5, y from -1 to 1    % sets plot size up
%\put{$\bullet$} at -3 0.75      %
%\put{$\bullet$} at -2 0.75      %
%Flagpole
\plot -4.5 1.25 -4.5 -1.25 /
\plot -4.25 1.25 -4.25 -1.25 /
\ellipticalarc axes ratio 1:1 360 degrees from -4.5 1.25 center 
at -4.375 1.25
\put{$*$} at -4.375 1.25  
\ellipticalarc axes ratio 1:1 180 degrees from -4.5 -1.25 center 
at -4.375 -1.25 
%horizontal edges
%\plot -2.7 0.25  -2.3 0.25 /
\plot -2.7 -0.35    -2.3 -0.35   /
%vertical edges
\plot -2 -0.75   -2 -1.25 /
\plot -3 -0.75   -3 -1.25 /
\setquadratic
%\plot  -2.3 0.25  -2.05 0.45  -2 0.75 /
%\plot  -3 0.75  -2.95 0.45  -2.7 0.25 /
\plot  -2.3 -0.35  -2.05 -0.45  -2 -0.75 /
\plot  -3 -0.75  -2.95 -0.45  -2.7 -0.35 /
\endpicture
\end{equation}
\begin{equation}
\beginpicture
\setcoordinatesystem units <.5cm,.5cm>         % sets scale
\setplotarea x from 0 to 1, y from -1 to 1    % sets plot size up
%horizontal edges
\plot 0.3 0.5  0.7 0.5 /
\plot 0.3 -0.5    0.7 -0.5   /
\setquadratic
\plot  0.7 -0.5  0.95 -0.3  1 0 /
\plot  0 0  0.05 -0.3  0.3 -.5 /
\plot  0.7 0.5  0.95 0.3  1 0 /
\plot  0 0  0.05 0.3  0.3 0.5 /
\endpicture
~~=~~
\frac{z-z^{-1}}{q-q^{-1}} + 1 = Z_0^{(0)}.
\end{equation}

\begin{thm} \label{AffineBMWbasis}  Let $W_k$ be the affine BMW algebra and let 
$W_{r,k}(b_1,\ldots, b_r)$ be the cyclotomic BMW algebra as defined in 
Section \ref{sec:W_k-defn}.  Let $d \in D_k$ be a Brauer diagram, where $D_k$ is as in \eqref{Brauerbasis}.
Choose a minimal length expression of $d$ as a product 
of $e_1,\ldots, e_{k-1}, s_1, \ldots, s_{k-1}$,
$$d = a_1\cdots a_\ell, \qquad
a_i\in \{ e_1, \ldots, e_{k-1}, s_1, \ldots, s_{k-1}\},$$
such that the number of $s_i$ in this product is the number of crossings in $d$.  For each
$a_i$ which is in $\{ s_1, \ldots, s_{k-1}\}$ fix a choice of sign $\epsilon_j = \pm 1$ and set
$$T_d = A_1\cdots A_\ell, \qquad\hbox{where}\quad
A_j=\begin{cases}
E_i, &\hbox{if $a_j=e_i$,}\\
T_i^{\epsilon_j}, &\hbox{if $a_j = s_i$.}
\end{cases}
$$	     
For $n_1,\ldots, n_k\in \ZZ$ let
$$T_d^{n_1,\ldots, n_k} 
= Y_{i_1}^{n_1}\cdots Y_{i_\ell}^{n_\ell} T_d Y_{i_{\ell+1}}^{n_{\ell+1}}\cdots Y_{i_k}^{n_k},$$
where, in the lexicographic ordering of the edges $(i_1,j_1), \ldots, (i_k,j_k)$ of $d$, 
$i_1,\ldots, i_\ell$ are in the top row of $d$ and $i_{\ell+1}, \ldots, i_k$
are in the bottom row of $d$.
\item{(a)} If
\begin{align}
\left(
Z_{0}^- - \frac{z}{q-q^{-1}} - \frac{u^2}{u^2-1}
\right)
\left(
Z_{0}^+ + \frac{z^{-1}}{q-q^{-1}} - \frac{u^2}{u^2-1}
\right)
= \frac{-(u^2-q^2)(u^2-q^{-2})}{(u^2-1)^2(q-q^{-1})^2}
\label{Admissibility}
\end{align}
then
$\{T_d^{n_1,\ldots, n_k}\ |\ d\in D_k,\  n_1,\ldots, n_k\in \ZZ \}$
is a $C$-basis of $W_k$.
\item{(b)} If \eqref{Admissibility} holds and 
\begin{equation}\label{CycAdmissibility}
Z_0^+ + \frac{z^{-1}}{q-q^{-1}}-\frac{u^2}{u^2-1}
=
\left( \frac{z}{q-q^{-1}}+\frac{uz}{u^2-1}\right)
\prod_{j=1}^r \frac{u-b_j^{-1}}{u-b_j}
\end{equation}
then
$\{T_d^{n_1,\ldots, n_k}\ |\ d\in D_k,\  0\le n_1,\ldots, n_k\le r-1 \}$
is a $C$-basis of $W_{r,k}(b_1, \dots, b_r)$.
\end{thm}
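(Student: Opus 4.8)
The plan is to prove (a) and (b) in exact parallel with the degenerate statement Theorem~\ref{thm:W_basis}, separating the argument into a spanning step and a linear independence step, and to lean on the structural theory of \cite{GH1} and \cite{WY1} for the part that is not formal. Throughout I would work with the tangle presentation of $W_k$ recorded above (Goodman--Hauschild \cite[Cor.~6.14(b)]{GH1}), so that every element is a $C$-linear combination of affine tangles built from $X^{\varepsilon_1}$, the $T_i^{\pm1}$, and the $E_i$.

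For spanning, I would first use \eqref{rel:Y} to trade $X^{\varepsilon_1}$ for $Y_1$ and its conjugates, writing each tangle as a word in the $Y_i^{\pm1}$ interleaved with the $T_i^{\pm1}$ and $E_i$. The crossing identities \eqref{eq:TYi}--\eqref{eq:TYip1neg}, together with \eqref{TP1}--\eqref{TP2}, let me push every power of $Y_i$ outward toward the top and bottom boundary of the diagram; each move either fixes or strictly lowers the number of crossings, the error terms having strictly fewer crossings. This is precisely the affine incarnation of \eqref{yd-vert-terms}--\eqref{yd-horiz-terms}: on a through-strand $(i,j)$ one gets $Y_i(\text{diagram}) = (\text{diagram})Y_j + (\text{fewer crossings})$, while across a cap one has $Y_{i+1}E_i = Y_i^{-1}E_i$ by \eqref{rel:Unwrapping}. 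Any closed loops created in the process are absorbed using the unwrapping relations \eqref{rel:Unwrapping} and the loop value \eqref{SmashE}, producing the scalars $Z_0^{(\ell)}$. Induction on the number of crossings then reduces an arbitrary element to a $C$-combination of the standard monomials $T_d^{n_1,\dots,n_k}$, which gives spanning.

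For linear independence I would invoke the inductive structure recorded in the remarks: the decomposition
$$
W_k = W_{k-1}E_{k-1}W_{k-1} + W_{k-1}T_{k-1}W_{k-1} + W_{k-1}T_{k-1}^{-1}W_{k-1} + \sum_{\ell\in\ZZ} W_{k-1}Y_k^\ell W_{k-1},
$$
together with the bimodule isomorphism $E_kW_kE_k \cong W_{k-1}E_k$, sets up an induction on $k$ in which independence for $W_k$ is reduced to independence for $W_{k-1}$ and freeness of the Laurent monomials $Y_k^\ell$. The base of the independence is guaranteed by Remark~\ref{Polysubalg}, which gives that $C[Y_1^{\pm1},\dots,Y_k^{\pm1}]$ embeds as a subalgebra. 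The admissibility hypothesis \eqref{Admissibility} is exactly the condition ensuring that closing loops via \eqref{rel:Unwrapping} does not force relations among distinct $T_d^{n_1,\dots,n_k}$; this is the content of the Jones basic construction and cellularity analysis of \cite{GH1} (see also \cite{WY1}), to which I would refer for the technical heart, just as the degenerate case refers to \cite{Naz} and \cite{AMR}.

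Part (b) then follows by passing to the quotient by the cyclotomic relation \eqref{cycrelationA}. The hypothesis \eqref{CycAdmissibility} plays the role that \eqref{cycadmissibility} did in the degenerate setting: it makes the unwrapping parameters $Z_0^{(\ell)}$ consistent with $(Y_1-b_1)\cdots(Y_1-b_r)=0$, so that each $Y_i^\ell$ can be rewritten in terms of $1, Y_i, \dots, Y_i^{r-1}$ and the exponents truncate to $0 \le n_i \le r-1$; the resulting set remains independent by the freeness results of \cite{WY1} and \cite{RX2}. The main obstacle is the linear independence in (a): it is not formal, since one must show that \eqref{Admissibility} is precisely strong enough to prevent the algebra from degenerating, which is exactly why I would present spanning in full and cite \cite{GH1} and \cite{WY1} for independence.
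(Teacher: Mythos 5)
Your proposal follows essentially the same route as the paper: the paper likewise only sketches the spanning step via the crossing-reduction identities \eqref{vert} and \eqref{Horiz} and defers the linear independence (where the admissibility condition \eqref{Admissibility} does its real work) to the literature. The only adjustment to make is that the precise freeness results are \cite[Theorem 2.25]{GH2} for part (a) and \cite[Theorem 5.5]{GH2}, \cite[Theorem 8.1]{WY2} for part (b), rather than \cite{GH1} and \cite{WY1}.
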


Part (a) of Theorem \ref{AffineBMWbasis} is \cite[Theorem 2.25]{GH2} and 
part (b) is \cite[Theorem 5.5]{GH2} and \cite[Theorem 8.1]{WY2}. We refer to these references for proof, remarking only that 
one key point in showing that 
$\{T_d^{n_1,\ldots, n_k}\ |\ d\in D_k,\  n_1,\ldots, n_k\in \ZZ \}$ 
spans $W_k$ is that
if $(i,j)$ is a top-to-bottom edge in $d$ then
\begin{equation}\label{vert}
Y_iT_d = T_d Y_j + (\hbox{terms with fewer crossings}),
\end{equation}
and, if $(i,j)$ is a top-to-top edge in $d$ then
\begin{equation}\label{Horiz}
Y_iT_d = Y_j^{-1}T_d + (\hbox{terms with fewer crossings}).
\end{equation}
As an example, let $d = s_1e_3s_5e_2e_4e_1s_3s_5$ and choose $\epsilon_{1} =\epsilon_{3} =-\epsilon_{7} =\epsilon_{8} = 1$. Then
$$d=
\beginpicture
\setcoordinatesystem units <0.5cm,0.2cm> % sets scale
\setplotarea x from 0.5 to 7, y from 0 to 3    % sets plot size up
\linethickness=0.5pt                        % sets line thickness
\put{$\bullet$} at 1 -1 \put{$\bullet$} at 1 2
\put{$\bullet$} at 2 -1 \put{$\bullet$} at 2 2
\put{$\bullet$} at 3 -1 \put{$\bullet$} at 3 2
\put{$\bullet$} at 4 -1 \put{$\bullet$} at 4 2
\put{$\bullet$} at 5 -1 \put{$\bullet$} at 5 2
\put{$\bullet$} at 6 -1 \put{$\bullet$} at 6 2
%\put{$\bullet$} at 7 -1 \put{$\bullet$} at 7 2
\plot 2 2 4 -1 /
\plot 5 2 5 -1 /
%\plot 2 2 1 -1 /
\setquadratic
\plot 1 2 3.5 0.5 6 2 /
\plot 3 2 3.5 1.25 4 2 /
\plot 1 -1 1.5 -0.25 2 -1 /
\plot 3 -1 4.5 -0 6 -1 /
\endpicture
=
\beginpicture
\setcoordinatesystem units <0.5cm,0.2cm> % sets scale
\setplotarea x from 0.5 to 7, y from 0 to 3    % sets plot size up
\linethickness=0.5pt                        % sets line thickness
\put{$\bullet$} at 1 -1 \put{$\bullet$} at 1 2
\put{$\bullet$} at 2 -1 \put{$\bullet$} at 2 2
\put{$\bullet$} at 3 -1 \put{$\bullet$} at 3 2
\put{$\bullet$} at 4 -1 \put{$\bullet$} at 4 2
\put{$\bullet$} at 5 -1 \put{$\bullet$} at 5 2
\put{$\bullet$} at 6 -1 \put{$\bullet$} at 6 2
%\put{$\bullet$} at 7 -1 \put{$\bullet$} at 7 2
\put{$\bullet$} at 1 2 \put{$\bullet$} at 1 5
\put{$\bullet$} at 2 2 \put{$\bullet$} at 2 5
\put{$\bullet$} at 3 2 \put{$\bullet$} at 3 5
\put{$\bullet$} at 4 2 \put{$\bullet$} at 4 5
\put{$\bullet$} at 5 2 \put{$\bullet$} at 5 5
\put{$\bullet$} at 6 2 \put{$\bullet$} at 6 5
%\put{$\bullet$} at 7 -1 \put{$\bullet$} at 7 2
\put{$\bullet$} at 1 -4 \put{$\bullet$} at 1 -1
\put{$\bullet$} at 2 -4 \put{$\bullet$} at 2 -1
\put{$\bullet$} at 3 -4 \put{$\bullet$} at 3 -1
\put{$\bullet$} at 4 -4 \put{$\bullet$} at 4 -1
\put{$\bullet$} at 5 -4 \put{$\bullet$} at 5 -1
\put{$\bullet$} at 6 -4 \put{$\bullet$} at 6 -1
%\put{$\bullet$} at 7 -1 \put{$\bullet$} at 7 2
\plot 1 5 2 2 /
\plot 2 5 1 2 /
\plot 5 5 6 2 /
\plot 6 5 5 2 /
\plot 3 -1 4 -4 /
\plot 4 -1 3 -4 /
\plot 5 -1 6 -4 /
\plot 6 -1 5 -4 /
\plot 1 2 1 -1 /
\plot 6 2 6 -1 /
%\plot 2 2 1 -1 /
\setquadratic
\plot 3 5 3.5 4.25 4 5 /
\plot 3 2 3.5 2.75 4 2 /
\plot 2 2 2.5 1.25 3 2 /
\plot 2 -1 2.5 -0.25 3 -1 /
\plot 4 2 4.5 1.25 5 2 /
\plot 4 -1 4.5 -0.25 5 -1 /
\plot 1 -1 1.5 -1.75 2 -1 /
\plot 1 -4 1.5 -3.25 2 -4 /
\endpicture
\qquad\hbox{and}\qquad T_d = 
\beginpicture
\setcoordinatesystem units <0.5cm,0.2cm> % sets scale
\setplotarea x from 0.5 to 7, y from 0 to 3    % sets plot size up
\linethickness=0.5pt                        % sets line thickness
\put{$\bullet$} at 1 -1 \put{$\bullet$} at 1 2
\put{$\bullet$} at 2 -1 \put{$\bullet$} at 2 2
\put{$\bullet$} at 3 -1 \put{$\bullet$} at 3 2
\put{$\bullet$} at 4 -1 \put{$\bullet$} at 4 2
\put{$\bullet$} at 5 -1 \put{$\bullet$} at 5 2
\put{$\bullet$} at 6 -1 \put{$\bullet$} at 6 2
%\put{$\bullet$} at 7 -1 \put{$\bullet$} at 7 2
\put{$\bullet$} at 1 2 \put{$\bullet$} at 1 5
\put{$\bullet$} at 2 2 \put{$\bullet$} at 2 5
\put{$\bullet$} at 3 2 \put{$\bullet$} at 3 5
\put{$\bullet$} at 4 2 \put{$\bullet$} at 4 5
\put{$\bullet$} at 5 2 \put{$\bullet$} at 5 5
\put{$\bullet$} at 6 2 \put{$\bullet$} at 6 5
%\put{$\bullet$} at 7 -1 \put{$\bullet$} at 7 2
\put{$\bullet$} at 1 -4 \put{$\bullet$} at 1 -1
\put{$\bullet$} at 2 -4 \put{$\bullet$} at 2 -1
\put{$\bullet$} at 3 -4 \put{$\bullet$} at 3 -1
\put{$\bullet$} at 4 -4 \put{$\bullet$} at 4 -1
\put{$\bullet$} at 5 -4 \put{$\bullet$} at 5 -1
\put{$\bullet$} at 6 -4 \put{$\bullet$} at 6 -1
%\put{$\bullet$} at 7 -1 \put{$\bullet$} at 7 2
\setlinear
\plot 1 2 1 -1 /
\plot 6 2 6 -1 /
\setquadratic
\plot 3 5 3.5 4.25 4 5 /
\plot 3 2 3.5 2.75 4 2 /
\plot 2 2 2.5 1.25 3 2 /
\plot 2 -1 2.5 -0.25 3 -1 /
\plot 4 2 4.5 1.25 5 2 /
\plot 4 -1 4.5 -0.25 5 -1 /
\plot 1 -1 1.5 -1.75 2 -1 /
\plot 1 -4 1.5 -3.25 2 -4 /
% single crossing
\plot  1 2  1.05 2.6  1.4 3.3 /
\plot  1.6 3.7  1.95 4.4  2 5 /
\plot 1 5  1.05 4.4  1.5 3.5  1.95 2.6  2 2 /
% single crossing
\plot  5 2  5.05 2.6  5.4 3.3 /
\plot  5.6 3.7  5.95 4.4  6 5 /
\plot 5 5  5.05 4.4  5.5 3.5  5.95 2.6  6 2 /
% single crossing
\plot  5 -4  5.05 -3.4  5.4 -2.7 /
\plot  5.6 -2.3  5.95 -1.6  6 -1 /
\plot 5 -1  5.05 -1.6  5.5 -2.5  5.95 -3.4  6 -4 /
% single crossing
\plot  4 -4  3.95 -3.4  3.6 -2.7 /
\plot  3.4 -2.3  3.05 -1.6  3 -1 /
\plot 4 -1  3.95 -1.6  3.5 -2.5  3.05 -3.4  3 -4 /
\endpicture
$$
so that $T_d = T_1E_3T_5E_2E_4E_1T_3^{-1}T_5$ and
$T_d^{5,3,-2,0,3,0} = Y_1^5 Y_2^3 Y_3^{-2}T_d Y_1^3$.
Then, since $(1,6)$ is a horizontal edge in $d$, \eqref{Horiz} is illustrated by the computation
\begin{align}
Y_6T_d 
&= T_1E_3Y_6T_5E_2E_4E_1T_3^{-1}T_5
= T_1E_3(T_5Y_5+(q-q^{-1})Y_6(1-E_5))E_2E_4E_1T_3^{-1}T_5 
\nonumber \\
&= T_1E_3T_5E_2Y_5E_4E_1T_3^{-1}T_5 + \cdots 
= T_1E_3T_5E_2Y_4^{-1}E_4E_1T_3^{-1}T_5 + \cdots 
\nonumber \\
&= T_1E_3Y_4^{-1}T_5E_2E_4E_1T_3^{-1}T_5 + \cdots 
= T_1E_3Y_3T_5E_2E_4E_1T_3^{-1}T_5 + \cdots 
\label{basisexample} \\
&= T_1E_3T_5Y_3E_2E_4E_1T_3^{-1}T_5 + \cdots 
= T_1E_3T_5Y_2^{-1}E_2E_4E_1T_3^{-1}T_5 + \cdots 
\nonumber \\
&= T_1Y_2^{-1}E_3T_5E_2E_4E_1T_3^{-1}T_5 + \cdots 
= Y_1^{-1}T_1E_3T_5E_2E_4E_1T_3^{-1}T_5 + \cdots, 
\nonumber
\end{align}
where $+\cdots$ is always a linear combination of terms with fewer crossings. 

%\begin{remark} REMOVE THIS REMARK???
%If 
%$$
%\tilde Y_i = T_{i-1}\cdots T_2T_1 Y_1 T_1^{-1}\cdots T_{i-1}^{-1}
%\qquad\hbox{then}\qquad
%\tilde Y_i = Y_i + (\hbox{terms with fewer crossings})$$
%and $\tilde Y_i$ is conjugate to $Y_1$.   These elements play an important role in the
%analysis in \cite[\S 2.4]{GH2}.
%\end{remark}

%%%%%%%%%%%%%%%%%%%%%%
%%%% SUBSECTION: AFF CENTER %%%
%%%%%%%%%%%%%%%%%%%%%%
\subsection{The center of $W_k$}

The affine BMW algebra is the algebra $W_k$ over $C$ defined in Section \ref{sec:W_k-defn} and the ring of Laurent polynomials 
$C[Y_1^{\pm1},\ldots, Y_k^{\pm1}]$ is a subalgebra of $W_k$ (see Remark \ref{Polysubalg}). 
The symmetric group $S_k$ acts on $C[Y_1^{\pm1},\ldots, Y_k^{\pm1}]$ by permuting the variables and the ring of symmetric functions is
$$C[Y_1^{\pm1},\ldots,Y_k^{\pm1}]^{S_k}
=\{ f\in C[Y_1^{\pm1},\ldots, Y_k^{\pm1}]\ |\ \hbox{$wf=f$, for $w\in S_k$}\}.
$$
A classical fact (see, for example, \cite[Proposition 2.1]{GV}) 
is that the center of the affine Hecke algebra $H_k$ is 
$$Z(H_k) = C[Y_1^{\pm1},\ldots,Y_k^{\pm1}]^{S_k}.$$
Theorem \ref{thm:BWMcenter}  is a characterization of the center of the
affine BMW algebra.

%%%%%% THM: AFFINE CENTER 1 %%%%%%%%%%
\begin{thm}\label{thm:BWMcenter} The center of the affine BMW algebra $W_k$ is 
$$R_k =  \{ f \in C[Y_1^{\pm 1},\ldots, Y_k^{\pm 1}]^{S_k}\ |\ 
f(Y_1,Y_1^{-1}, Y_3, \ldots, Y_k) =  f(1,1, Y_3,  \ldots, Y_k) \}.$$
\end{thm}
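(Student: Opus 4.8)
The plan is to run the two-step argument of Theorem~\ref{degBMWcenter} almost verbatim, replacing $y_i$, $t_{s_i}$ and the specialization $y_2\mapsto -y_1$ by $Y_i$, $T_i$ and $Y_2\mapsto Y_1^{-1}$ (with the additive identity $0$ replaced by the multiplicative identity $1$). Since $E_i=1-(T_i-T_i^{-1})/(q-q^{-1})$ by \eqref{Skein}, the algebra $W_k$ is generated by $Y_1^{\pm1},\dots,Y_k^{\pm1}$ and $T_1,\dots,T_{k-1}$, so $f\in W_k$ is central if and only if it commutes with every $Y_i$ and every $T_i$. It therefore suffices to identify, in Step~1, the centralizer of $C[Y_1^{\pm1},\dots,Y_k^{\pm1}]$ inside $W_k$, and then, in Step~2, to determine which Laurent polynomials commute with all the $T_i$.

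For Step~1 I would show that $f\in W_k$ commutes with all $Y_i$ exactly when $f\in C[Y_1^{\pm1},\dots,Y_k^{\pm1}]$. One inclusion is immediate. For the other, expand $f$ in the basis of Theorem~\ref{AffineBMWbasis}, take a diagram $d$ of maximal crossing number occurring with nonzero coefficient, and suppose $d$ has an edge $(i,j)$ with $j\ne i'$. The leading-term relations \eqref{vert} and \eqref{Horiz} then show that the coefficient of the maximal-crossing contribution $Y_iT_d^{n_1,\dots,n_k}$ in $Y_if$ is nonzero while its coefficient in $fY_i$ is zero (here $fY_i$ produces $T_dY_i$ rather than $Y_iT_d$), contradicting $Y_if=fY_i$ by linear independence of the basis. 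Hence $d$ has only vertical edges, $d=1$, and $f$ is a Laurent polynomial; this is exactly the argument of Step~1 of Theorem~\ref{degBMWcenter}, with \eqref{vert}, \eqref{Horiz} in place of \eqref{yd-vert-terms}, \eqref{yd-horiz-terms}.

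For Step~2, write $f=\sum_{a,b\in\ZZ}Y_1^aY_2^bf_{a,b}$ with $f_{a,b}\in C[Y_3^{\pm1},\dots,Y_k^{\pm1}]$. Assembling the relations \eqref{eq:TYi}, \eqref{eq:TYip1}, \eqref{eq:TYineg}, \eqref{eq:TYip1neg} according to the signs of $a$ and $b$, I would establish the affine analogue of \eqref{ts1pastf},
$$T_1f=(s_1f)T_1+(q-q^{-1})\,\Delta_1(f)+\Big(\text{an }E_1\text{-term}\Big),$$
where $\Delta_1(f)$ is a divided-difference expression proportional to $f-s_1f$ (the Lusztig--Bernstein correction of the affine Hecke algebra) and the $E_1$-term is a sum of elements $Y_1^pE_1Y_1^q$, obtained after using $E_1Y_2=E_1Y_1^{-1}$ from \eqref{rel:Unwrapping}. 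By linear independence in the basis of Theorem~\ref{AffineBMWbasis}, $T_1f=fT_1$ holds precisely when $s_1f=f$ (killing the first two summands) and the $E_1$-term vanishes. Tracking coefficients, the $E_1$-term is controlled by the sums $\sum_{a-b=\ell}f_{a,b}$; since $f(Y_1,Y_1^{-1},Y_3,\dots,Y_k)=\sum_\ell Y_1^\ell\big(\sum_{a-b=\ell}f_{a,b}\big)$ and $f(1,1,Y_3,\dots,Y_k)=\sum_{a,b}f_{a,b}$, these vanish for all $\ell\ne0$ exactly when $f(Y_1,Y_1^{-1},Y_3,\dots,Y_k)=f(1,1,Y_3,\dots,Y_k)$. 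Running the same computation for each $T_i$, and using the symmetry of $f$ to transport the cancellation from the pair $(Y_1,Y_2)$ to any adjacent pair, one finds that $f$ commutes with all $T_i$ if and only if $f\in R_k$.

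Combining the two steps gives $Z(W_k)=R_k$. The one genuinely computational point, and the main obstacle, is the second step: because $a,b$ now range over all of $\ZZ$, the formula for $T_1Y_1^aY_2^b$ must be stitched together from the four relations for positive and negative exponents, and one must verify that after normalizing via $E_1Y_2=E_1Y_1^{-1}$ the accumulated $E_1$-coefficient is precisely a Laurent-polynomial multiple of $\sum_{a-b=\ell}f_{a,b}$. Establishing this explicit affine counterpart of \eqref{ts1pastf} is the heart of the proof; the remainder is formal and exactly parallels Theorem~\ref{degBMWcenter}.
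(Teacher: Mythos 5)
Your proposal follows the paper's proof essentially verbatim: the same two-step reduction (centralizer of the $Y_i$'s via the maximal-crossing argument with \eqref{vert} and \eqref{Horiz}, then commutation with the $T_i$'s via an explicit formula for $T_1Y_1^aY_2^b$), and the affine analogue of \eqref{ts1pastf} that you identify as the computational heart is exactly the identity \eqref{T1pastf} the paper establishes, with your $E_1$-term coefficient $\sum_{a-b=\ell}f_{a,b}$ matching the paper's $\sum_{b}f_{\ell+b,b}$. The approach and all key steps are correct and coincide with the paper's.
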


\begin{proof}
\emph{Step 1: $f\in W_k$ commutes with all $Y_i$ $\Leftrightarrow$ $f \in C[Y_1^{\pm1}, \dots, Y_k^{\pm1}]$}:\\
Assume $f \in W_k$ and write
$$f = \sum c_d^{n_1,\ldots, n_k} T_d^{n_1,\ldots, n_k},$$
in terms of the basis in Theorem \ref{AffineBMWbasis}. 
Let $d\in D_k$ with the maximal number of crossings
such that $c_d^{n_1,\ldots, n_k}\ne 0$ and, using the notation before \eqref{Brauerbasis},
suppose there is an edge $(i,j)$ of $d$ such that $j\ne i'$.  Then,
by \eqref{vert} and \eqref{Horiz},  
$$\hbox{the coefficient of}\quad Y_iT_d^{n_1,\ldots, n_k}\quad\hbox{in}\quad
Y_i f \quad\hbox{is}\quad c_d^{n_1,\ldots, n_k}$$
and
$$\hbox{the coefficient of}\quad Y_iT_d^{n_1,\ldots, n_k}\quad\hbox{in}\quad
f Y_i \quad\hbox{is}\quad 0.$$
If $Y_i f= f Y_i$ it follows that there is no such edge, and so $d=1$ (and therefore $T_d=1$). Thus $f \in C[Y_1^{\pm1},\ldots, Y_k^{\pm1}]$. Conversely, if $f \in C[Y_1^{\pm1},\ldots, Y_k^{\pm1}]$, then $Y_i f= f Y_i$. 

\noindent 
\emph{Step 2: $f\in C[Y_1^{\pm1},\ldots, Y_k^{\pm1}]$ commutes with all 
$T_i$ $\Leftrightarrow$ $f \in R_k$}:\\
Assume $f \in C[Y_1^{\pm1}, \dots, Y_k^{\pm1}]$ and write 
\begin{equation*}
f = \sum_{a,b \in \ZZ } Y_1^a Y_2^b f_{a,b}, \qquad \text{ where } f_{a,b} \in C[Y_3^{\pm1}, \dots, Y_k^{\pm1}].
\end{equation*}
Then $\displaystyle{
 f(1, 1, Y_3, \dots, Y_k) = \sum_{a,b \in \ZZ} f_{a,b}
}$ and
\begin{equation}\label{YYinv}
f(Y_1, Y_1^{-1}, Y_3, \dots, Y_k) = \sum_{a,b \in \ZZ} Y_1^{a-b} f_{a,b}
= \sum_{\ell \in \ZZ} Y_1^\ell \left( \sum_{b\in \ZZ} f_{\ell+b, b}\right).
\end{equation}
By direct computation using \eqref{eq:TYip1} and \eqref{eq:TYip1neg},
\begin{align*}
T_1 Y_1^a Y_2^b 
&= Y_1^a Y_2^a T_1 Y_2^{b-a}
= s_1(Y_1^aY_2^b)  T_1+ (q-q^{-1})\frac{Y_1^aY_2^b - s_1(Y_1^aY_2^{b})}{1-Y_1Y_2^{-1}} 
+ \cE_{b-a},
\end{align*}
where
$$
\cE_\ell = \begin{cases} 
\displaystyle{-(q-q^{-1}) \sum_{r= 1}^{\ell}  Y_1^{\ell - r}E_1Y_1^{-r},}& \hbox{if $\ell>0$,}\\
\displaystyle{(q-q^{-1}) \sum_{r= 1}^{-\ell}  Y_1^{\ell+r-1}E_1Y_1^{r-1},
} &\hbox{if $\ell<0$,}\\
0, &\hbox{if $\ell = 0$.}
\end{cases}
$$ 
It follows that
\begin{equation}
\label{T1pastf}
T_1 f =  (s_1f) T_1 + (q-q^{-1}) \frac{f - s_1 f }{1 - Y_1Y_2^{-1}} 
+ \sum_{\ell \in \ZZ_{\neq 0}} \cE_\ell  \left(\sum_{b\in\ZZ}f_{\ell+b,b}\right).
\end{equation}
Thus, if $f(Y_1, Y_1^{-1}, Y_3, \dots, Y_k) = f(1, 1, Y_3, \dots, Y_k)$ then, by
\eqref{YYinv},
\begin{equation}\label{E-sym-cancellation}
 \sum_{b \in \ZZ} f_{\ell+b, b} = 0, \qquad \hbox{for $\ell \neq 0$.}
 \end{equation}
Hence, if $f\in C[Y_1^{\pm1}, \dots, Y_k^{\pm1}]^{S_k}$ 
and $f(Y_1, Y_1^{-1}, Y_3, \dots, Y_k) = f(1, 1, Y_3, \dots, Y_k)$ then
$s_1f=f$ and \eqref{E-sym-cancellation} holds so that, by \eqref{T1pastf},
$T_1f=fT_1$.  Similarly, $f$ commutes with all $T_i$.

\noindent
Conversely, if $f\in C[Y_1^{\pm1}, \ldots, Y_k^{\pm1}]$ and $T_if=fT_i$ then
$$s_if=f
\qquad\hbox{and}\qquad
 \sum_{b \in \ZZ} f_{\ell+b, b} = 0, \qquad \hbox{for $\ell \neq 0$,}
$$
so that $f \in C[Y_1^{\pm 1},\ldots, Y_k^{\pm 1}]^{S_k}$ and
$f(Y_1,Y_1^{-1}, Y_3, \ldots, Y_k) =  f(1,1, Y_3,  \ldots, Y_k)$.

It follows from \eqref{rel:E_Defn} that $R_k = Z(W_k)$. 
\end{proof}

The symmetric group $S_k$ acts on $\ZZ^k$ by permuting the factors.  The ring 
$$C[Y_1^{\pm1},\ldots, Y_k^{\pm1}]^{S_k}
\qquad\hbox{has basis}\qquad
\{ m_\lambda\ |\ \hbox{$\lambda\in \ZZ^k$ 
with $\lambda_1\ge \lambda_2\ge \cdots \ge \lambda_k$}\},$$
where
$$m_\lambda = \sum_{\mu\in S_k\lambda} Y_1^{\mu_1}\cdots Y_k^{\mu_k}.
$$
The \emph{elementary symmetric functions}
are
$$e_r = m_{(1^r, 0^{k-r})}
\quad\hbox{and}\quad
e_{-r} = m_{(0^{k-r}, (-1)^r)},\qquad\hbox{for $r=0,1,\ldots, k$},$$
and the \emph{power sum symmetric functions} are  
$$p_r = m_{(r, 0^{k-1})}
\quad\hbox{and}\quad
p_{-r} = m_{(0^{k-1}, -r)},\qquad\hbox{for $r\in \ZZ_{>0}$.}$$
The Newton identities (see \cite[Ch.\ I ($2.11'$)]{Mac})  say
\begin{equation}\label{newtonids}
\ell e_\ell = \sum_{r=1}^\ell (-1)^{r-1} p_r e_{\ell-r}
\qquad\hbox{and}\qquad
\ell e_{-\ell} = \sum_{r=1}^{\ell} (-1)^{r-1} p_{-r} e_{-(\ell-r)},
\end{equation}
where the second equation is obtained from the first by replacing $Y_i$ with $Y_i^{-1}$.
For $\ell\in \ZZ$ and $\lambda = (\lambda_1,\ldots, \lambda_k)\in \ZZ^k$,   
$$e_k^\ell m_\lambda = m_{\lambda+(\ell^k)},
\qquad\hbox{where}\quad
\lambda+(\ell^k) = (\lambda_1+\ell,\ldots, \lambda_k+\ell).
$$
In particular,
\begin{equation}\label{varswitches}
e_{-r} = e_k^{-1}e_{k-r},\qquad\hbox{for $r=0,\ldots, k$.}
\end{equation}
Define
\begin{equation}
p_i^+ = p_i + p_{-i}
\qquad\hbox{and}\qquad
p_i^- = p_i - p_{-i},
\quad\hbox{for $i\in \ZZ_{>0}$.}
\end{equation}
The consequence of \eqref{varswitches} and \eqref{newtonids} is that 
\begin{align*}
\CC[Y_1^{\pm 1}, \dots, Y_k^{\pm 1}]^{S_k}
&= \CC[e_k^{\pm1}, e_1,\dots , e_{k-1}] \\
&= \CC[e_k^{\pm1}][e_1,e_2,\ldots, e_{\lfloor \frac{k}{2}\rfloor},
e_ke_{-\lfloor \frac{k-1}{2}\rfloor}, \dots , e_ke_{-2}, e_ke_{-1}] \\
&= \CC[e_k^{\pm1}][e_1,e_2,\ldots, e_{\lfloor \frac{k}{2}\rfloor},
e_{-\lfloor \frac{k-1}{2}\rfloor}, \dots , e_{-2}, e_{-1}] \\
&= \CC[e_k^{\pm1}][p_1,p_2,\ldots, p_{\lfloor \frac{k}{2} \rfloor},
p_{-\lfloor \frac{k-1}{2}\rfloor}, \dots , p_{-2}, p_{-1}] \\
&= \CC[e_k^{\pm1}][p^+_1,p^+_2,\ldots, p^+_{\lfloor \frac{k}{2}\rfloor},
p^-_{\lfloor \frac{k-1}{2}\rfloor}, \dots , p^-_{2}, p^-_{1}].
\end{align*}
For $\nu\in \ZZ^k$ with $\nu_1\ge \cdots \ge \nu_\ell >0$ define
$$p^+_\nu = p^+_{\nu_1}\cdots p^+_{\nu_\ell}
\qquad\hbox{and}\qquad
p^-_\nu = p^-_{\nu_1}\cdots p^-_{\nu_\ell}.
$$
Then
\begin{equation}
\CC[Y_1^{\pm 1}, \dots, Y_k^{\pm 1}]^{S_k}
\qquad\hbox{has basis}\qquad
\{ e_k^\ell p^+_\lambda p^-_\mu\ |\ \ell\in \ZZ, \ell(\lambda)\le \hbox{$\lfloor \frac{k}{2}\rfloor$},
\ell(\mu) \le \hbox{$\lfloor \frac{k-1}{2}\rfloor$}\}.
\end{equation}
In analogy with \eqref{oddpower} we expect that if $R_k$ is as in Theorem 
\ref{thm:BWMcenter} then
\begin{equation}
R_k = C[e_k^{\pm1}][p_1^-,p_2^-,\ldots].
\end{equation}

\begin{remark}  The left ideal of $W_2$ generated by $E_1$ is 
$C[Y_1^{\pm1}]E_1$. This is an infinite dimensional (generically irreducible) $W_2$-module on which $Z(W_2)$ acts by constants. It follows that
$W_2$ is not a finitely generated $Z(W_2)$-module.
%  Is $W_2$ finitely generated over its center in the case when $Z_0(u)$ is rational? }
\end{remark}

\end{document}